\theoremstyle{definition}
\newtheorem{theorem}{Theorem}[section]
\newtheorem{proposition}[theorem]{Proposition}
\newtheorem{lemma}[theorem]{Lemma}
\newtheorem{corollary}[theorem]{Corollary}
\newtheorem{remark}[theorem]{Remark}
\newtheorem*{remark*}{Remark}
\DeclareMathOperator{\re}{Re}
\DeclareMathOperator{\im}{Im}
\DeclareMathOperator{\loc}{loc}
\DeclareMathOperator{\app}{app}
\DeclareMathOperator{\ModOp}{Mod_{op}}
\title[Minimal-mass blow-up solutions for inhomogeneous NLS with growth potentials]{Minimal-mass blow-up solutions for inhomogeneous nonlinear Schr\"{o}dinger equations with growth potentials}
\author[N. Matsui]{Naoki Matsui}
\date{\today}
\address[N. Mastui]{Department of Mathematics\\ Tokyo University of Science\\ 1-3 Kagurazaka, Shinjuku-ku, Tokyo 162-8601, Japan}
\email[N. Matsui]{1120703@ed.tus.ac.jp}
\keywords{blow-up rate, critical exponent, critical mass, growth potential, inhomogeneous, minimal-mass blow-up, nonlinear Schr\"{o}dinger equation.}
\subjclass[2010]{35Q55}
\providecommand{\keywords}[1]
{
  \small	
  \textbf{\textit{Keywords---}} #1
}
\begin{document}
\maketitle

\begin{abstract}
In this paper, we consider the following equation:
\[
i\frac{\partial u}{\partial t}+\Delta u+g(x)|u|^{\frac{4}{N}}u-Wu=0.
\]
We construct a critical-mass solution that blows up at a finite time and describe the behaviour of the solution in the neighbourhood of the blow-up time. Banica-Carles-Duyckaertz (2011) has shown the existence of a critical-mass blow-up solution under the assumptions that $N\leq 2$, that $g$ and $W$ are sufficiently smooth and that each derivative of these is bounded. In this paper, we show the existence of a critical-mass blow-up solution under weaker assumptions regarding smoothness and boundedness of $g$ and $W$. In particular, it includes the cases where $W$ is growth at spatial infinity or not Lipschitz continuous.
\end{abstract}

\section{Introduction}
We consider the following nonlinear Schr\"{o}dinger equation with potentials:
\begin{align}
\label{NLS}
\tag{NLS}
\left\{\begin{array}{l}
i\frac{\partial u}{\partial t}+\Delta u+g(x)|u|^{\frac{4}{N}}u-Wu=0,\\
u(t_0)=u_0
\end{array}\right.
\end{align}
in $\mathbb{R}^N$, where $g\in L^\infty(\mathbb{R}^N)$ and $W$ is the sum of potentials satisfying one of the following conditions:
\begin{align}
\label{W1}
\tag{W1}
W\in C^\infty(\mathbb{R}^N),\quad W\geq 0,\quad \left(\frac{\partial}{\partial x}\right)^\alpha W\in L^\infty(\mathbb{R}^N)\quad\left(|\alpha|\geq 2\right),\\
\label{pote-1}
W\in L^p(\mathbb{R}^N)+L^\infty(\mathbb{R}^N)\quad\left(p\geq 1\text{ and }p>\frac{N}{2}\right).
\end{align}

We define Hilbert spaces $\Sigma^k$ by
\begin{align*}
\Sigma^k:=\left\{u\in H^k(\mathbb{R}^N)\middle||x|^ku\in L^2(\mathbb{R}^N)\right\},\quad \|u\|_{\Sigma^k}^2:=\|u\|_{H^k}^2+\||\cdot|^k u\|_2^2.
\end{align*}

It is well known that \eqref{NLS} is locally well-posed in $\Sigma^1$ (see, e.g., \cite{CSSE,CHSEE}). This means that for any $u_0\in \Sigma^1$, there exists a unique maximal solution $u\in  C((T_*,T^*),\Sigma^1)\cap C^1((T_*,T^*),\Sigma^{-1})$. Moreover, the mass (i.e., $L^2$-norm) and energy $E$ of the solution are conserved by the flow, where 
\[
E(u):=\frac{1}{2}\left\|\nabla u\right\|_2^2-\frac{1}{2+\frac{4}{N}}\int_{\mathbb{R}^N}g(x)|u(x)|^{2+\frac{4}{N}}dx+\frac{1}{2}\int_{\mathbb{R}^N}W(x)|u(x)|^2dx.
\]
Furthermore, there is a blow-up alternative
\[
T^*<\infty\ \text{implies}\ \lim_{t\nearrow T^*}\left\|u(t)\right\|_{\Sigma^1}=\infty.
\]

Moreover, we consider the following condition instead of \eqref{pote-1}:
\begin{align}
\label{pote-2'}
W\in L^{p}(\mathbb{R}^N)+L^{\infty}(\mathbb{R}^N)\quad \left(p\geq 2\text{ and }p>\frac{N}{2}\right).
\end{align}
Under this condition, if $u_0\in \Sigma^2$, then the corresponding solution $u$ belongs to $u\in C((T_*,T^*),\Sigma^2)\cap C^1((T_*,T^*),L^2(\mathbb{R}^N))$.

In this paper, we investigate the conditions for the inhomogeneity and the potential related with the existence of minimal-mass blow-up solution.

\subsection{Critical problem}
Firstly, we describe the results regarding the mass-critical problem:
\begin{align}
\label{CNLS}
\tag{CNLS}
i\frac{\partial u}{\partial t}+\Delta u+|u|^{\frac{4}{N}}u=0,\quad (t,x)\in\mathbb{R}\times\mathbb{R}^N.
\end{align}

It is well known (\cite{BLGS,KGS,WGS}) that there exists a unique classical solution $Q$ for
\[
-\Delta Q+Q-\left|Q\right|^{\frac{4}{N}}Q=0,\quad Q\in H^1(\mathbb{R}^N),\quad Q>0,\quad Q\text{\ is\ radial},
\]
which is called the ground state. If $\|u\|_2=\|Q\|_2$ ($\|u\|_2<\|Q\|_2$, $\|u\|_2>\|Q\|_2$), we say that $u$ has the \textit{critical mass} (\textit{subcritical mass}, \textit{supercritical mass}, respectively).

We note that $E_{\text{crit}}(Q)=0$, where $E_{\text{crit}}$ is the energy with respect to \eqref{CNLS}. Moreover, the ground state $Q$ attains the best constant in the Gagliardo-Nirenberg inequality
\[
\left\|v\right\|_{2+\frac{4}{N}}^{2+\frac{4}{N}}\leq\left(1+\frac{2}{N}\right)\left(\frac{\left\|v\right\|_2}{\left\|Q\right\|_2}\right)^{\frac{4}{N}}\left\|\nabla v\right\|_2^2\quad\text{for }v\in H^1(\mathbb{R}^N).
\]
Therefore, for all $v\in H^1(\mathbb{R}^N)$,
\[
E_{\text{crit}}(v)\geq \frac{1}{2}\left\|\nabla v\right\|_2^2\left(1-\left(\frac{\left\|v\right\|_2}{\left\|Q\right\|_2}\right)^{\frac{4}{N}}\right)
\]
holds. This inequality and the mass and energy conservations imply that any subcritical-mass solution for \eqref{CNLS} is global and bounded in $H^1(\mathbb{R}^N)$.

Regarding the critical mass case, we apply the pseudo-conformal transformation
\[
u(t,x)\ \mapsto\ \frac{1}{\left|t\right|^\frac{N}{2}}u\left(-\frac{1}{t},\pm\frac{x}{t}\right)e^{i\frac{\left|x\right|^2}{4t}}
\]
to the solitary wave solution $u(t,x):=Q(x)e^{it}$. Then we obtain
\[
S(t,x):=\frac{1}{\left|t\right|^\frac{N}{2}}Q\left(\frac{x}{t}\right)e^{-\frac{i}{t}}e^{i\frac{\left|x\right|^2}{4t}},
\]
which is also a solution for \eqref{CNLS} and satisfies
\[
\left\|S(t)\right\|_2=\left\|Q\right\|_2,\quad \left\|\nabla S(t)\right\|_2\sim\frac{1}{\left|t\right|}\quad (t\nearrow 0).
\]
Namely, $S$ is a minimal-mass blow-up solution for \eqref{CNLS}. Moreover, $S$ is the only finite time blow-up solution for \eqref{CNLS} with critical mass, up to the symmetries of the flow (see \cite{MMMB}).

Regarding the supercritical mass case, there exists a solution $u$ for \eqref{CNLS} such that
\[
\left\|\nabla u(t)\right\|_2\sim\sqrt{\frac{\log\bigl|\log\left|T^*-t\right|\bigr|}{T^*-t}}\quad (t\nearrow T^*)
\]
(see \cite{MRUPB,MRUDB}).

\subsection{Previous results}
We describe previous results regarding  the following nonlinear Schr\"{o}dinger equation with a real-valued potential:
\begin{align}
\label{PNLS}
\tag{PNLS}
i\frac{\partial u}{\partial t}+\Delta u+|u|^{\frac{4}{N}}u-W(x)u=0,\quad (t,x)\in\mathbb{R}\times\mathbb{R}^N.
\end{align}

At first, \cite{C,CN} give results for unbounded potentials.

\begin{theorem}[Carles and Nakamura \cite{CN}]
\label{theorem:CN}
If $V(x)=E\cdot x$ for some $E\in\mathbb{R}^N$, then \eqref{PNLS} has a finite-time blow-up solution
\[
S(t,x):=\frac{1}{|t|^\frac{N}{2}}Q\left(\frac{x-t^2E}{t}\right)\exp\left({i\left(\frac{|x-t^2E|^2}{4t}-\frac{1}{t}+tE\cdot x-\frac{t^3}{3}|E|^2\right)}\right).
\]
In particular, $\|S\|_2=\|Q\|_2$.
\end{theorem}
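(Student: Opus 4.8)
The plan is to recognise $S$ as the image of the explicit minimal-mass blow-up solution of \eqref{CNLS} under an exact gauge transformation encoding the classical motion of a particle in the linear potential (an Avron--Herbst type transformation). Concretely, write $S_{0}(t,x):=|t|^{-N/2}Q(x/t)e^{-i/t}e^{i|x|^2/(4t)}$ for the pseudo-conformal solution of \eqref{CNLS} described above, and seek a map of the form
\[
u(t,x)=w\bigl(t,x-c(t)\bigr)\,e^{i\psi(t,x)}
\]
carrying a solution $w$ of \eqref{CNLS} to a solution $u$ of \eqref{PNLS} with $W=E\cdot x$. Substituting this ansatz into \eqref{PNLS} and using that $w$ solves \eqref{CNLS}, one collects the resulting terms by differential order in $w$.

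The coefficient of $\nabla w$ forces the transport relation $\nabla\psi=\tfrac12\dot c$, so $\psi$ is affine in $x$, $\psi(t,x)=\tfrac12\dot c(t)\cdot x+h(t)$, whence $\Delta\psi\equiv0$ — the feature special to linear potentials. The coefficient of $w$ then splits: its part linear in $x$ gives Newton's law $\ddot c=\mp2E$, hence $c(t)=\mp t^{2}E$ and $\tfrac12\dot c=\mp tE$; its $x$-independent part gives $\dot h=-\tfrac14|\dot c|^{2}$, hence $h(t)=-\tfrac{t^{3}}{3}|E|^{2}$. With these choices every extraneous term cancels and the equation for $w$ collapses to exactly \eqref{CNLS}. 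Taking $w=S_{0}$ and combining the phases then reproduces the displayed formula for $S$, the sign branch being fixed by the sign convention adopted for the potential.

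It remains to read off the two asserted properties. The transformation is a spatial translation composed with multiplication by a unimodular factor, hence an isometry of $L^{2}(\mathbb{R}^N)$; since $\|S_{0}(t)\|_{2}=\|Q\|_{2}$ for every $t$, we obtain $\|S(t)\|_{2}=\|Q\|_{2}$, and in particular $\|S\|_{2}=\|Q\|_{2}$. For the blow-up, the translation $x\mapsto x-c(t)$ and the smooth phase do not affect the leading-order concentration, so $\|\nabla S(t)\|_{2}\sim\|\nabla S_{0}(t)\|_{2}\sim|t|^{-1}\to\infty$ as $t\nearrow0$; thus $S$ is a finite-time blow-up solution at $t=0$.

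The main obstacle is the exact algebraic cancellation of the potential: one must verify that the Hamilton--Jacobi/eikonal identity $\partial_{t}\psi+|\nabla\psi|^{2}+E\cdot x=0$ holds together with $\Delta\psi\equiv0$, so that no residual zeroth-order term survives to spoil \eqref{CNLS} for $w$. This is precisely where linearity of the potential is essential — for a genuinely nonlinear $W$ the classical action is no longer affine in $x$, $\Delta\psi$ no longer vanishes, and only an approximate cancellation is available, which is the source of the analysis carried out in the remainder of the paper. A secondary but routine nuisance is sign and convention tracking through the $|t|$ versus $t$ factors and the pseudo-conformal inversion, which fixes the signs in $c(t)=\mp t^{2}E$ and in the linear phase $\mp tE\cdot x$.
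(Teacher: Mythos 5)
Your proposal is correct and is essentially the construction the paper attributes to this quoted result from Carles--Nakamura \cite{CN} (the paper itself gives no proof, remarking only that $S$ comes from applying the pseudo-conformal transformation to the ground state): you compose the explicit pseudo-conformal solution $S_0$ of \eqref{CNLS} with the Avron--Herbst gauge transformation, and the verification via the transport relation $\nabla\psi=\tfrac12\dot c$, Newton's law for $c$, and the eikonal identity is exactly right. The one point worth noting is the sign convention: with $W(x)=E\cdot x$ taken literally in \eqref{PNLS}, the computation yields $c(t)=-t^{2}E$ and phase $-tE\cdot x-\tfrac{t^{3}}{3}|E|^{2}$, i.e.\ the displayed formula with $E$ replaced by $-E$; since $E\in\mathbb{R}^N$ is arbitrary this is harmless, and your $\mp$ bookkeeping handles it correctly.
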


\begin{theorem}[Carles \cite{C}]
\label{theorem:C}
If $W(x)=-\omega^2|x|^2$ for some $\omega\in\mathbb{R}^N$, then \eqref{PNLS} has a finite-time blow-up solution
\begin{align*}
S(t,x)&:=\left(\frac{2\omega}{\sinh \left(2\omega t\right)}\right)^\frac{N}{2}Q\left(\frac{2\omega x}{\sinh \left(2\omega t\right)}\right)\nonumber\\
&\hspace{20pt}\times\exp\left({i\left(\frac{\omega|x|^2}{2\sinh \left(2\omega t\right)\cosh \left(2\omega t\right)}-\frac{2\omega}{\tanh \left(2\omega t\right)}+\frac{\omega}{2}|x|^2\tanh \left(2\omega t\right)\right)}\right).
\end{align*}
In particular, $\|S\|_2=\|Q\|_2$.
\end{theorem}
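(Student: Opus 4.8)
The plan is to prove the theorem by exhibiting a generalized pseudo-conformal (lens) transform that intertwines solutions of the mass-critical equation \eqref{CNLS} with solutions of \eqref{PNLS} for the inverted harmonic potential $W(x)=-\omega^2|x|^2$, and then applying it to the solitary wave $v(s,y)=Q(y)e^{is}$, which is a solution of \eqref{CNLS}. The point is that the repulsive quadratic potential can be absorbed into a time-dependent rescaling and a quadratic phase, exactly as the free pseudo-conformal transform produces the explicit blow-up solution $S_0(t,x)=|t|^{-N/2}Q(x/t)e^{-i/t}e^{i|x|^2/(4t)}$ recalled above for \eqref{CNLS}.

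Concretely, I would make the ansatz
\[
u(t,x)=b(t)^{-N/2}\,v\!\left(s(t),\frac{x}{b(t)}\right)e^{\,i a(t)|x|^2},
\]
and compute $i\partial_t u+\Delta u+|u|^{4/N}u-Wu$ by the chain rule, collecting the resulting terms according to their structure in the new variable $y=x/b(t)$. Three algebraic conditions then decouple the problem: the first-order transport terms $y\cdot\nabla_y v$ cancel iff $a=\dot b/(4b)$; the terms proportional to $|x|^2 v$ cancel iff $-\dot a-4a^2+\omega^2=0$, which together with the previous relation is equivalent to $\ddot b=4\omega^2 b$; and the remaining terms reproduce $b^{-2}\big(i\partial_s v+\Delta_y v+|v|^{4/N}v\big)$ provided $\dot s=b^{-2}$. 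The crucial feature is that the nonlinearity is mass-critical: the factor $(b^{-N/2})^{4/N}=b^{-2}$ matches precisely the scaling of the Laplacian, so that $v\mapsto u$ sends solutions of \eqref{CNLS} to solutions of \eqref{PNLS}. The equation $\ddot b=4\omega^2 b$ is where the hyperbolic functions enter.

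I would then choose the solution $b(t)=\sinh(2\omega t)/(2\omega)$, so that $a(t)=\dot b/(4b)=\omega/(2\tanh(2\omega t))$ and $s(t)=\int b^{-2}\,dt=-2\omega/\tanh(2\omega t)$, and apply the transform to $v(s,y)=Q(y)e^{is}$. Reading off the three ingredients gives exactly the stated $S$, after rewriting $\omega/(2\tanh(2\omega t))=\omega/(2\sinh(2\omega t)\cosh(2\omega t))+(\omega/2)\tanh(2\omega t)$ via $1+\sinh^2=\cosh^2$. Mass conservation $\|S(t)\|_2=\|Q\|_2$ follows from the change of variables $y=x/b(t)$, which makes the transform an $L^2$-isometry; and finite-time blow-up is read off from $b(t)\to0$ as $t\to0$, giving $\|\nabla S(t)\|_2\sim1/b(t)\sim1/|t|$.

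The computation itself is routine once the ansatz is fixed, so the real content — and the only place that requires care — is identifying the correct profile equation $\ddot b=4\omega^2 b$ and verifying that the quadratic phase exactly generates the potential; equivalently, one could substitute the explicit $S$ directly into \eqref{PNLS}, but that route obscures why the hyperbolic functions appear and is more error-prone in tracking the $|x|^2$-phase. I expect the main bookkeeping obstacle to be the cancellation of the $|x|^2 v$ terms, since it mixes $\dot a$, $a^2$, and the potential, and it is precisely this relation that pins down $b$.
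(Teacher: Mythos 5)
Your proposal is correct, and it follows essentially the same route as the result it reproves: the theorem is quoted from Carles \cite{C}, and the paper itself notes that it is obtained by reducing \eqref{PNLS} to \eqref{CNLS} through a pseudo-conformal (lens) transformation applied to the solitary wave $Q(y)e^{is}$, which is precisely your construction with $a=\dot b/(4b)$, $\ddot b=4\omega^2 b$, $\dot s=b^{-2}$. Your cancellation conditions, the choice $b(t)=\sinh(2\omega t)/(2\omega)$, and the hyperbolic identity $1+\sinh^2=\cosh^2$ recovering the stated phase are all accurate, so there is no gap.
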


Theorems \ref{theorem:CN} and \ref{theorem:C} construct blow-up solutions by applying the pseudo-conformal transformation to the ground states. Therefore, if \eqref{PNLS} can be reduced to \eqref{CNLS} (e.g., when $W$ is easy to handle algebraically), then \eqref{PNLS} may have a critical-mass blow-up solution with a blow-up rate of $t^{-1}$.

Merle \cite{MNE} and Rapha\"{e}l and Szeftel \cite{RSEU} consider
\begin{align}
\label{ICNLS}
\tag{ICNLS}
i\frac{\partial u}{\partial t}+\Delta u+g(x)|u|^{\frac{4}{N}}u=0,\quad (t,x)\in\mathbb{R}\times\mathbb{R}^N.
\end{align}

Firstly, \cite{MNE} showed non-existent results:

\begin{theorem}[\cite{MNE}]
\label{theorem:MNE}
Assume the following for $g$:
\begin{align}
\label{H.1}
&g_1\leq g\leq 1\quad\text{for some }g_1>0,\\
\label{H.2}
&g\in C^1(\mathbb{R}^N)\cap W^{1,\infty}(\mathbb{R}^N),\quad x\cdot\nabla g\in L^\infty(\mathbb{R}^N),\\
\label{H.3}
&g(x_0)=1\quad\text{for some }x_0\in\mathbb{R}^N,\\
&\exists \delta_0,R_0>0\ \forall |x|>R_0,\ g(x)\leq 1-\delta_0,\nonumber\\
&g^{-1}(\{1\})\text{ is finite},\nonumber\\
\label{H.4}
&\exists \rho_0>0,\alpha_0\in(0,1)\ \forall |x-x_0|\leq\rho_0,\ (x-x_0)\cdot\nabla g(x)\leq -|x-x_0|^{1+\alpha_0}.
\end{align}
Then there is no blow-up solutions with critical mass.
\end{theorem}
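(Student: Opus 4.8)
The plan is to argue by contradiction. Suppose $u$ is a solution of \eqref{ICNLS} with $\|u\|_2=\|Q\|_2$ that blows up at a finite time $T^*$, so $\|\nabla u(t)\|_2\to\infty$ as $t\nearrow T^*$. Write $E_g(u)=\frac12\|\nabla u\|_2^2-\frac{1}{2+4/N}\int_{\mathbb{R}^N}g|u|^{2+4/N}$ for the conserved energy of \eqref{ICNLS}. Since $g\le1$ by \eqref{H.1}, the Gagliardo--Nirenberg inequality at critical mass gives $E_g(u)\ge E_{\mathrm{crit}}(u)\ge0$, so $E_g(u)\equiv E_0\ge0$. I introduce the rescaling parameter $\lambda(t):=\|\nabla Q\|_2/\|\nabla u(t)\|_2\to0$ and, choosing a concentration point $x(t)$ and phase $\gamma(t)$, set $v(t,y):=\lambda(t)^{N/2}e^{i\gamma(t)}u\big(t,\lambda(t)y+x(t)\big)$, which satisfies $\|v(t)\|_2=\|Q\|_2$ and $\|\nabla v(t)\|_2=\|\nabla Q\|_2$.

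First I would carry out the variational/concentration step. Rescaling the energy identity gives $\frac12\|\nabla Q\|_2^2-\frac{1}{2+4/N}\int g(\lambda y+x(t))|v|^{2+4/N}\,dy=E_0\lambda^2\to0$; together with the rigidity of equality in Gagliardo--Nirenberg (the ground state is, up to the symmetries of the flow, the unique optimizer) this forces $v(t)\to Q$ in $H^1$ along $t\nearrow T^*$ and the concentration point to converge, $x(t)\to x_1$, with $g(x_1)=1$. By \eqref{H.1} and the remaining parts of \eqref{H.3} (namely $g\le1-\delta_0$ outside $B_{R_0}$ and $g^{-1}(\{1\})$ finite), $x_1$ lies in a finite set of interior maxima; under the hypotheses this is a point where \eqref{H.4} holds, and after a translation I may assume $x_1=x_0$.

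The core of the argument is a localised virial identity. Set $f(t):=\int_{\mathbb{R}^N}|x-x_0|^2|u(t,x)|^2\,dx$. A direct computation from \eqref{ICNLS} yields
\[
f''(t)=16E_0-\frac{4}{2+4/N}\int_{\mathbb{R}^N}\big((x-x_0)\cdot\nabla g\big)|u|^{2+4/N}\,dx .
\]
I would split the last integral at $|x-x_0|=\rho_0$. On the far region $x\cdot\nabla g\in L^\infty$ from \eqref{H.2} bounds the integrand, while $\int_{|x-x_0|>\rho_0}|u|^{2+4/N}\to0$ by concentration, so that contribution is $o(1)$. On the near region \eqref{H.4} gives $-(x-x_0)\cdot\nabla g\ge|x-x_0|^{1+\alpha_0}\ge0$, and rescaling with $v\to Q$ shows
\[
\int_{|x-x_0|\le\rho_0}|x-x_0|^{1+\alpha_0}|u|^{2+4/N}\,dx\sim\lambda^{\alpha_0-1}\int_{\mathbb{R}^N}|y|^{1+\alpha_0}|Q|^{2+4/N}\,dy .
\]
Since $\alpha_0\in(0,1)$ and $\lambda\to0$, this blows up, so $f''(t)\ge c\,\lambda(t)^{\alpha_0-1}$ for $t$ near $T^*$, with $c>0$.

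Finally I would reach a contradiction by comparing rates. The uncertainty principle $\frac N2\|u\|_2^2\le\|\nabla u\|_2\|(x-x_0)u\|_2$ gives $f(t)\ge c_0\lambda(t)^2$, and concentration gives the matching bound $f(t)\le C_0\lambda(t)^2$, so $f\simeq\lambda^2$. Using the sharp blow-up rate $\|\nabla u(t)\|_2\simeq(T^*-t)^{-1}$ (equivalently $\lambda(t)\lesssim T^*-t$, hence $f(t)\le C(T^*-t)^2$ and $f,f'\to0$ at $T^*$), the lower bound becomes $f''(t)\ge c'(T^*-t)^{\alpha_0-1}$; integrating twice from $t$ to $T^*$ gives $f(t)\ge c''(T^*-t)^{1+\alpha_0}$. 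As $1+\alpha_0<2$, this contradicts $f(t)\le C(T^*-t)^2$ as $t\nearrow T^*$. The main obstacle is the concentration-with-sharp-rate step: upgrading the soft convergence $v\to Q$ to the rigid pseudoconformal rate $\lambda\simeq T^*-t$ together with the decay $f\to0$, and it is precisely here that the regularity hypothesis \eqref{H.2} on $g$ and the quantitative sign condition \eqref{H.4} do the essential work.
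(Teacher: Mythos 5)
Theorem \ref{theorem:MNE} is quoted from Merle \cite{MNE}; the present paper gives no proof of it, so your attempt can only be measured against Merle's original argument. Your skeleton --- variational concentration of a critical-mass blow-up solution at a point of $g^{-1}(\{1\})$, then a virial identity in which \eqref{H.4} produces a divergent positive term, then a rate comparison --- is indeed the strategy of \cite{MNE}, and your identity $f''(t)=16E_0-\frac{4}{2+4/N}\int\bigl((x-x_0)\cdot\nabla g\bigr)|u|^{2+4/N}\,dx$ is correct. The problem is that the decisive step is asserted, not proved, and it cannot be proved by the soft arguments you invoke.

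Concretely: you claim the far-region contribution is $o(1)$ because ``$\int_{|x-x_0|>\rho_0}|u|^{2+4/N}\to 0$ by concentration''. Concentration of the (critical) mass gives only $\|u\|_{L^2(|x-x_0|>\rho_0)}\to 0$; by Gagliardo--Nirenberg with a cutoff, the exterior term is then bounded by $\|u\|_{L^2(|x-x_0|>\rho_0)}^{4/N}\|u\|_{H^1}^2=o(1)\,\lambda^{-2}$, and since $\alpha_0-1>-2$ we have $\lambda^{-2}\gg\lambda^{\alpha_0-1}$, so this error term can swamp the near-region gain $c\,\lambda^{\alpha_0-1}$. To close the argument one needs a quantitative bound, e.g.\ $\|v-Q\|_{2}\ll\lambda^{N(1+\alpha_0)/4}$ or a lower bound $\lambda(t)\gtrsim T^*-t$ on the concentration scale; neither follows from the qualitative convergence $v\to Q$, and producing them (modulation theory, coercivity of $L_\pm$, exterior estimates) is precisely the body of Merle's long proof. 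The same issue affects your bound $f\simeq\lambda^2$: the inequalities $f(t)\lesssim (T^*-t)^2$, $f,f'\to 0$, and (via the uncertainty principle $\|Q\|_2^2\leq\frac{2}{N}\|\nabla u\|_2\,\|(x-x_0)u\|_2$) $\lambda\lesssim T^*-t$ can all be made rigorous by a Banica-type Cauchy--Schwarz argument, using that $g\leq 1$ forces $E_{\text{crit}}(u)\leq E_0$; but $f\lesssim\lambda^2$ needs the opposite inequality $\lambda\gtrsim T^*-t$, which is exactly the pseudo-conformal rigidity you yourself defer as ``the main obstacle''. That obstacle is not a technical refinement to be added later --- it \emph{is} the theorem, and a proof outline that leaves it open is not a proof. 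A smaller point: as stated here, \eqref{H.4} is assumed only at $x_0$, while $g^{-1}(\{1\})$ may contain other points; your reduction ``after a translation I may assume $x_1=x_0$'' silently requires \eqref{H.4} at every point of $g^{-1}(\{1\})$ (as in Merle's original hypotheses), since otherwise your argument only excludes blow-up concentrating at $x_0$ itself.
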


It is also shown that solutions for \eqref{ICNLS} with subcritical mass are globally in time if $g$ satisfies \eqref{H.1} and \eqref{H.2}. Moreover, it is additionally shown that if $k$ satisfies \eqref{H.3} and \eqref{H.4}, then there is a blow-up solution with supercritical mass less than $\|Q\|_2+\epsilon$ for some $\epsilon>0$. Thus, Theorem \ref{theorem:MNE} means that there is no minimal-mass blow-up solution at a finite time.

In contrast, \cite{RSEU} obtains results for existence:

\begin{theorem}[\cite{RSEU}]
\label{theorem:RSEU}
Assume $N=2$ and the following for $g$:
\begin{align*}
&g\in C^5(\mathbb{R}^2)\cap W^{1,\infty}(\mathbb{R}^2),\\
&g_1\leq g\leq 1\quad\text{for some }g_1>0\quad\text{and}\quad g(x_0)=1\quad\text{for some }x_0\in\mathbb{R}^N,\\
&\nabla^2g(x_0)<0.
\end{align*}
Then for any $E_0$ such that
\[
E_0>\frac{1}{8}\int_{\mathbb{R}^n}\nabla^2g(x_0)(y,y)Q(y)^4dy>0,
\]
there exist $t_0<0$ and a unique up to phase shift $u\in C([t_0,0),H^1(\mathbb{R}^2))$ that is solution for \eqref{ICNLS} with critical mass and energy $E_0$ and blows up at $t=0$.
\end{theorem}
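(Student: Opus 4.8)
The plan is to construct the solution by the now-standard combination of renormalization, modulation theory, approximate blow-up profiles, and a backward-in-time compactness argument, adapted to account for the inhomogeneity $g$. Guided by Theorems \ref{theorem:CN}--\ref{theorem:C} and the exact profile $S$, I expect the solution to concentrate at the maximum point $x_0$ of $g$ at the pseudo-conformal rate, so I would first introduce renormalized variables
\[
u(t,x)=\frac{1}{\lambda(t)^{N/2}}\,v\!\left(s,\frac{x-\alpha(t)}{\lambda(t)}\right)e^{i\gamma(t)},\qquad \frac{ds}{dt}=\frac{1}{\lambda(t)^2},
\]
with modulation parameters $(\lambda,b,\alpha,\gamma)$ and $y=(x-\alpha)/\lambda$. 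Concentration at $x_0$ means $\alpha(t)\to x_0$ and $\lambda(t)\sim|t|$ as $t\nearrow 0$. Since $g(x_0)=1=\max g$, Taylor expansion gives $g(\alpha+\lambda y)=1+\tfrac{\lambda^2}{2}\nabla^2 g(x_0)(y,y)+o(\lambda^2)$, so after rescaling the inhomogeneity acts as a slowly varying quadratic perturbation of the homogeneous problem \eqref{CNLS}, of size $O(\lambda^2)$; the hypothesis $g\in C^5$ is exactly the regularity needed to control the resulting error terms in the profile expansion.

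Next I would construct approximate profiles. Writing $v=Q_{b}+\varepsilon$, I would build a family $Q_b=Q+bR_1+b^2R_2+\cdots$ solving the homogeneous self-similar equation to high order (as for the pseudo-conformal blow-up), together with an additional correction absorbing the $\tfrac{\lambda^2}{2}\nabla^2 g(x_0)(y,y)$ term. Imposing a suitable set of orthogonality conditions on $\varepsilon$ (removing the directions associated with the symmetry generators and the generalized kernel of the linearized operator) fixes the parameters and yields the modulation system; schematically $\lambda_s/\lambda=-b+O(\|\varepsilon\|)$, with $\gamma$ and $\alpha$ evolving slowly, and the key law for $b$ taking the form $b_s+b^2=c\,\lambda^2+(\text{errors})$, with $c$ governed by $\int\nabla^2 g(x_0)(y,y)Q(y)^4\,dy$. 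Conservation of energy then pins down the admissible limiting behaviour: computing $E(u)$ on the ansatz produces a leading contribution from the Hessian term, and the requirement that the remainder stay bounded forces $E_0$ to exceed the threshold $\tfrac18\int\nabla^2 g(x_0)(y,y)Q(y)^4\,dy$. The sign condition $\nabla^2 g(x_0)<0$ makes $\int\nabla^2 g(x_0)(y,y)Q(y)^4\,dy$ sign-definite and supplies the definiteness that the coercivity estimates below require.

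The heart of the argument is an a priori bootstrap. Using the coercivity of the linearized operator $L=(L_+,L_-)$ around $Q$ on the subspace cut out by the orthogonality conditions, I would build a Lyapunov/virial-type functional $\mathcal{F}(\varepsilon)\sim\|\varepsilon\|_{H^1}^2$ (with a localized virial correction) and show it is almost monotone along the flow, the only obstructions being the controlled $O(\lambda^2)$ inhomogeneity errors, which are integrable in $s$ because $\lambda\sim|t|$. This closes bounds of the form $\|\varepsilon\|_{H^1}\lesssim\lambda^{2}$ on the time interval where the modulation estimates persist. I would then construct the solution by backward integration: fix a sequence $t_n\nearrow 0$, solve \eqref{ICNLS} with data at $t_n$ equal to the renormalized profile tuned to energy $E_0$, obtain uniform bootstrap bounds on $[t_0,t_n]$ independent of $n$, and extract a limit $u_n\to u$ on $[t_0,0)$ that is a critical-mass solution blowing up at $t=0$ with energy $E_0$.

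Finally, uniqueness up to phase would follow by showing that \emph{any} critical-mass blow-up solution is trapped in the same bootstrap regime, so that its modulation parameters obey the same system, and then running an energy estimate on the difference of two such solutions (using the coercivity of $L$) to force them to coincide up to the single remaining symmetry, the phase shift. I expect the main obstacle to be precisely the closing of the bootstrap: because the inhomogeneity breaks the scaling and pseudo-conformal invariances of \eqref{CNLS}, the Lyapunov functional no longer rests on an exact conservation law, so one must track the $g$-induced error terms sharply enough, through the finite $C^5$ profile expansion and the $O(\lambda^2)$ modulation control, that they remain time-integrable and do not destroy the monotonicity. Arranging for the energy $E_0$ to be a free parameter, rather than rigidly fixed as in the homogeneous case, while keeping these errors under control, is the delicate point.
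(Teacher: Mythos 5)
The statement you were asked to prove is not proved anywhere in this paper: Theorem \ref{theorem:RSEU} is quoted as a previous result of Rapha\"{e}l and Szeftel \cite{RSEU}, cited without proof as background for the paper's own Theorem \ref{theorem:EMBS}. So there is no in-paper argument to compare yours against; the fair comparison is with \cite{RSEU} itself, and with the machinery this paper deploys in Sections \ref{sec:Preliminaries}--\ref{sec:proof} for its main theorem. Measured that way, your sketch reproduces the correct skeleton of the existence half: renormalization and modulation around $Q$ concentrating at the maximum point $x_0$, Taylor expansion of $g$ producing an $O(\lambda^2)$ quadratic perturbation, approximate profiles, coercivity of $L_\pm$ under orthogonality conditions, an almost-monotone Lyapunov/virial functional, and backward integration from a sequence $t_n\nearrow 0$ with a compactness extraction. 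This is indeed the strategy of \cite{RSEU}, and it is also the template the present paper follows (decomposition lemma, $\Mod(s)$ estimates, modified energy $S$, bootstrap, limiting procedure).

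Two points, however, are genuine gaps if your text were to stand as a proof. First, the energy threshold is asserted rather than derived: the inequality on $E_0$ does not come from ``the requirement that the remainder stay bounded,'' but from the sharp energy identity on the ansatz, schematically $8E_0=\frac{b^2}{\lambda^2}\|yQ\|_2^2+\lambda^{-2}\cdot(\text{Hessian contribution})+o(1)$, whose solvability for a real, nonvanishing $b$ (equivalently, positivity of the leading coefficient in the reintegrated ODE system for $(b,\lambda)$) is exactly the stated condition on $E_0$; this computation has to be carried out, since it is what ties the admissible energies to $\int\nabla^2g(x_0)(y,y)Q(y)^4dy$. Second, and more seriously, uniqueness up to phase is the deep half of \cite{RSEU} and cannot be obtained by ``running an energy estimate on the difference of two such solutions.'' A priori, a second critical-mass blow-up solution need not lie in your bootstrap regime, need not concentrate at $x_0$, and need not blow up at the pseudo-conformal rate; before any difference estimate can close, one must prove a rigidity/classification statement (in the spirit of Merle's classification \cite{MMMB}, but without pseudo-conformal symmetry, which is precisely what makes it hard here) showing that \emph{every} critical-mass blow-up solution has the same profile decomposition, rate, and concentration point. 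Your one-sentence treatment conflates these two steps and omits the harder one, so as written the uniqueness claim is unsupported.
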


The result differs from Theorems \ref{theorem:CN} and \ref{theorem:C} in that it does not use the classical method of pseudo-conformal transformation to construct the blow-up solution. Le Coz, Martel, and Rapha\"{e}l \cite{LMR}, based on the methodology of \cite{RSEU}, obtains the following results for
\begin{align}
\label{DPNLS}
\tag{DPNLS}
i\frac{\partial u}{\partial t}+\Delta u+|u|^{\frac{4}{N}}u\pm|u|^{p-1}u=0,\quad (t,x)\in\mathbb{R}\times\mathbb{R}^N.
\end{align}

Banica, Carles, and Duyckaerts \cite{BCD} presents the following result for
\begin{align}
\label{INLS}
\tag{INLS}
i\frac{\partial u}{\partial t}+\Delta u+g(x)|u|^{\frac{4}{N}}u-W(x)u=0,\quad (t,x)\in\mathbb{R}\times\mathbb{R}^N.
\end{align}

\begin{theorem}[\cite{BCD}]
\label{theorem:BCD}
Let $N=1$ or $2$, $W\in C^2(\mathbb{R}^N,\mathbb{R})$, and $g\in C^4(\mathbb{R}^N,\mathbb{R})$. Assume $\left(\frac{\partial}{\partial x}\right)^\beta W\in L^\infty(\mathbb{R}^N)\ (|\beta|\leq 2)$, $\left(\frac{\partial}{\partial x}\right)^\beta g\in L^\infty(\mathbb{R}^N)\ (|\beta|\leq 4)$, and
\[
g(0)=1,\quad \frac{\partial g}{\partial x_j}(0)=\frac{\partial^2 g}{\partial x_j\partial x_k}(0)=0\quad (1\leq j,k\leq N).
\]
Then there exist $T>0$ and a solution $u\in C((0,T),\Sigma^1)$ for \eqref{INLS} such that
\[
\left\|u(t)-\frac{1}{\lambda(t)^\frac{N}{2}}Q\left(\frac{x-x(t)}{\lambda(t)}\right)e^{i\frac{|x|^2}{4t}-i\theta\left(\frac{1}{t}\right)-itV(0)}\right\|_{\Sigma^1}\rightarrow 0\quad (t\searrow 0),
\]
where $\theta$ and $\lambda$ are continuous real-valued functions and $x$ is a continuous $\mathbb{R}^N$-valued function such that
\begin{align*}
&\theta(\tau)=\tau+o(\tau)\quad\text{as }\tau\rightarrow+\infty,\\
&\lambda(t)\sim t\text{ and }|x(t)|=o(t)\quad\text{as }t\searrow 0.
\end{align*}
\end{theorem}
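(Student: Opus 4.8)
The plan is to reduce the finite-time blow-up problem (as $t\searrow0$) to a global-in-time problem (as $s\to+\infty$) by a pseudo-conformal change of variables, and then to construct the solution \emph{backward from $s=+\infty$} with the pure ground state as its asymptotic state. Concretely, I would look for $u$ in the form
\[
u(t,x)=\frac{1}{t^{N/2}}\,v\!\left(\tfrac1t,\tfrac{x}{t}\right)e^{i\frac{|x|^2}{4t}},
\]
the inverse pseudo-conformal transform of a profile $v$. Writing $s=1/t$ and $y=x/t$ (so $x=y/s$ and $s\to+\infty$ as $t\searrow0$), the pseudo-conformal identity turns the linear and conformal parts into the free flow and preserves the critical nonlinearity exactly, so that $v$ solves the perturbed mass-critical equation
\[
i\partial_s v+\Delta_y v+|v|^{4/N}v=-\bigl(g(y/s)-1\bigr)|v|^{4/N}v+\frac{1}{s^2}W(y/s)\,v=:\mathcal E[v,s].
\]
The whole point of the hypotheses is that $\mathcal E$ is \emph{integrable in $s$} near $+\infty$: because $g(0)=1$ and $\nabla g(0)=\nabla^2 g(0)=0$, a Taylor expansion (valid since $g\in C^4$ with bounded derivatives) gives $g(y/s)-1=O(s^{-3}|y|^3)$, while $s^{-2}W(y/s)$ has leading term $s^{-2}W(0)$ with $\int^\infty s^{-2}\,ds<\infty$. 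The constant part $s^{-2}W(0)$ is absorbed into the phase $e^{-iW(0)/s}=e^{-iW(0)t}$, which is exactly the factor $-itW(0)$ appearing in the statement. Thus $\mathcal E$ is a harmless, time-integrable perturbation of the homogeneous critical flow, and $N\le2$ is used to keep the nonlinear error $(g-1)|v|^{4/N}v$ and all weighted terms controllable in $\Sigma^1$ via Sobolev and Strichartz estimates.

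Next I would build $v$ as a modulated perturbation of the solitary wave $Q\,e^{is}$. Introducing modulation parameters — scaling $\lambda(s)$ (close to $1$), translation $x(s)$, phase $\gamma(s)$, and a quadratic-phase parameter $b(s)$ — I would decompose
\[
v(s,y)=\frac{1}{\lambda(s)^{N/2}}\bigl(Q+\varepsilon(s)\bigr)\!\left(\tfrac{y-x(s)}{\lambda(s)}\right)e^{i\gamma(s)+\cdots}
\]
and impose orthogonality conditions on $\varepsilon$ that project out the (generalized) kernel of the operators $L_+=-\Delta+1-(1+\tfrac4N)Q^{4/N}$ and $L_-=-\Delta+1-Q^{4/N}$ linearizing the flow at $Q$, i.e.\ the directions generated by phase, scaling, translation, Galilean boost and the pseudo-conformal symmetry. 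These orthogonality conditions close into a system of modulation ODEs for $(\lambda,x,\gamma,b)$; integrating it against the integrable forcing from $\mathcal E$ yields the convergence of the parameters and, after undoing the transformation, the claimed asymptotics $\lambda(t)\sim t$, $|x(t)|=o(t)$ and $\theta(\tau)=\tau+o(\tau)$.

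Finally I would close the estimate on $\varepsilon$ and pass to the limit. Rather than solving from $s=+\infty$ directly, I would solve the equation backward from a sequence of finite times $s_n\to+\infty$ with data $Q\,e^{is_n}$, establish $\Sigma^1$ bounds on $\varepsilon$ that are \emph{uniform in $n$} on a common interval $[s_0,s_n]$ by combining Strichartz estimates, the mass and energy conservation laws, and the coercivity of $L_\pm$ modulo the null directions removed by the orthogonality conditions, and then extract a limit $v$. Undoing the pseudo-conformal transformation produces $u\in C((0,T),\Sigma^1)$; the mass is preserved by the transformation, so $\|u\|_2=\|Q\|_2$, and the decay $\varepsilon(s)\to0$ in $\Sigma^1$ as $s\to+\infty$ translates into the stated $\Sigma^1$-convergence of $u(t)$ to the singular profile as $t\searrow0$.

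I expect the main obstacle to be closing the uniform bootstrap for $\varepsilon$. The difficulty is twofold: because the problem is mass-critical, $L_+$ and $L_-$ are coercive only after projecting out the scaling, translation, phase, Galilean and pseudo-conformal directions, so the modulation parameters and orthogonality conditions must be chosen so that this degeneracy is exactly compensated and the residual quadratic form is positive; and one must show that the forcing $\mathcal E$ contributes only a bounded, in fact vanishing, amount to the energy functional as $s\to+\infty$. This is precisely where $N\le2$, the second-order degeneracy of $g$ at the origin, and the $C^2$-boundedness of $W$ are all used, to guarantee integrable-in-$s$ decay of the error and its control in $\Sigma^1$. Combining this time-integrable decay with the coercivity so that the remainder does not accumulate is the heart of the argument.
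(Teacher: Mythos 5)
This theorem appears in the paper only as a quoted prior result from \cite{BCD}, with no proof given (the paper's own main theorem is proved by a different, non-pseudo-conformal method following \cite{LMR,RSEU}), so the meaningful comparison is with the proof in the cited reference: your proposal follows essentially that same route --- the pseudo-conformal transformation reducing the blow-up problem to a large-time problem for $i\partial_s v+\Delta v+g(y/s)|v|^{4/N}v-s^{-2}W(y/s)v=0$, integrability in $s$ of the perturbation coming from the second-order flatness of $g$ at the origin and the boundedness of $W$ (with the $s^{-2}W(0)$ part absorbed into the phase $e^{-itW(0)}$), modulation theory with orthogonality conditions and the coercivity of $L_\pm$, and a compactness argument on solutions prescribed at final times $s_n\to+\infty$. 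Your identification of where $N\leq 2$ and each hypothesis on $g$ and $W$ enters also matches that argument, so I see no gap in the outline.
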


\cite{NP} obtains the following result, which partially extends the result of \cite{BCD} using the method of \cite{LMR}.

\begin{theorem}[\cite{NP}]
\label{theorem:NP}
Let the potential $W$ satisfy 
\begin{align*}
&W\in C^{1,1}_{\text{loc}}(\mathbb{R}^N),\\
&\nabla W,\nabla^2 W\in L^q(\mathbb{R}^N)+L^{\infty}(\mathbb{R}^N)\quad \left(q\geq 2\ \text{and}\ q>N\right).
\end{align*}
Then there exist $t_0<0$ and a radial initial value $u_0\in \Sigma^1$ with $\|u_0\|_2=\|Q\|_2$ such that the corresponding solution $u$ for \eqref{PNLS} with $u(t_0)=u_0$ blows up at $t=0$. Moreover,
\[
\left\|u(t,x)-\frac{1}{\lambda(t)^\frac{N}{2}}Q\left(\frac{x+w(t)}{\lambda(t)}\right)e^{-i\frac{b(t)}{4}\frac{|x+w(t)|^2}{\lambda(t)^2}+i\gamma(t)}\right\|_{\Sigma^1}\rightarrow 0\quad (t\nearrow 0)
\]
holds for some $C^1$ functions $\lambda:(t_0,0)\rightarrow(0,\infty)$, $b,\gamma:(t_0,0)\rightarrow\mathbb{R}$, and $w:(t_0,0)\rightarrow\mathbb{R}^N$ such that
\[
\lambda(t)=|t|\left(1+o(1)\right),\quad b(t)=|t|\left(1+o(1)\right),\quad \gamma(t)\sim |t|^{-1},\quad |w(t)|=O(|t|^2)
\]
as $t\nearrow 0$.
\end{theorem}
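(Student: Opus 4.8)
The plan is to follow the backward (bubbling) construction of minimal-mass blow-up solutions developed by Merle, Rapha\"{e}l--Szeftel, and especially Le Coz--Martel--Rapha\"{e}l \cite{LMR}, adapting it so that $W$ is never differentiated pointwise but is instead controlled as an integral perturbation through its $L^q+L^\infty$ structure. First I would pass to the pseudo-conformal renormalized frame: writing $u(t,x)=\lambda(t)^{-N/2}v(s,y)e^{i\gamma(t)}$ with $y=(x+w(t))/\lambda(t)$ and $ds/dt=\lambda(t)^{-2}$, equation \eqref{PNLS} becomes a perturbation of \eqref{CNLS} for $v$ in which the potential enters only through the rescaled term $\lambda^2W(\lambda y-w)\,v$. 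Measured against the ansatz $P(y)=Q(y)e^{-ib|y|^2/4}$, which solves the self-similar profile equation exactly when $W\equiv 0$, the residual is a combination of modulation-equation terms and this rescaled potential term.

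Next I would impose the geometric decomposition $v=P+\varepsilon$ and fix the $N+3$ real parameters $(\lambda,b,\gamma,w)$ by $N+3$ orthogonality conditions on $\varepsilon$, taken against the generators attached to each parameter: the scaling direction $\Lambda Q=\frac{N}{2}Q+y\cdot\nabla Q$, the phase direction, the $N$ translation directions $\nabla Q$, and the conformal direction $|y|^2Q$. Differentiating these conditions produces the modulation system for $s\mapsto(\lambda,b,\gamma,w)$, whose leading balance reproduces the pseudo-conformal law $\lambda(t)=|t|(1+o(1))$, $b(t)=|t|(1+o(1))$, $\gamma(t)\sim|t|^{-1}$, and the drift $|w(t)|=O(|t|^2)$. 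The potential contributes forcing terms of the shape $\int W(\lambda y-w)\,\Phi(y)\,dy$ and $\int\nabla W(\lambda y-w)\cdot\Psi(y)\,dy$ with $\Phi,\Psi$ smooth and exponentially localized, which I must show are of lower order than the principal modulation balance.

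The heart of the argument is a uniform bootstrap estimate controlling $\varepsilon$ in a weighted energy norm along the renormalized flow. I would build a Lyapunov/virial functional adapted to the linearized operator about $Q$, whose coercivity modulo the orthogonality directions is inherited from the mass-critical spectral theory, and differentiate it in $s$. The genuinely new difficulty is estimating the potential contributions to this derivative: terms carrying $W$, $\nabla W$, and---once the functional carries the virial weight $|y|^2$ or one integrates by parts against the quadratic phase---$\nabla^2W$. Rather than Taylor-expanding $W$ as in \cite{BCD}, I would split $W$ and its derivatives along the $L^q+L^\infty$ decomposition and estimate each piece by H\"{o}lder's inequality together with the Gagliardo--Nirenberg/Sobolev embedding and the exponential decay of $Q$; the conditions $q\ge 2$ and $q>N$ are exactly what leaves a positive power of $\lambda\sim|t|$ to spare, so that the potential terms are integrable in $s$ and the bootstrap closes.

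Finally I would run the backward construction: fix a sequence $t_n\nearrow 0$, solve \eqref{PNLS} backward from $t=t_n$ with data equal to the exact renormalized profile so that $\varepsilon(t_n)=0$, and invoke the uniform bootstrap bounds to obtain estimates on $[t_0,t_n]$ independent of $n$. Local well-posedness in $\Sigma^1$ and continuous dependence then let me extract a limit $u\in C((t_0,0),\Sigma^1)$ solving \eqref{PNLS}, with $\|u\|_2=\|Q\|_2$ by mass conservation and $\|\nabla u(t)\|_2\sim|t|^{-1}$, whence blow-up at $t=0$; the modulation estimates deliver the stated $\Sigma^1$-profile. I expect the main obstacle to be the energy estimate in the third step for the $\nabla^2W$ term against the weighted error, where the limited regularity and possible growth of $W$ interact most delicately with the virial structure and where the sharp exponent $q>N$ is consumed.
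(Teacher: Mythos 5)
Your overall architecture (renormalized frame, modulation decomposition with orthogonality conditions, coercive Lyapunov functional, backward construction from $t_n$ with $\varepsilon(t_n)=0$, compactness) is indeed the scheme of \cite{LMR,NP} and of this paper, but there is a genuine gap at the one step where the theorem's hypotheses are actually consumed: closing the bootstrap for $\lambda$ and $b$. Knowing only that the potential's forcing terms are ``of lower order than the principal modulation balance'' gives $\bigl|\frac{\partial b}{\partial s}+b^2\bigr|\lesssim s^{-(2+\kappa)}$ with $\kappa<1$, and this does \emph{not} reproduce the pseudo-conformal law, because the Riccati equation for $b$ must be integrated \emph{backward} from $s=s_1$ and that integration is unstable: writing $\delta=b-b_{\mathrm{app}}$ one gets, roughly, $s^2\delta(s)\approx s_1^2\delta(s_1)-\int_s^{s_1}\sigma^2\,\mathrm{err}(\sigma)\,d\sigma$, and with $\mathrm{err}=O(\sigma^{-(2+\kappa)})$ the integral is of size $s_1^{1-\kappa}$, which diverges as $s_1\to\infty$; the parameter estimates are then not uniform in the endpoint $t_n$ and the compactness step collapses. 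This is exactly why \cite{NP} needs the refined estimate $\frac{\partial b}{\partial s}+b^2=o(s^{-3})$, and it is in producing this refinement --- second-order Taylor expansion of $W$ inside the inner products with the exponentially localized directions --- that the hypotheses $W\in C^{1,1}_{\mathrm{loc}}$ and $\nabla^2W\in L^q+L^\infty$ are spent. Your sketch never confronts this, and in fact locates $\nabla^2W$ in the wrong place: in the derivative of the energy/virial functional only $W$ and $\nabla W$ ever appear (pairing $\lambda^2W(\lambda y-w)\varepsilon$ against $\Lambda\varepsilon$ or $\nabla\varepsilon$ and integrating by parts produces $\nabla W$, never $\nabla^2W$), so no sharp exponent is consumed there.

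Two further points. First, your orthogonality directions (scaling $\Lambda Q$, phase, translations $\nabla Q$, conformal $|y|^2Q$) are not the ones this line of work runs on: the scheme of \cite{RSEU,LMR,NP} and of this paper imposes $(\varepsilon,i\Lambda Q)_2=(\varepsilon,|y|^2Q)_2=(\varepsilon,i\rho)_2=(\varepsilon,yQ)_2=0$, where $L_+\rho=|y|^2Q$. The $\rho$-direction is what matches the coercivity estimate \eqref{Lcoer} (which penalizes $(\im u,\rho)_2$, not $(\im u,Q)_2$) and what extracts the $b$-equation at usable precision; with your choice you would have to prove a different spectral estimate and re-derive the modulation system, so you cannot simply quote the standard coercivity. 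Second, for comparison: the present paper proves a stronger statement precisely by removing the unstable Riccati integration --- conservation of energy together with Proposition \ref{profileprop} yields the algebraic relation $b^2\|yQ\|_2^2\approx 8\lambda^2E_0$ directly (Lemma \ref{rebootstrap}), so only the stable first-order equation for $\lambda$ is integrated, and no second derivative of $W$ is ever needed. To prove Theorem \ref{theorem:NP} as stated you must either carry out the $o(s^{-3})$ refinement using $\nabla^2W$, or adopt this energy argument.
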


\subsection{Main result}
In the main result, the following conditions are assumed.

The inhomogeneous function $g$ satisfies the following conditions:
\begin{align}
\label{G1}
\tag{G1}
&g\in W^{1,\infty}(\mathbb{R}^N),\quad x\cdot\nabla g\in L^\infty(\mathbb{R}^N),\\
\label{G2}
\tag{G2}
&|g(x)-1|\lesssim |x|^{2+r},\quad |\nabla g(x)|\lesssim |x|^{1+r}\quad(|x|\leq 1)
\end{align}
for some $r>0$.

We use the following notation
\[
X(f):=\left\{g:\text{measurable}\middle||g|\leq C f\text{ for some }C>0\right\}.
\]

The potential $W$ is the sum of potentials satisfying  \eqref{W1} or the following conditions:
\begin{align}
\label{W2}
\tag{W2}
&\left\{\begin{array}{l}
W\in L^{p_1}(\mathbb{R}^N)+L^{\infty}(\mathbb{R}^N)\quad \left(p_1\geq 2\text{ and }p_1>\frac{N}{2}\right),\\
\nabla W\in L^{p_2}(\mathbb{R}^N)+X(1+|\cdot|)\quad\left(p_2\geq 2\text{ and }p_2>N\right),
\end{array}\right.
\end{align}
and furthermore satisfies one of the followings:
\begin{align}
\label{W2-1}
\tag{W2-1}
W&\text{ is locally Lipschitz continuous},\\
\label{W2-2}
\tag{W2-2}
W&\in X(|\cdot|^{r'}e^{C|\cdot|})\quad\text{for some }C,r'>0.
\end{align}
Namely, $W$ is the sum of potentials satisfying \eqref{W1}, \eqref{W2} and \eqref{W2-1}, or \eqref{W2} and \eqref{W2-2}.

\begin{theorem}[Existence of a critical-mass blow-up solution]
\label{theorem:EMBS}
For any energy level $E_0>0$, there exist $t_0<0$ and a radial initial value $u_0\in \Sigma^1$ with $\|u_0\|_2=\|Q\|_2$ and $E(u_0)=E_0$ such that the corresponding solution $u$ for \eqref{NLS} with $u(t_0)=u_0$ blows up at $t=0$. Moreover,
\[
\left\|u(t,x)-\frac{1}{\lambda(t)^\frac{N}{2}}Q\left(\frac{x+w(t)}{\lambda(t)}\right)e^{-i\frac{b(t)}{4}\frac{|x+w(t)|^2}{\lambda(t)^2}+i\gamma(t)}\right\|_{\Sigma^1}\rightarrow 0\quad (t\nearrow 0)
\]
holds for some $C^1$ functions $\lambda:(t_0,0)\rightarrow(0,\infty)$, $b,\gamma:(t_0,0)\rightarrow\mathbb{R}$, and $w:(t_0,0)\rightarrow\mathbb{R}^N$ such that
\[
\lambda(t)=\sqrt{\frac{8E_0}{\|yQ\|_2^2}}|t|\left(1+o(1)\right),\quad b(t)=\frac{8E_0}{\|yQ\|_2^2}|t|\left(1+o(1)\right),\quad \gamma(t)\sim |t|^{-1},\quad |w(t)|=o(|t|)
\]
as $t\nearrow 0$.
\end{theorem}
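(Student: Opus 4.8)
The plan is to follow the energy-method construction of minimal-mass blow-up solutions developed in \cite{RSEU,LMR} and adapted to potentials in \cite{NP}, rather than the explicit pseudo-conformal transformation behind Theorems \ref{theorem:CN}--\ref{theorem:C}: since the potential $W$ and the inhomogeneity $g$ cannot be removed algebraically from \eqref{NLS}, no closed-form blow-up profile is available and one must construct the solution as a controlled perturbation of the pseudo-conformal ground-state dynamics. I would work in renormalized variables, seeking the solution in the form
\[
u(t,x)=\frac{1}{\lambda(t)^{N/2}}\left(P_b+\varepsilon\right)\!\left(\frac{x+w(t)}{\lambda(t)}\right)e^{-i\frac{b(t)}{4}\frac{|x+w(t)|^2}{\lambda(t)^2}+i\gamma(t)},
\]
with rescaled time $s$ defined by $ds/dt=\lambda^{-2}$, where $(\lambda,b,\gamma,w)$ are modulation parameters and $\varepsilon$ is a small remainder.

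First I would construct an approximate profile $P_b$ as a finite expansion $P_b=Q+b^2P_2+\cdots$ around the ground state, the corrections $P_k$ solving the elliptic equations obtained by collecting powers of $b$ after substitution into \eqref{NLS}, so that the residual is $o(b^K)$ in $\Sigma^1$ for $K$ large. The flatness hypothesis \eqref{G2} is essential here: since $|g-1|\lesssim|x|^{2+r}$ forces $g$ and its first two derivatives to vanish at the origin, the inhomogeneity perturbs the profile equation only at subleading order, in contrast to \cite{RSEU}, where the nondegenerate curvature $\nabla^2g(x_0)$ drives the leading dynamics. Consequently the leading-order flow remains that of the free pseudo-conformal solution $S$, deformed by the potential, which is exactly why the blow-up rate is $|t|^{-1}$ as in Theorem \ref{theorem:BCD}.

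Next I would impose the geometric decomposition, fixing the four parameters by requiring $\varepsilon$ to be orthogonal to the generalized null space of the linearized operator about $Q$. Differentiating these conditions in $s$ yields the modulation system whose leading order is $-\lambda_s/\lambda\approx b$, $b_s\approx-b^2$, $\gamma_s\approx1$, with $w_s$ subleading. Integrating and returning to the variable $t$ gives $\lambda\sim c|t|$, $b\sim c^2|t|$, $\gamma\sim|t|^{-1}$, and $|w|=o(|t|)$, with the constant $c$ fixed by the almost-conserved energy: after the $O(\lambda^{-2})$ kinetic and nonlinear terms cancel through $E_{\mathrm{crit}}(Q)=0$ and the flatness of $g$, the renormalized energy reduces at leading order to $\tfrac{b^2}{8\lambda^2}\|yQ\|_2^2$, and matching this to the conserved value $E(u)=E_0$ --- the remaining potential and inhomogeneity contributions being of lower order --- yields $c^2=8E_0/\|yQ\|_2^2$, as stated.

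The technical core is a bootstrap on a backward interval $[t_n,0)$ with initial data $u_n(t_n)$ given by the approximate profile at time $t_n$: assuming a priori smallness of $\|\varepsilon\|_{\Sigma^1}$ and of the parameter errors, one closes the estimates through a mixed energy--virial Lyapunov functional for $\varepsilon$, whose coercivity modulo the orthogonality directions is inherited from the spectral structure of the linearized operator. The uniform bounds then allow extraction, as $t_n\nearrow0$, of a limiting solution on a fixed interval $(t_0,0)$ with $\|u_0\|_2=\|Q\|_2$, $E(u_0)=E_0$, and the claimed $\Sigma^1$-asymptotics. I expect the main obstacle to be controlling the potential contributions to this functional when $W$ grows at spatial infinity or is merely of low regularity: the weighted component $\||y|\varepsilon\|_2$ of the $\Sigma^1$ norm is indispensable to absorb $\int W|\varepsilon|^2$ and the cross terms from $\nabla W$, the splittings $W\in L^{p_1}+L^\infty$ and $\nabla W\in L^{p_2}+X(1+|\cdot|)$ feed Hölder and Gagliardo--Nirenberg estimates, \eqref{W1} accommodates smooth at-most-quadratic growth, and \eqref{W2-1} or \eqref{W2-2} compensates for the missing Lipschitz regularity when differentiating the localized virial quantity. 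Showing that all of these remain strictly lower order than the coercive part --- so that the Lyapunov functional still closes --- is precisely what extends Theorem \ref{theorem:BCD} beyond smooth, derivative-bounded potentials and $N\le2$.
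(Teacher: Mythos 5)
Your overall architecture --- modulation decomposition around the pseudo-conformal ground-state dynamics, orthogonality conditions, a mixed energy--virial functional for $\varepsilon$, and extraction of a limit along $t_n\nearrow 0$ --- matches the paper's, but two of your steps fail under the stated hypotheses. First, the approximate profile $P_b=Q+b^2P_2+\cdots$ with residual $o(b^K)$ for large $K$ cannot be constructed here: the elliptic equations for the corrections $P_k$ have source terms obtained by Taylor-expanding $g(\lambda y-w)$ and $\lambda^2W(\lambda y-w)$ in powers of $\lambda\approx b$, hence they require derivatives of $g$ and $W$ of order at least two, while in this paper $g$ is only $W^{1,\infty}$ and $W$ is only once weakly differentiable (possibly growing at infinity, possibly non-Lipschitz). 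The paper deliberately works with the bare profile $Q_{\lambda,b,w,\gamma}$ and accepts a residual $\Psi=\lambda^2W(\lambda\cdot-w)Q$ whose size is only $\|e^{\epsilon'|y|}\Psi\|_{H^1}\lesssim\lambda^{1+\kappa}(\lambda+|w|)$ (Proposition \ref{Psiesti}), with the bootstrap exponent $L=1+\kappa/2$ tuned to exactly this loss; no higher-order profile exists or is needed.

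Second, and decisively: you derive $\lambda\sim c|t|$, $b\sim c^2|t|$ by integrating the modulation system $-\lambda_s/\lambda\approx b$, $b_s\approx-b^2$, and invoke the energy only afterwards to fix the constant $c$. Under the present assumptions the best available modulation estimate is $|\Mod(s)|\lesssim s^{-2L}=s^{-(2+\kappa)}$ with $\kappa\in(0,1]$ possibly small (see \eqref{modesti}). Setting $\beta=1/b$, the bound $|\partial_s b+b^2|\lesssim s^{-(2+\kappa)}$ only gives $\partial_s\beta=1+O(s^{-\kappa})$, and integrating backward from $s_1$ accumulates an error of order $\int_s^{s_1}\tau^{-\kappa}d\tau\approx s_1^{1-\kappa}$ in $\beta$, which is not uniformly small as $s_1\to\infty$; hence the bootstrap bound $|b/b_{\app}-1|<s^{-\kappa/2}$ cannot be recovered at fixed $s$. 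This is precisely the obstruction that forced \cite{NP} to assume $W\in C^{1,1}_{\loc}$ (so that $\partial_s b+b^2=o(s^{-3})$), and removing it is the main novelty of this paper: almost-conservation of the profile energy (Proposition \ref{profileprop} combined with \eqref{modesti}) yields the \emph{pointwise} algebraic relation $\left|b^2\|yQ\|_2^2-8E_0\lambda^2\right|\lesssim s^{-(2+\kappa)}$, which eliminates $b$ in favour of $\lambda$ without ever integrating $\partial_s b+b^2$; one then integrates only the single equation $\partial_s\bigl(\sqrt{\|yQ\|_2^2/(8E_0)}\,\lambda^{-1}-s\bigr)=O(s^{-(2L-1)}+s^{-(1+\kappa)})$ (Lemma \ref{rebootstrap}). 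In your proposal the energy relation appears only as a normalization of a constant in asymptotic laws that the integration step cannot deliver, so the bootstrap does not close and the argument has a genuine gap exactly where this paper departs from \cite{LMR} and \cite{NP}.
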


\begin{remark}
In contrast, if $g\leq 1$ and $W$ satisfies \eqref{W1} or \eqref{pote-1}, then any subcritical-mass solution for \eqref{NLS} exists globally in time and is bounded in $H^1$. This can be proved easily by the Gagliardo-Nirenberg inequality and the Sobolev embedding theorem. Therefore, the solution in Theorem \ref{theorem:EMBS} is a minimal-mass blow-up solution if $g\leq 1$.
\end{remark}

\subsection{Comments regarding the main result}
Theorem \ref{theorem:EMBS} is a generalisation of Theorems \ref{theorem:BCD} and \ref{theorem:NP}.

$W^{1,\infty}_{\loc}$ can be regarded as $C^{0,1}_{\loc}$ by identifying the difference on the null set. Thus, $|g(x)-1|\lesssim |x|^{2+r}$ in \eqref{G2} may be replaced by $g(x)=1$.

From the assumption \eqref{H.4} in Theorem \ref{theorem:MNE}, which is the nonexistence result, we obtain
\[
|x|^{\alpha_0}\leq |\nabla g(x)|\quad\text{for }|x|\leq\rho_0
\]
for some $\alpha_0\in(0,1)$, where we assume $g(0)=1$. In contrast, Theorem \ref{theorem:EMBS}, which is the existence result, assumes
\[
|\nabla g(x)|\lesssim |x|^{1+r}\quad\text{for }|x|\leq1
\]
for some $r>0$. Therefore, the threshold for the existence and non-existence of blow-up solutions with critical mass can be said to be $\alpha_0=1$ (i.e., $r=0$). The result in the case of the threshold has been obtained in part by Theorem \ref{theorem:RSEU}.

From the point of view of differentiability, it seems that neither $g$ nor $W$ need to be smooth over the whole $\mathbb{R}^N$, since blow-up is crucial for behaviour in the neighbourhood of the blow-up point. On the other hand, first-order differentiations is necessary for the technicality of the proof. Thus, the assumption that $g$ and $W$ are first-order weakly differentiable would be quite close to the limit.

Compared to Theorem \ref{theorem:NP}, Theorem \ref{theorem:EMBS} requires less order of differentiation for the potential $W$. In \cite{NP}, the bootstrap of $\lambda$ and $b$ is done by differentiating and then integrating, thus the condition $\frac{\partial b}{\partial s}+b^2=o(s^{-3})$ is required. Thus, \cite{NP} has required $C^{1,1}_{\loc}$ for $W$. However, in this paper, the condition is removed by using the property of energy. Consequently, we reduce the order of differentiation.

From the point of view of integrability, it would be possible to replace \eqref{G1} and \eqref{W2} with weaker conditions. In fact, a scrutiny of proofs of Proposition \ref{Psiesti}, Lemma \ref{Lambda}, etc. shows that some of them can be substituted by other integrable conditions in their proofs. However, it would be complex to attempt to describe them exhaustively.

\section{Notation and preliminaries}
\label{sec:Preliminaries}
We define
\begin{align*}
&(u,v)_2:=\re\int_{\mathbb{R}^N}u(x)\overline{v}(x)dx,\quad \left\|u\right\|_p:=\left(\int_{\mathbb{R}^N}|u(x)|^pdx\right)^\frac{1}{p},\\
&f(z):=|z|^\frac{4}{N}z,\quad  F(z):=\frac{1}{2+\frac{4}{N}}|z|^{2+\frac{4}{N}}\quad \text{for $z\in\mathbb{C}$}.
\end{align*}
By identifying $\mathbb{C}$ with $\mathbb{R}^2$, we denote the differentials of $f$ and $F$ by $df$ and $dF$, respectively. We define
\[
\Lambda:=\frac{N}{2}+x\cdot\nabla,\quad L_+:=-\Delta+1-\left(1+\frac{4}{N}\right)Q^\frac{4}{N},\quad L_-:=-\Delta+1-Q^\frac{4}{N}.
\]
Namely, $\Lambda$ is the generator of $L^2$-scaling, and $L_+$ and $L_-$ come from the linearised Schr\"{o}dinger operator to close $Q$. Then
\[
L_-Q=0,\quad L_+\Lambda Q=-2Q,\quad L_-|x|^2Q=-4\Lambda Q,\quad L_+\rho=|x|^2 Q,\quad L_-xQ=-\nabla Q,\quad L_+\nabla Q=0
\]
hold, where $\rho\in\mathcal{S}(\mathbb{R}^N)$ is the unique radial solution for $L_+\rho=|x|^2 Q$. Note that there exist $C_\alpha,\kappa_\alpha>0$ such that
\[
\left|\left(\frac{\partial}{\partial x}\right)^\alpha Q(x)\right|\leq C_\alpha Q(x),\quad \left|\left(\frac{\partial}{\partial x}\right)^\alpha \rho(x)\right|\leq C_\alpha(1+|x|)^{\kappa_\alpha} Q(x).
\]
for any multi-index $\alpha$. Furthermore, there exists $\mu>0$ such that for any $u\in H^1(\mathbb{R}^N)$,
\begin{align}
\label{Lcoer}
&\left\langle L_+\re u,\re u\right\rangle+\left\langle L_-\im u,\im u\right\rangle\nonumber\\
\geq&\ \mu\left\|u\right\|_{H^1}^2-\frac{1}{\mu}\left({(\re u,Q)_2}^2+\left|(\re u,xQ)_2\right|^2+{(\re u,|x|^2 Q)_2}^2+{(\im u,\rho)_2}^2\right)
\end{align}
holds (see, e.g., \cite{MRO,MRUPB,RSEU,WL}). Finally, we use the notation $\lesssim$ and $\gtrsim$ when the inequalities hold up to a positive constant. We also use the notation $\approx$ when $\lesssim$ and $\gtrsim$ hold.

We estimate the error terms $\Psi$ that is defined by
\[
\Psi(y;\lambda,b):=\lambda^2W(\lambda y-w)Q(y).
\]
Moreover, we define $\kappa$ by
\[
\kappa:=\min\left\{1,2-\frac{N}{p_1},1-\frac{N}{p_2},r,r'\right\}\in(0,1].
\]
Without loss of generality, we may assume that $W(0)=0$. In particular, if $W$ satisfies \eqref{W1},
\[
W\in X(|\cdot|+|\cdot|^2),\quad \nabla W\in X(1+|\cdot|)
\]
holds.

\begin{proposition}[Estimate of $\Psi$]
\label{Psiesti}
There exists a sufficiently small constant $\epsilon'>0$ such that 
\[
\left\|e^{\epsilon'|y|}\Psi\right\|_{H^1}\lesssim\lambda^{1+\kappa}(\lambda+|w|)
\]
for $0<\lambda\ll 1$ and $w\in\mathbb{R}^N$ such that $|w|\leq 1$.
\end{proposition}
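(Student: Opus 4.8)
The plan is to reduce the $H^1$-bound to two weighted $L^2$-estimates and then treat each admissible type of potential separately, using everywhere the exponential decay of $Q$. First I would note that since $W$ is a finite sum of potentials of the three admissible types, it suffices by the triangle inequality to prove the estimate for a single summand; moreover, subtracting from the \eqref{W1}- and \eqref{W2-1}-summands their values at the origin (constants whose sum is $W(0)=0$, hence cancelling against $Q$), I may assume the summand under consideration vanishes at the origin while keeping its type. Writing $h:=e^{\epsilon'|\cdot|}\Psi$ and using $|\nabla h|\le\epsilon'|h|+e^{\epsilon'|\cdot|}|\nabla\Psi|$, it is enough to bound $\|e^{\epsilon'|\cdot|}\Psi\|_2$ and $\|e^{\epsilon'|\cdot|}\nabla\Psi\|_2$. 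Since $\nabla\Psi=\lambda^2[\lambda(\nabla W)(\lambda\cdot-w)Q+W(\lambda\cdot-w)\nabla Q]$ and $|\nabla Q|\lesssim Q$, the quantities to control are $\lambda^2\|W(\lambda\cdot-w)Qe^{\epsilon'|\cdot|}\|_2$ and $\lambda^3\|(\nabla W)(\lambda\cdot-w)Qe^{\epsilon'|\cdot|}\|_2$. Throughout I take $\epsilon'\in(0,1)$ and $\lambda$ small, so that $Q(y)e^{\epsilon'|y|}$ (and, when needed, $Q(y)e^{(\epsilon'+C\lambda)|y|}$) decays exponentially and absorbs any polynomial or $e^{C\lambda|y|}$ factor; in particular all weighted norms $\||y|^kQe^{\epsilon'|\cdot|}\|_q$ are finite.

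For \eqref{W1} and \eqref{W2-2} I would argue globally. For \eqref{W1} use $|W(z)|\lesssim|z|+|z|^2$ and $|\nabla W(z)|\lesssim1+|z|$; for \eqref{W2-2} use $|W(z)|\lesssim|z|^{r'}e^{C|z|}$ together with the splitting $\nabla W=(\nabla W)_{p_2}+(\nabla W)_X$, $(\nabla W)_X\in X(1+|\cdot|)$. Substituting $z=\lambda y-w$ and bounding $|z|\le\lambda|y|+|w|$, the $W$-term reduces to weighted norms of $(\lambda|y|+|w|)^s$ against the Gaussian, producing the powers $\lambda^2(\lambda+|w|)$, $\lambda^{2+r'}$, and $\lambda^2|w|^{r'}$; each is $\le\lambda^{1+\kappa}(\lambda+|w|)$ by the elementary inequalities $\lambda^2\le\lambda^{1+\kappa}$, $\lambda^{2+r'}\le\lambda^{1+\kappa}\lambda$, and (splitting on whether $|w|\le\lambda$ or $|w|>\lambda$) $\lambda^2|w|^{r'}\lesssim\lambda^{1+\kappa}(\lambda+|w|)$, all of which use only $\kappa\le\min\{1,r'\}$. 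For the $\nabla W$-term the $X(1+|\cdot|)$ part contributes $\lesssim\lambda^3$, while the $L^{p_2}$ part is handled by H\"older: after the change of variables $\|(\nabla W)_{p_2}(\lambda\cdot-w)\|_{p_2}=\lambda^{-N/p_2}\|(\nabla W)_{p_2}\|_{p_2}$, and $\tfrac1{p_2}+\tfrac1{\sigma}=\tfrac12$ with $Qe^{\epsilon'|\cdot|}\in L^\sigma$, giving $\lambda^{3-N/p_2}$. Since $\kappa\le1-N/p_2$ forces $3-N/p_2\ge2+\kappa$, this is $\le\lambda^{1+\kappa}\lambda\le\lambda^{1+\kappa}(\lambda+|w|)$.

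For \eqref{W2-1} the potential carries only the global integrability $W\in L^{p_1}+L^\infty$, and the factor $(\lambda+|w|)$ must be extracted from $W(0)=0$ and local Lipschitz continuity. I would split $\mathbb{R}^N=\{|y|\le\lambda^{-1/2}\}\cup\{|y|>\lambda^{-1/2}\}$. On the near region $\lambda y-w\in B(0,2)$, since $|\lambda y-w|\le\sqrt\lambda+|w|\le2$; there $W$ is Lipschitz, so $|W(\lambda y-w)|=|W(\lambda y-w)-W(0)|\lesssim\lambda|y|+|w|$, and the contribution is $\lesssim\lambda^2(\lambda+|w|)\le\lambda^{1+\kappa}(\lambda+|w|)$. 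On the far region the weight obeys $Q(y)e^{\epsilon'|y|}\lesssim e^{-c\lambda^{-1/2}}$, beating every power of $\lambda$; writing $W=W_{p_1}+W_\infty$ and using H\"older for the $L^{p_1}$ part (yielding $\lambda^{2-N/p_1}$ times the $L^{\sigma_1}$-norm of the weight over $\{|y|>\lambda^{-1/2}\}$, the latter $\lesssim e^{-c'\lambda^{-1/2}}$) and $\|W_\infty\|_\infty$ for the bounded part, this contribution is $\lesssim\lambda^{2-N/p_1}e^{-c'\lambda^{-1/2}}+\lambda^2e^{-c\lambda^{-1/2}}$, which is negligible. The gradient term for \eqref{W2-1} uses only $\nabla W\in L^{p_2}+X(1+|\cdot|)$ and is identical to the $\nabla W$-estimate of the previous paragraph.

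The main obstacle is precisely the $L^{p_1}$-part of $W$ under \eqref{W2-1}: a direct global H\"older bound gives only $\lambda^{2-N/p_1}$, and since $\kappa\le2-N/p_1$ guarantees merely $2-N/p_1\ge\kappa$ (not $\ge1+\kappa$), this neither reaches $\lambda^{1+\kappa}$ nor produces the extra $(\lambda+|w|)$. This is exactly what forces the near/far decomposition: localization confines the crude integrability bound to a region where the exponential decay of $Q$ renders it harmless, while the genuine smallness $(\lambda+|w|)$ is generated near the origin from $W(0)=0$ and Lipschitz continuity. Everything else is the book-keeping verifying that the exponents $2-N/p_1$, $1-N/p_2$, $r$, $r'$ combine, via the elementary power inequalities above, into precisely the stated $\kappa$.
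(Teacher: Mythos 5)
Your proposal is correct, and for the \eqref{W1}- and \eqref{W2-2}-summands it follows the same route as the paper: both reduce $\|e^{\epsilon'|\cdot|}\Psi\|_{H^1}$ to the two weighted quantities $\lambda^2\|W(\lambda\cdot-w)\,Qe^{\epsilon'|\cdot|}\|_2$ and $\lambda^3\|(\nabla W)(\lambda\cdot-w)\,Qe^{\epsilon'|\cdot|}\|_2$ (via $|\nabla Q|\lesssim Q$), then use the pointwise bounds $W_1\in X(|\cdot|+|\cdot|^2)$, $\nabla W_1\in X(1+|\cdot|)$, resp. $W_{22}\in X(|\cdot|^{r'}e^{C|\cdot|})$, together with the H\"older/scaling bound $\lambda^{-N/p_2}$ for the $L^{p_2}$-part of $\nabla W$; your elementary power inequalities matching the exponents to $\kappa$ are the same as (and slightly more carefully justified than) the paper's. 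Where you genuinely diverge is the locally Lipschitz summand \eqref{W2}+\eqref{W2-1}: the paper never splits space, but instead writes $W_{21}(\lambda y-w)=\int_0^1(\lambda y-w)\cdot\nabla W_{21}(\tau(\lambda y-w))\,d\tau$, so the smallness factor $(\lambda|y|+|w|)$ is extracted globally and the remaining integrand is estimated through $\nabla W_{21}\in L^{p_2}(\mathbb{R}^N)+X(1+|\cdot|)$ (the change of variables producing $\tau^{-N/p_2}$, integrable precisely because $p_2>N$), giving $\lambda^{2-\frac{N}{p_2}}(\lambda+|w|)+\lambda^{2}(\lambda+|w|)$. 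You instead split at $|y|=\lambda^{-1/2}$, generate the smallness near the origin from $W(0)=0$ and the Lipschitz bound on $B(0,2)$, and dispose of the far region by pitting the crude $L^{p_1}+L^\infty$ integrability against the exponential decay of $Qe^{\epsilon'|\cdot|}$. Both arguments are valid and both still need the gradient decomposition of \eqref{W2} for the first-order term $\lambda^3(\nabla W)(\lambda\cdot-w)Q$; the paper's FTC device is shorter and handles the zeroth- and first-order terms by one uniform mechanism, whereas your localization is more elementary (the zeroth-order estimate never invokes $\nabla W_{21}$, only Lipschitz continuity near the origin) and it makes explicit the obstruction --- that a direct global H\"older bound yields only $\lambda^{2-N/p_1}\geq\lambda^{\kappa}$, which is insufficient --- a point the paper's proof bypasses silently.
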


\begin{proof}
From the assumptions for $W$, we can write $W=W_1+W_2$ and $W_2=W_{21}+W_{22}$ using $W_1$, $W_2$, $W_{21}$, and $W_{22}$ satisfying \eqref{W1}, \eqref{W2}, \eqref{W2-1}, and \eqref{W2-2}, respectively.

Firstly, since $W_1\in X(|\cdot|+|\cdot|^2)$ and $\nabla W_1\in X(1+|\cdot|)$, we obtain
\begin{align*}
\|e^{\epsilon'|\cdot|}\lambda^2W_1(\lambda\cdot-w)Q\|_2&\lesssim\|e^{\epsilon'|\cdot|}\lambda^2(\lambda|\cdot|+\lambda^2|\cdot|^2+|w|)Q\|_2\lesssim \lambda^2(\lambda+|w|),\\
\|e^{\epsilon'|\cdot|}\lambda^3\nabla W_1(\lambda\cdot-w)Q\|_2&\lesssim\|e^{\epsilon'|\cdot|}\lambda^3(1+\lambda|\cdot|+|w|)Q\|_2\lesssim \lambda^3.
\end{align*}

Secondly, since
\[
W_{21}(\lambda y-w)=\int_0^1(\lambda y-w)\cdot\nabla W_{21}(\tau(\lambda y-w))d\tau,
\]
we obtain
\begin{align*}
\|e^{\epsilon'|\cdot|}\lambda^2W_{21}(\lambda\cdot-w)Q\|_2&\lesssim \lambda^{2-\frac{N}{p_2}}(\lambda+|w|)+\lambda^2(\lambda+|w|),\\
\|e^{\epsilon'|\cdot|}\lambda^3\nabla W_{21}(\lambda\cdot-w)Q\|_2&\lesssim \lambda^{3-\frac{N}{p_2}}+\lambda^3.
\end{align*}

Finally,
\begin{align*}
\|e^{\epsilon'|\cdot|}\lambda^2W_{22}(\lambda\cdot-w)Q\|_2&\lesssim \|e^{\epsilon'|\cdot|}\lambda^2(\lambda^r|\cdot|^r+|w|^r)e^{C(\lambda|\cdot|+|w|)}Q\|_2\lesssim\lambda^2(\lambda^r+|w|^r)\lesssim \lambda^{1+r}(\lambda+|w|),\\
\|e^{\epsilon'|\cdot|}\lambda^3\nabla W_{21}(\lambda\cdot-w)Q\|_2&\lesssim \lambda^{3-\frac{N}{p_2}}.
\end{align*}
\end{proof}

\begin{remark*}
This estimate holds true even if $Q$ is replaced by $|\cdot|^2$, $\rho$, etc.
\end{remark*}

Furthermore, direct calculations yield the following properties:

\begin{proposition}
\label{profileprop}
Let
\[
Q_{\lambda,b,w,\gamma}(x):=\frac{1}{\lambda^\frac{N}{2}}Q\left(\frac{x+w}{\lambda}\right)e^{-i\frac{b}{4}\frac{|x+w|^2}{\lambda^2}+i\gamma}.
\]
Then
\[
\left|8E(Q_{\lambda,b,w,\gamma})-\frac{b^2}{\lambda^2}\|yQ\|_2^2\right|\lesssim\frac{\lambda^{2+\kappa}+|w|^{2+\kappa}}{\lambda^2}.
\]
holds for $0<\lambda\ll1$ and $w\in\mathbb{R}^N$ such that $|w|\leq 1$, where $y=\frac{x+w}{\lambda}$.

Moreover, if $s\mapsto(\lambda(s),b(s),w(s))$ is $C^1$-function,
\[
\left|\frac{d}{ds}E(Q_{\lambda,b,w,\gamma})\right|\lesssim\frac{1}{\lambda^2}\left(\left(\lambda^{1+\kappa}+|b|+|w|^{1+\kappa}\right)\left(\left|\frac{1}{\lambda}\frac{\partial\lambda}{\partial s}+b\right|+\left|\frac{\partial b}{\partial s}+b^2\right|+\left|\frac{\partial w}{\partial s}\right|\right)+|b|(\lambda^{2+\kappa}+|w|^{2+\kappa})\right)
\]
holds.
\end{proposition}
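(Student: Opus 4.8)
The plan is to evaluate $E(Q_{\lambda,b,w,\gamma})$ explicitly through the change of variables $y=(x+w)/\lambda$ and to read both estimates off the resulting formula. Since the constant phase $e^{i\gamma}$ affects neither $|u|$ nor $|\nabla u|$, the energy is a function of $(\lambda,b,w)$ alone, and a direct computation of its three terms gives a kinetic part $\frac{1}{2\lambda^2}\bigl(\|\nabla Q\|_2^2+\frac{b^2}{4}\|yQ\|_2^2\bigr)$, a nonlinear part $-\frac{1}{2+\frac{4}{N}}\frac{1}{\lambda^2}\int g(\lambda y-w)Q^{2+\frac{4}{N}}\,dy$, and a potential part $\frac{1}{2}\int W(\lambda y-w)Q^2\,dy$. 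The decisive point is that, after splitting $g=1+(g-1)$, the two $\lambda^{-2}$-singular pieces assemble into $\frac{1}{\lambda^2}E_{\text{crit}}(Q)=0$ and cancel, leaving
\[
E(Q_{\lambda,b,w,\gamma})=\frac{b^2}{8\lambda^2}\|yQ\|_2^2-\frac{1}{2+\frac{4}{N}}\frac{1}{\lambda^2}\int(g(\lambda y-w)-1)Q^{2+\frac{4}{N}}\,dy+\frac{1}{2}\int W(\lambda y-w)Q^2\,dy=:T_1+T_2+T_3,
\]
where $T_1$ is the conformal term, $T_2$ the inhomogeneity term and $T_3$ the potential term.

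Since $8T_1=\frac{b^2}{\lambda^2}\|yQ\|_2^2$, the first assertion amounts to bounding $8(T_2+T_3)$. For $T_2$ I use \eqref{G2} (so $|g(x)-1|\lesssim|x|^{2+r}$ for $|x|\le1$, with the far region $|\lambda y-w|>1$ absorbed by the decay of $Q$ and, where $|w|$ is not small, directly by $|w|^{2+\kappa}$) together with $\kappa\le r$ to get $|T_2|\lesssim\lambda^{-2}(\lambda^{2+\kappa}+|w|^{2+\kappa})$. For $T_3$ I write $\int W(\lambda y-w)Q^2\,dy=\frac{1}{\lambda^2}\int\Psi Q\,dy$ and apply Cauchy--Schwarz with Proposition \ref{Psiesti}, obtaining $|T_3|\lesssim\lambda^{-2}\lambda^{1+\kappa}(\lambda+|w|)$; the mixed term $\lambda^{1+\kappa}|w|$ is then absorbed using $\lambda^{1+\kappa}|w|\lesssim\lambda^{2+\kappa}+|w|^{2+\kappa}$ (the quotient being $0$-homogeneous, hence bounded). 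This gives the first inequality.

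For the derivative I differentiate along the $C^1$ path — differentiation under the integral being justified by the decay of $Q$ and the local integrability of $\nabla g,\nabla W$ — and regroup with $\lambda_s=\lambda\bigl(\frac{1}{\lambda}\lambda_s+b\bigr)-\lambda b$ and $b_s=(b_s+b^2)-b^2$ (writing $\lambda_s,b_s,w_s$ for the $s$-derivatives and $\partial_\lambda,\partial_b,\nabla_w$ for the derivatives in $(\lambda,b,w)$):
\[
\frac{d}{ds}E=\lambda\,\partial_\lambda E\Bigl(\tfrac{1}{\lambda}\lambda_s+b\Bigr)+\partial_bE\,(b_s+b^2)+\nabla_wE\cdot w_s-\lambda b\,\partial_\lambda E-b^2\,\partial_bE.
\]
The key cancellation is that the conformal term contributes $-\lambda b\,\partial_\lambda T_1-b^2\partial_bT_1=\frac{b^3}{4\lambda^2}\|yQ\|_2^2-\frac{b^3}{4\lambda^2}\|yQ\|_2^2=0$ to the last two terms, so only $-\lambda b(\partial_\lambda T_2+\partial_\lambda T_3)$ remains there, which is precisely the standalone $\frac{|b|}{\lambda^2}(\lambda^{2+\kappa}+|w|^{2+\kappa})$ contribution of the claim. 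The coefficients $\lambda\partial_\lambda E$, $\partial_bE$, $\nabla_wE$ of the small factors are each bounded by $\frac{1}{\lambda^2}(\lambda^{1+\kappa}+|b|+|w|^{1+\kappa})$: the $T_1$-derivatives yield $|b|/\lambda^2$, while $\partial_\lambda T_3$ and $\nabla_wT_3$ are bounded by $\lambda^{-N/p_2}$ via Cauchy--Schwarz against $Q$ (respectively $|\cdot|Q$) using the $\nabla W$-estimates already proved in Proposition \ref{Psiesti}, which suffices because $\kappa\le1-\frac{N}{p_2}$.

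The main obstacle is the term $-\lambda b\,\partial_\lambda T_2$: the naive estimate of $\int\bigl(\nabla g(\lambda y-w)\cdot y\bigr)Q^{2+\frac{4}{N}}\,dy$ gives only $\lambda^{1+\kappa}+|w|^{1+\kappa}$, which through $-\lambda b\,\partial_\lambda T_2$ produces a contribution of size $\lambda^{-1}|b||w|^{1+\kappa}$ that cannot be absorbed into $\lambda^{-2}|b|(\lambda^{2+\kappa}+|w|^{2+\kappa})$ unless $\lambda\lesssim|w|$. I would remove it by integration by parts: using $y\cdot\nabla_y[g(\lambda y-w)]=\lambda\,\nabla g(\lambda y-w)\cdot y$ and $\int(N+y\cdot\nabla)Q^{2+\frac{4}{N}}\,dy=0$ to insert the constant $-1$,
\[
\int\bigl(\nabla g(\lambda y-w)\cdot y\bigr)Q^{2+\frac{4}{N}}\,dy=-\frac{1}{\lambda}\int\bigl(g(\lambda y-w)-1\bigr)(N+y\cdot\nabla)Q^{2+\frac{4}{N}}\,dy,
\]
so that \eqref{G2} gives the sharp bound $|\partial_\lambda T_2|\lesssim\lambda^{-3}(\lambda^{2+\kappa}+|w|^{2+\kappa})$. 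With this in hand, every term fits the asserted bound after the elementary comparisons $\lambda^{2+\kappa}\le\lambda^{1+\kappa}$, $|w|^{2+\kappa}\le|w|^{1+\kappa}$ and $|b|^2\lesssim|b|$, and the estimate closes.
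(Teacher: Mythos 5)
Your proof is correct and is essentially the paper's own (unwritten) argument: the paper justifies this proposition only with the phrase ``direct calculations,'' and your computation --- the change of variables $y=(x+w)/\lambda$, the cancellation of the $\lambda^{-2}$-singular part via $E_{\text{crit}}(Q)=0$, the bound on the inhomogeneity term via \eqref{G2} with the far region handled by the decay of $Q$, the bound on the potential term by Cauchy--Schwarz against Proposition \ref{Psiesti}, and the scaling cancellation $-\lambda b\,\partial_\lambda T_1-b^2\,\partial_b T_1=0$ --- is precisely that calculation. One correction that does not affect validity: your ``main obstacle'' is spurious, since Young's inequality with exponents $2+\kappa$ and $\tfrac{2+\kappa}{1+\kappa}$ gives $\lambda|w|^{1+\kappa}\le\lambda^{2+\kappa}+|w|^{2+\kappa}$ unconditionally (no relation between $\lambda$ and $|w|$ is needed), so the naive bound $|\partial_\lambda T_2|\lesssim\lambda^{-2}\bigl(\lambda^{-1}(\lambda^{2+\kappa}+|w|^{2+\kappa})+\lambda^{1+\kappa}+|w|^{1+\kappa}\bigr)$ already closes the estimate; your integration-by-parts detour is correct and yields a sharper bound, but it is unnecessary.
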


At the end of this section, we state the following standard result. For the proof, see \cite{MRUPB}.

\begin{lemma}[Decomposition]
\label{decomposition}
There exists $\overline{C}>0$ such that the following statement holds. Let $I$ be an interval and $\delta>0$ be sufficiently small. We assume that $u\in C(I,H^1(\mathbb{R}^N))\cap C^1(I,\Sigma^{-1})$ satisfies
\[
\forall\ t\in I,\ \left\|\lambda(t)^{\frac{N}{2}}u\left(t,\lambda(t)y-w(t)\right)e^{i\gamma(t)}-Q\right\|_{H^1}< \delta
\]
for some functions $\lambda:I\rightarrow(0,\infty)$, $\gamma:I\rightarrow\mathbb{R}$, and $w:I\rightarrow\mathbb{R}^N$. Then there exist unique functions $\tilde{\lambda}:I\rightarrow(0,\infty)$, $\tilde{b}:I\rightarrow\mathbb{R}$, $\tilde{\gamma}:I\rightarrow\mathbb{R}\slash 2\pi\mathbb{Z}$, and $\tilde{w}:I\rightarrow\mathbb{R}^N$ such that 
\begin{align}
\label{mod}
&u(t,x)=\frac{1}{\tilde{\lambda}(t)^{\frac{N}{2}}}\left(Q+\tilde{\varepsilon}\right)\left(t,\frac{x+\tilde{w}(t)}{\tilde{\lambda}(t)}\right)e^{-i\frac{\tilde{b}(t)}{4}\frac{|x+\tilde{w}(t)|^2}{\tilde{\lambda}(t)^2}+i\tilde{\gamma}(t)},\\
&\left|\frac{\tilde{\lambda}(t)}{\lambda(t)}-1\right|+\left|\tilde{b}(t)\right|+\left|\tilde{\gamma}(t)-\gamma(t)\right|_{\mathbb{R}\slash 2\pi\mathbb{Z}}+\left|\frac{\tilde{w}(t)-w(t)}{\tilde{\lambda}(t)}\right|<\overline{C}\nonumber
\end{align}
hold, where $|\cdot|_{\mathbb{R}\slash 2\pi\mathbb{Z}}$ is defined by
\[
|c|_{\mathbb{R}\slash 2\pi\mathbb{Z}}:=\inf_{m\in\mathbb{Z}}|c+2\pi m|,
\]
and that $\tilde{\varepsilon}$ satisfies the orthogonal conditions
\begin{align}
\label{orthocondi}
\left(\tilde{\varepsilon},i\Lambda Q\right)_2=\left(\tilde{\varepsilon},|y|^2Q\right)_2=\left(\tilde{\varepsilon},i\rho\right)_2=0,\quad \left(\tilde{\varepsilon},yQ\right)_2=0
\end{align}
on $I$. In particular, $\tilde{\lambda}$, $\tilde{b}$, $\tilde{\gamma}$, and $\tilde{w}$ are $C^1$ functions and independent of $\lambda$, $\gamma$, and $w$.
\end{lemma}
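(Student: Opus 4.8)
The plan is to prove this by the implicit function theorem (the \emph{modulation} method), reading the $N+3$ orthogonality conditions in \eqref{orthocondi} as $N+3$ scalar equations to be solved for the $N+3$ modulation parameters $(\tilde\lambda,\tilde b,\tilde\gamma,\tilde w)$.

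Fix $t$ and write $v:=\lambda^{N/2}u(\lambda y-w)e^{i\gamma}$, so the hypothesis reads $\|v-Q\|_{H^1}<\delta$. For small relative parameters $p=(\mu,\beta,\theta,\nu)\in\mathbb{R}\times\mathbb{R}\times\mathbb{R}\times\mathbb{R}^N$ I would set
\[
\varepsilon_p[v](y):=(1+\mu)^{N/2}v\bigl((1+\mu)y-\nu\bigr)e^{i\frac{\beta}{4}|y|^2-i\theta}-Q(y),
\]
which corresponds, via $\tilde\lambda=(1+\mu)\lambda$, $\tilde b=\beta$, $\tilde w=w+\lambda\nu$ and a phase shift $\tilde\gamma$, to the decomposition \eqref{mod} with $\tilde\varepsilon=\varepsilon_p[v]$. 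Then I would define
\[
\Phi(v,p):=\Bigl((\varepsilon_p[v],i\Lambda Q)_2,\ (\varepsilon_p[v],|y|^2Q)_2,\ (\varepsilon_p[v],i\rho)_2,\ (\varepsilon_p[v],yQ)_2\Bigr)\in\mathbb{R}^{N+3},
\]
and note $\Phi(Q,0)=0$ because $\varepsilon_0[Q]=0$.

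The key step is the invertibility of $D_p\Phi(Q,0)$. Differentiating at $p=0$, $v=Q$ gives $\partial_\mu\varepsilon=\Lambda Q$, $\partial_\beta\varepsilon=\tfrac14 i|y|^2Q$, $\partial_\theta\varepsilon=-iQ$, $\partial_{\nu_j}\varepsilon=-\partial_{y_j}Q$. Since $\Lambda Q$, $|y|^2Q$, $\rho$, $Q$, $\nabla Q$ are real while the functionals split into the ``imaginary'' pair $i\Lambda Q,i\rho$ and the ``real'' pair $|y|^2Q,yQ$, all pairings of type $(\text{real},i\cdot\text{real})_2$ and $(\text{imaginary},\text{real})_2$ vanish, so $D_p\Phi(Q,0)$ is block diagonal: $(\beta,\theta)$ couple only to $(i\Lambda Q,i\rho)$ and $(\mu,\nu)$ only to $(|y|^2Q,yQ)$. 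Using $\int|y|^2Q\,\Lambda Q=-\|yQ\|_2^2$ and $\int Q\,\Lambda Q=0$ (integration by parts), the relation $L_+\rho=|y|^2Q$ together with $L_+\Lambda Q=-2Q$ to obtain $\int Q\rho=\tfrac12\|yQ\|_2^2$, and the parity of $Q$ to kill the cross terms $\int\partial_{y_j}Q\,|y|^2Q$ and $\int\Lambda Q\,y_kQ$, each diagonal block is triangular/diagonal with nonzero determinant; hence $D_p\Phi(Q,0)$ is invertible. The implicit function theorem then yields a neighbourhood $\mathcal{U}\ni Q$ in $H^1$ and a $C^1$ (indeed smooth) map $v\mapsto p(v)$ with $\Phi(v,p(v))=0$ and $p(Q)=0$, unique in a fixed ball of parameter space; here one transfers the scaling and translation in $\varepsilon_p$ onto the Schwartz test functions, so that $\Phi$ is manifestly smooth in $p$ and continuous-linear in $v$. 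Choosing $\delta$ small enough that $\{v:\|v-Q\|_{H^1}<\delta\}\subset\mathcal{U}$ then produces the parameters pointwise and uniformly in $t$, with $\overline{C}$ governed by the size of $\mathcal{U}$; this gives both existence and local uniqueness.

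The remaining, more delicate, points are the $C^1$ regularity in $t$ and the independence of the output from $\lambda,\gamma,w$ (which carry \emph{no} assumed regularity). For these I would argue intrinsically: writing the orthogonality conditions directly in terms of $u$,
\[
\tilde\varepsilon(y)=\tilde\lambda^{N/2}u(\tilde\lambda y-\tilde w)e^{i\frac{\tilde b}{4}|y|^2-i\tilde\gamma}-Q(y),
\]
defines $\tilde\Phi(\tilde\lambda,\tilde b,\tilde\gamma,\tilde w;u)=0$ whose parameter-Jacobian is the same invertible matrix as above, up to harmless $\tilde\lambda$-factors. After a change of variables each component of $\tilde\Phi$ equals $\re\langle u,\phi\rangle$ for a Schwartz function $\phi\in\Sigma^1$ obtained by rescaling the corresponding test function, so for fixed parameters $t\mapsto\tilde\Phi$ is continuous, indeed $C^1$, by $u\in C(I,H^1)\cap C^1(I,\Sigma^{-1})$ and the $\Sigma^{-1}$--$\Sigma^1$ duality. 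Solving $\tilde\Phi=0$ by the implicit function theorem first gives continuity of the parameters in $t$, and then, since $\tilde\Phi$ is jointly $C^1$ once the parameters are known continuous, bootstraps to $C^1$ regularity; uniqueness of the solution of $\tilde\Phi=0$ in the ball of radius $\overline{C}$ (with $\tilde\gamma$ determined in $\mathbb{R}/2\pi\mathbb{Z}$) makes the parameters independent of the particular $\lambda,\gamma,w$. I expect the main obstacle to be precisely the non-degeneracy computation for $D_p\Phi(Q,0)$: it is where the four orthogonality directions $i\Lambda Q,|y|^2Q,i\rho,yQ$ must be correctly matched to the four symmetry directions and where the algebraic relations among $Q,\Lambda Q,\rho$ and the parity of $Q$ are essential; the $t$-regularity, though technical, then follows by the standard bootstrap just described.
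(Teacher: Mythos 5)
Your proposal is correct and is essentially the paper's proof: the paper gives no argument of its own but defers to the reference [MRUPB] (Merle--Rapha\"el), where this decomposition lemma is proved by exactly the implicit-function-theorem modulation argument you describe, including the real/imaginary block decoupling of the Jacobian and the nondegeneracy identities $\int |y|^2 Q\,\Lambda Q\,dy=-\|yQ\|_2^2$ and $(Q,\rho)_2=\tfrac12\|yQ\|_2^2$ coming from $L_+\rho=|y|^2Q$ and $L_+\Lambda Q=-2Q$. Your handling of the $C^1$-in-$t$ regularity via the $\Sigma^{-1}$--$\Sigma^1$ duality and of the independence from $(\lambda,\gamma,w)$ via local uniqueness also matches the standard treatment.
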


\section{Uniformity estimates for modulation terms}
\label{sec:uniesti}
From this section to Section \ref{sec:bootstrap}, we prepare lemmas for the proof of Theorem \ref{theorem:EMBS}.

For $s_1>0$, let $\lambda_1,b_1>0$ be defined by
\[
\lambda_1:=\sqrt{\frac{\|yQ\|_2^2}{8E_0}}{s_1}^{-1},\quad E(Q_{\lambda_1,b_1,0,0})=E_0.
\]

Let $u(t)$ be the solution for \eqref{NLS} with an initial value
\begin{align}
\label{initial}
u(t_1,x):=\frac{1}{{\lambda_1}^\frac{N}{2}}Q\left(\frac{x}{\lambda_1}\right)e^{-i\frac{b_1}{4}\frac{|x|^2}{{\lambda_1}^2}}.
\end{align}
Note that $u\in C((T_*,T^*),\Sigma^2(\mathbb{R}^N))$ and $|x|\nabla u\in C((T_*,T^*),L^2(\mathbb{R}^N))$. Moreover,
\[
\im\int_{\mathbb{R}^N}u(t_1,x)\nabla\overline{u}(t_1,x)dx=0
\]
holds.

Since $u$ satisfies the assumption in Lemma \ref{decomposition} in a neighbourhood of $t_1$, there exist decomposition parameters $\tilde{\lambda}_{t_1}$, $\tilde{b}_{t_1}$, $\tilde{\gamma}_{t_1}$, $\tilde{w}_{t_1}$, and $\tilde{\varepsilon}_{t_1}$ such that \eqref{mod} and \eqref{orthocondi} hold in the neighbourhood. We define the rescaled time $s_{t_1}$ by
\[
s_{t_1}(t):=s_1-\int_t^{t_1}\frac{1}{\tilde{\lambda}_{t_1}(\tau)^2}d\tau.
\]
Moreover, we define
\begin{align*}
&t_{t_1}:=-\frac{\|yQ\|_2^2}{8E_0}{s_{t_1}}^{-1}, \quad\lambda_{t_1}(s):=\tilde{\lambda}_{t_1}(t_{t_1}(s)),\quad b_{t_1}(s):=\tilde{b}_{t_1}(t_{t_1}(s)),\\
&\gamma_{t_1}(s):=\tilde{\gamma}_{t_1}(t_{t_1}(s)),\quad w_{t_1}(s):=\tilde{w}_{t_1}(t_{t_1}(s)),\quad \varepsilon_{t_1}(s,y):=\tilde{\varepsilon}_{t_1}(t_{t_1}(s),y).
\end{align*}
For the sake of clarity in notation, we often omit the subscript $t_1$. Furthermore, let $I_{t_1}$ be the maximal interval of the existence of the decomposition such that \eqref{mod} and \eqref{orthocondi} hold and we define 
\[
J_{s_1}:=s_{t_1}\left(I_{t_1}\right).
\]
Additionally, let $s_0$ be sufficiently large, $s_1>s_0$, and
\[
s':=\max\left\{s_0,\inf J_{s_1}\right\}.
\]
In particular,
\begin{align}
\label{epsieq}
\Psi&=i\frac{\partial \varepsilon}{\partial s}+\Delta \varepsilon-\varepsilon+g(\lambda y-w)f\left(Q+\varepsilon\right)-f\left(Q\right)-\lambda^2 W(\lambda y-w)\varepsilon\\
&\hspace{30pt}-i\left(\frac{1}{\lambda}\frac{\partial \lambda}{\partial s}+b\right)\Lambda (Q+\varepsilon)+\left(1-\frac{\partial \gamma}{\partial s}\right)(Q+\varepsilon)+\left(\frac{\partial b}{\partial s}+b^2\right)\frac{|y|^2}{4}(Q+\varepsilon)\nonumber\\
&\hspace{50pt}-\left(\frac{1}{\lambda}\frac{\partial \lambda}{\partial s}+b\right)b\frac{|y|^2}{2}(Q+\varepsilon)+i\frac{1}{\lambda}\frac{\partial w}{\partial s}\cdot\nabla(Q+\varepsilon)+\frac{1}{2}\frac{b}{\lambda}\frac{\partial w}{\partial s}\cdot y(Q+\varepsilon)\nonumber
\end{align}
holds in $J_{s_{t_1}}$.

Let $L$ be defined by
\[
L:=1+\frac{\kappa}{2}
\]
Moreover, we define $s_*$ by
\[
s_*:=\inf\left\{\sigma\in(s',s_1]\ \middle|\ \text{\eqref{bootstrap} holds on }[\sigma,s_1]\right\},
\]
where
\begin{align}
\label{bootstrap}
\left\|\varepsilon(s)\right\|_{H^1}^2+b(s)^2\|y\varepsilon(s)\|_2^2<s^{-2L},\quad\left|\frac{\lambda(s)}{\lambda_{\app}(s)}-1\right|+\left|\frac{b(s)}{b_{\app}(s)}-1\right|<s^{-\frac{\kappa}{2}},\quad |w(s)|<s^{-(1+\frac{\kappa}{2})}.
\end{align}

Finally, we define
\[
\text{Mod}(s):=\left(\frac{1}{\lambda}\frac{\partial \lambda}{\partial s}+b,\frac{\partial b}{\partial s}+b^2,1-\frac{\partial \gamma}{\partial s},\frac{\partial w}{\partial s}\right).
\]
The goal of this section is to estimate of $\text{Mod}(s)$.

\begin{lemma}
For $s\in(s_*,s_1]$,
\begin{align}
\label{nablaortho}
\left|(\im\varepsilon(s),\nabla Q)_2\right|\lesssim s^{-(2L-1)}.
\end{align}
\end{lemma}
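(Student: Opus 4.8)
The plan is to relate the quantity $(\im\varepsilon,\nabla Q)_2$ to the physical momentum of the solution,
\[
P(u):=\im\int_{\mathbb{R}^N}u\,\nabla\overline{u}\,dx,
\]
and to exploit that the initial momentum $\im\int u(t_1)\nabla\overline{u}(t_1)\,dx=0$. First I would insert the decomposition \eqref{mod} into $P(u)$ and change variables to $y=(x+w)/\lambda$. The modulation phase $e^{-i\frac{b}{4}|y|^2+i\gamma}$ produces a contribution proportional to $b\int y\,|Q+\varepsilon|^2\,dy$, whose linear-in-$\varepsilon$ part is $b(\varepsilon,yQ)_2$ and hence vanishes by the orthogonality condition $(\varepsilon,yQ)_2=0$ in \eqref{orthocondi}; the quadratic piece $Q^2$ integrates to zero by parity. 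Collecting terms (using that $Q$ is real) leaves the identity
\[
2(\im\varepsilon,\nabla Q)_2=\lambda P(u)-\im\int_{\mathbb{R}^N}\varepsilon\,\nabla\overline{\varepsilon}\,dy-\frac{b}{2}\int_{\mathbb{R}^N}y\,|\varepsilon|^2\,dy .
\]
Since $L=1+\tfrac{\kappa}{2}$, the target exponent is $2L-1=1+\kappa$, so it suffices to bound each term on the right by $s^{-(1+\kappa)}$.

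The two quadratic-in-$\varepsilon$ terms are handled directly by the bootstrap \eqref{bootstrap}: one has $|\im\int\varepsilon\,\nabla\overline{\varepsilon}|\le\|\varepsilon\|_2\|\nabla\varepsilon\|_2\le\|\varepsilon\|_{H^1}^2<s^{-2L}$ and $|\tfrac{b}{2}\int y|\varepsilon|^2|\le\tfrac12|b|\,\|y\varepsilon\|_2\|\varepsilon\|_2<s^{-2L}$, the latter using $b^2\|y\varepsilon\|_2^2<s^{-2L}$ together with $\|\varepsilon\|_2<s^{-L}$. Both are $\le s^{-(2L-1)}$ for $s$ large, so they are negligible.

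For the main term $\lambda P(u)$ I would use the evolution of the momentum. Differentiating $P$ in $t$ and substituting \eqref{NLS}, the Laplacian term integrates to zero and one obtains
\[
\frac{d}{dt}P(u)=-2\int_{\mathbb{R}^N}\nabla g\,F(u)\,dx+\int_{\mathbb{R}^N}\nabla W\,|u|^2\,dx ,
\]
which is justified since $u\in\Sigma^2$ with $|x|\nabla u\in L^2$. As the initial momentum vanishes, integrating and passing to rescaled time via $dt=\lambda^2\,ds$ gives $P(u)(s)=-\int_s^{s_1}\lambda^2\frac{dP}{dt}\,d\sigma$. Rescaling the flux to the $y$ variable, the $g$-term is $-2\int\nabla g(\lambda y-w)F(Q+\varepsilon)\,dy$ and the $W$-term is $\lambda^2\int\nabla W(\lambda y-w)|Q+\varepsilon|^2\,dy$. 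By \eqref{G2} one has $|\nabla g(\lambda y-w)|\lesssim(\lambda|y|+|w|)^{1+r}$ near the origin, and the exponential decay of the profile controls the far region, bounding the leading $g$-contribution by $\lambda^{1+r}+|w|^{1+r}$; the $W$-contribution is bounded, via the estimates behind Proposition \ref{Psiesti} and its remark, by $\lambda^{2-N/p_2}+\lambda^2$; the $\varepsilon$-cross terms are strictly smaller because $\|\varepsilon\|_{H^1}\ll1$. Using $\lambda\sim s^{-1}$, $|w|\lesssim s^{-(1+\kappa/2)}$ from \eqref{bootstrap} and the definition of $\kappa$, all of these are $\lesssim s^{-(1+\kappa)}$, whence $|\lambda^2\frac{dP}{dt}|\lesssim s^{-(1+\kappa)}$. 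Integrating in $\sigma$ gives $|P(u)(s)|\lesssim s^{-\kappa}$, so $\lambda|P(u)|\lesssim s^{-1}\cdot s^{-\kappa}=s^{-(2L-1)}$, which is the claim.

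The main obstacle is precisely this momentum-flux estimate: extracting the sharp power $s^{-(1+\kappa)}$ from $\int\nabla g\,F(u)$ and $\lambda^2\int\nabla W|u|^2$ using only the weak, growth-type hypotheses \eqref{G1}, \eqref{G2} and \eqref{W2} instead of pointwise smoothness. One must check that the low-regularity and growing parts of $\nabla W$ (split according to $L^{p_2}+X(1+|\cdot|)$) and the $\varepsilon$-cross terms do not produce a worse power, which is exactly where the value of $\kappa$ and the exponential weight of Proposition \ref{Psiesti} enter; verifying $|w|^{1+r}\lesssim s^{-(1+\kappa)}$ also quietly uses $r\ge\kappa$.
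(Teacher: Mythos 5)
Your proposal is correct and takes essentially the same route as the paper: the paper's proof likewise differentiates the momentum $\im\int_{\mathbb{R}^N}u\nabla\overline{u}\,dx$ in time, bounds the resulting $\nabla g$ and $\nabla W$ flux terms in rescaled variables by $s^{-(1+\kappa)}$ (splitting $W$ exactly as you do and using the estimates behind Proposition \ref{Psiesti}), integrates from the zero-momentum initial time $t_1$ to get $\left|\im\int u\nabla\overline{u}\right|\lesssim s^{-\kappa}$, and then passes to $(\im\varepsilon,\nabla Q)_2$ through the decomposition and the orthogonality $(\varepsilon,yQ)_2=0$, citing \cite[Lemma 3.2]{NP} for the identity you derive explicitly. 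The only discrepancy is the harmless constant in the momentum-flux law (your coefficient on $\int\nabla W|u|^2\,dx$ versus the paper's $\tfrac12$), which does not affect the estimates since only absolute values are used.
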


\begin{proof}
According to a direct calculation, we have
\[
\frac{d}{dt}\im\int_{\mathbb{R}^N}u(t,x)\nabla\overline{u}(t,x)dx=\int_{\mathbb{R}^N}\left(-\frac{1}{1+\frac{2}{N}}\nabla g(x)|u(t,x)|^{2+\frac{4}{N}}+\frac{1}{2}\nabla W(x)|u(t,x)|^2\right)dx.
\]
Since
\begin{align*}
\left|\lambda^2\int_{\mathbb{R}^N}\nabla g(x)|u(t(s),x)|^{2+\frac{4}{N}}dx\right|&=\left|\int_{\mathbb{R}^N}\nabla g(\lambda y-w)|Q(y)+\varepsilon(s,y)|^{2+\frac{4}{N}}dy\right|\lesssim \lambda^{1+\kappa}+|w|^{1+\kappa}+\|\varepsilon\|_{2+\frac{4}{N}}^{2+\frac{4}{N}},\\
\lambda^2\int_{\mathbb{R}^N}\nabla W(x)|u(t(s),x)|^2dx&=\lambda^2\int_{\mathbb{R}^N}\nabla W(\lambda y-w)|Q(y)+\varepsilon(s,y)|^2dy,\\
\left|\lambda^2\int_{\mathbb{R}^N}\nabla W(\lambda y-w)Q(y)^2dy\right|&\lesssim\frac{1}{\lambda}\|\Psi\|_{H^1},\\
\left|\lambda^2\int_{\mathbb{R}^N}\nabla W_1(\lambda y-w)|\varepsilon(s,y)|^2dy\right|&\lesssim\lambda^2\|\varepsilon\|_2(\|\varepsilon\|_2+b\|y\varepsilon\|_2),\\
\left|\lambda^2\int_{\mathbb{R}^N}\nabla W_2(\lambda y-w)|\varepsilon(s,y)|^2dy\right|&\lesssim\lambda^{2-\frac{N}{p_2}}\|\varepsilon\|_{H^1}^2+\lambda^2\|\varepsilon\|_2(\|\varepsilon\|_2+b\|y\varepsilon\|_2),
\end{align*}
we obtain
\[
\left|\frac{d}{ds}\im\int_{\mathbb{R}^N}u(t(s),x)\nabla\overline{u}(t(s),x)dx\right|\lesssim \lambda^2\left|\frac{d}{dt}\im\int_{\mathbb{R}^N}u(t,x)\nabla\overline{u}(t,x)dx\right|\lesssim s^{-(1+\kappa)}.
\]
Therefore, we obtain
\[
\left|\im\int_{\mathbb{R}^N}u(t(s),x)\nabla\overline{u}(t(s),x)dx\right|\lesssim s^{-\kappa}\lesssim s^{-2(L-1)}.
\]

The rest is shown in the same way as in \cite[Lemma 3.2]{NP}.
\end{proof}

\begin{lemma}[Estimation of modulation terms]
For $s\in(s_*,s_1]$,
\begin{align}
\label{ortho}
(\varepsilon(s),Q)_2&=-\frac{1}{2}\left\|\varepsilon(s)\right\|_2^2,\\
\label{modesti}
\left|\text{Mod}(s)\right|&\lesssim s^{-2L}
\end{align}
holds.
\end{lemma}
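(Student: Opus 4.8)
The identity \eqref{ortho} I would obtain immediately from the conservation of mass. The $L^2$-norm is invariant under the scaling, phase, and translation appearing in the decomposition \eqref{mod}, so $\|u(t)\|_2=\|Q+\varepsilon(s)\|_2$; combined with the critical-mass normalisation $\|u\|_2=\|Q\|_2$ this forces $\|Q+\varepsilon\|_2=\|Q\|_2$, and expanding $\|Q+\varepsilon\|_2^2=\|Q\|_2^2+2(\varepsilon,Q)_2+\|\varepsilon\|_2^2$ gives $(\varepsilon,Q)_2=-\tfrac12\|\varepsilon\|_2^2$.

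For \eqref{modesti} the plan is to run the standard modulation argument. Each condition in \eqref{orthocondi} is an identity in $s$ against an $s$-independent profile, so differentiating yields $(\partial_s\varepsilon,i\Lambda Q)_2=(\partial_s\varepsilon,|y|^2Q)_2=(\partial_s\varepsilon,i\rho)_2=0$ and $(\partial_s\varepsilon,yQ)_2=0$. I would solve \eqref{epsieq} for $i\partial_s\varepsilon$ and insert it into these four relations; since $(\partial_s\varepsilon,\Theta)_2=\im\int(i\partial_s\varepsilon)\overline{\Theta}$, each relation becomes an identity for the imaginary (resp.\ real) part of the corresponding inner product of the right-hand side of \eqref{epsieq}. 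Splitting $gf(Q+\varepsilon)-f(Q)=df(Q)\varepsilon+(g-1)f(Q+\varepsilon)+\bigl(f(Q+\varepsilon)-f(Q)-df(Q)\varepsilon\bigr)$ and using $-\Delta\varepsilon+\varepsilon-df(Q)\varepsilon=L_+\re\varepsilon+iL_-\im\varepsilon$, I would move $L_\pm$ onto the real, exponentially decaying profiles by self-adjointness and apply the identities $L_-Q=0$, $L_+\Lambda Q=-2Q$, $L_-|y|^2Q=-4\Lambda Q$, $L_+\rho=|y|^2Q$, $L_-yQ=-\nabla Q$, $L_+\nabla Q=0$. These are chosen precisely so that the real/imaginary structure decouples the four pairings into a linear system $A\,\Mod(s)=R(s)$, where $A$ is a fixed matrix that is nondegenerate because its leading coefficients reduce to $\|\Lambda Q\|_2^2$, $(\rho,|y|^2Q)_2$, $\|yQ\|_2^2$, and the like.

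The remainder $R(s)$ collects everything else, and the task is to verify $R(s)=O(s^{-2L})$ with $2L=2+\kappa$. The source $\Psi$ contributes, via Proposition \ref{Psiesti}, at most $\|e^{\epsilon'|y|}\Psi\|_{H^1}\lesssim\lambda^{1+\kappa}(\lambda+|w|)\lesssim s^{-(2+\kappa)}$ under \eqref{bootstrap}; the nonlinear remainder is $O(\|\varepsilon\|_{H^1}^2)=O(s^{-2L})$; the defect $(g-1)f(Q+\varepsilon)$ is bounded by \eqref{G2} by $(\lambda+|w|)^{2+r}\lesssim s^{-(2+r)}\le s^{-2L}$ once integrated against the decaying profiles; and $\lambda^2W\varepsilon$ is handled by the $\Psi$-type bound (valid with $Q$ replaced by $\rho$, $|y|^2Q$, etc., by the Remark following Proposition \ref{Psiesti}) times $\|\varepsilon\|_2$. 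The cross terms in \eqref{epsieq} coupling $\Mod$ to $b$ or to $\varepsilon$ are of size $|\Mod|(|b|+\|\varepsilon\|_{H^1})=|\Mod|\,O(s^{-1})$; I would move these to the left-hand side, so $A$ is only perturbed by an $O(s^{-1})$ matrix and stays invertible on $(s_*,s_1]$. Inverting then gives $|\Mod(s)|\lesssim s^{-2L}$.

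The delicate point, which I expect to be the main obstacle, is the $w$-component. Differentiating $(\varepsilon,yQ)_2=0$ produces, through $L_-yQ=-\nabla Q$, the term $(\im\varepsilon,\nabla Q)_2$, which is \emph{not} an orthogonality direction and is only $o(1)$ a priori; moreover the modulation term $i\lambda^{-1}(\partial_sw)\cdot\nabla(Q+\varepsilon)$ carries an extra factor $\lambda^{-1}\approx s$, so the corresponding row of $A$ is of size $\lambda^{-1}$. The balance closes only because \eqref{nablaortho} supplies the sharp bound $|(\im\varepsilon,\nabla Q)_2|\lesssim s^{-(2L-1)}$, coming from the separate conservation law for $\im\int u\,\nabla\overline{u}$; multiplying by $\lambda\approx s^{-1}$ returns exactly $|\partial_sw|\lesssim s^{-2L}$. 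Guaranteeing this compatibility between the $\lambda^{-1}$ scaling of the $w$-equation and the $s^{-(2L-1)}$ (rather than $s^{-2L}$) bound of \eqref{nablaortho}, while tracking the algebraic cancellations that keep every other remainder at or below $s^{-2L}$ and $A$ nondegenerate throughout, is where the bulk of the care is needed.
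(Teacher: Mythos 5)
Your proposal is correct and follows essentially the same route as the paper's proof: \eqref{ortho} by mass conservation, and \eqref{modesti} by pairing \eqref{epsieq} against the four orthogonality directions of \eqref{orthocondi}, using Proposition \ref{Psiesti}, the nonlinear and $(g-1)$ and $\lambda^2W\varepsilon$ estimates, and crucially \eqref{nablaortho} for the $w$-component, then absorbing the coupling terms. The paper compresses all of this into the single inequality $\left|\mathrm{Mod}(s)\right|\lesssim s^{-2L}+\epsilon\left|\mathrm{Mod}(s)\right|$ (deferring details to \cite[Lemma 4.1]{LMR}), which is precisely the $O(s^{-1})$-perturbed nondegenerate linear system you describe.
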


\begin{proof}
According to the mass conservation, we have
\[
\left(\varepsilon,Q\right)_2=\frac{1}{2}\left(\left\|u\right\|_2^2-\left\|Q\right\|_2^2-\left\|\varepsilon\right\|_2^2\right)=-\frac{1}{2}\left\|\varepsilon\right\|_2^2.
\]
meaning $(\ref{ortho})$ holds.

For $v=\Lambda Q$, $i|y|^2Q$, $\rho$, or $y_jQ$, the following estimates hold:
\begin{align*}
|(g(\lambda y-w)-1)f(Q+\varepsilon)||v|&\lesssim(\lambda^{2+\kappa}+|w|^{2+\kappa})(Q+|\varepsilon|)|v|^\frac{1}{2}\\
|f\left(Q+\varepsilon\right)-f\left(Q\right)-df(Q)(\varepsilon)||v|\lesssim |\varepsilon|^2,\\
|(\lambda^2 W(\lambda y-w)\varepsilon,v)_2|&\lesssim \lambda^{1+\kappa}\left(\lambda+|w|\right)\|\varepsilon\|_2.
\end{align*}
Therefore, according to orthogonal conditions \eqref{orthocondi}, Equation \eqref{epsieq}, Proposition \ref{Psiesti}, and \eqref{nablaortho}, we see that
\[
\left|\text{Mod}(s)\right|\lesssim s^{-2L}+\epsilon\left|\text{Mod}(s)\right|.
\]
For detail of the proof of the inequality, see \cite[Lemma 4.1]{LMR}. Consequently, we obtain \eqref{modesti}.
\end{proof}

\section{Modified energy function}
\label{sec:MEF}
In this section, we proceed with a modified version of the technique presented in Le Coz, Martel, and Rapha\"{e}l \cite{LMR} and Rapha\"{e}l and Szeftel \cite{RSEU}. Let $m$ and $\epsilon_j$ be defined by
\begin{align}
\label{epsidef}
m:=2+\frac{\kappa}{2},\quad \epsilon_1:=\frac{\kappa m\mu}{32},\quad \epsilon_3:=\min\left\{\frac{\mu}{24},\frac{\kappa^2\mu}{24\times 64}\right\},\quad \epsilon_4:=\min\left\{\frac{m\mu}{24},\frac{\kappa^2m\mu}{24\times 64}\right\},\quad \epsilon_5:=\frac{\kappa}{8}
\end{align}
where $\mu$ is from the coercivity \eqref{Lcoer} of $L_+$ and $L_-$. Moreover, we define
\begin{align*}
H(s,\varepsilon)&:=\frac{1}{2}\left\|\varepsilon\right\|_{H^1}^2+\frac{\epsilon_1b^2}{2}\left\|y\varepsilon\right\|_2^2-\int_{\mathbb{R}^N}g(\lambda y-w)\left(F(Q(y)+\varepsilon(y))-F(Q(y))-dF(Q(y))(\varepsilon(y))\right)dy\\
&\hspace{30pt}+\frac{1}{2}\lambda^2\int_{\mathbb{R}^N}W(y)|\varepsilon(y)|^2dy,\\
S(s,\varepsilon)&:=\frac{1}{\lambda^m}H(s,\varepsilon).
\end{align*}

\begin{lemma}[Coercivity of $H$]
\label{Hcoer}
For $s\in(s_*,s_1]$, 
\[
H(s,\varepsilon)\geq \frac{\mu}{2}\|\varepsilon\|_{H^1}^2+\frac{\epsilon_1}{2}b^2\left\|y\varepsilon\right\|_2^2-\epsilon_3\left(\|\varepsilon\|_{H^1}^2+b^2\left\|y\varepsilon\right\|_2^2\right)
\]
holds.
\end{lemma}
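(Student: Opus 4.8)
The plan is to recognise $H$ as a perturbation of the linearised energy $\tfrac12\bigl(\langle L_+\re\varepsilon,\re\varepsilon\rangle+\langle L_-\im\varepsilon,\im\varepsilon\rangle\bigr)$ and then run the coercivity estimate \eqref{Lcoer}. Since the term $\tfrac{\epsilon_1b^2}{2}\|y\varepsilon\|_2^2$ appears verbatim on the right-hand side of the claim, I would carry it through untouched and reduce to proving
\[
\frac12\|\varepsilon\|_{H^1}^2-\int_{\mathbb{R}^N}g(\lambda y-w)\bigl(F(Q+\varepsilon)-F(Q)-dF(Q)(\varepsilon)\bigr)dy+\frac12\lambda^2\int_{\mathbb{R}^N}W(\lambda y-w)|\varepsilon|^2dy\geq\frac{\mu}{2}\|\varepsilon\|_{H^1}^2-\epsilon_3\bigl(\|\varepsilon\|_{H^1}^2+b^2\|y\varepsilon\|_2^2\bigr),
\]
where the potential contribution is the one consistent with the term $-\lambda^2W(\lambda y-w)\varepsilon$ in \eqref{epsieq}. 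To expose the quadratic form I would Taylor-expand the nonlinear integrand as $F(Q+\varepsilon)-F(Q)-dF(Q)(\varepsilon)=\tfrac12\,d^2F(Q)(\varepsilon,\varepsilon)+R(\varepsilon)$ and use the pointwise identity $d^2F(Q)(\varepsilon,\varepsilon)=(1+\tfrac4N)Q^{\frac4N}(\re\varepsilon)^2+Q^{\frac4N}(\im\varepsilon)^2$ together with the definitions of $L_\pm$, so that after replacing $g(\lambda y-w)$ by $1$ the leading part equals $\tfrac12\bigl(\langle L_+\re\varepsilon,\re\varepsilon\rangle+\langle L_-\im\varepsilon,\im\varepsilon\rangle\bigr)$.

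Next I apply \eqref{Lcoer} with $u=\varepsilon$. The four correction terms there are dealt with as follows: the orthogonality conditions \eqref{orthocondi}, which read $(\re\varepsilon,|y|^2Q)_2=(\re\varepsilon,yQ)_2=(\im\varepsilon,\rho)_2=0$, annihilate three of them, while the surviving $(\re\varepsilon,Q)_2^2=(\varepsilon,Q)_2^2$ term is converted by \eqref{ortho} into the harmless quartic $\tfrac14\|\varepsilon\|_2^4$. This yields $\tfrac12\bigl(\langle L_+\re\varepsilon,\re\varepsilon\rangle+\langle L_-\im\varepsilon,\im\varepsilon\rangle\bigr)\geq\tfrac{\mu}{2}\|\varepsilon\|_{H^1}^2-\tfrac{1}{8\mu}\|\varepsilon\|_2^4$, and since $\|\varepsilon\|_{H^1}\lesssim s^{-L}$ is small on $(s_*,s_1]$ by \eqref{bootstrap}, the quartic is absorbed into $\epsilon_3\|\varepsilon\|_{H^1}^2$ once $s_0$ is large.

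It then remains to bound the three perturbative contributions by (small)$\cdot\bigl(\|\varepsilon\|_{H^1}^2+b^2\|y\varepsilon\|_2^2\bigr)$. First, the cubic-and-higher remainder satisfies $\int|R(\varepsilon)|\lesssim\|\varepsilon\|_{H^1}^3+\|\varepsilon\|_{H^1}^{2+\frac4N}=o(\|\varepsilon\|_{H^1}^2)$ by Sobolev and Gagliardo–Nirenberg. Second, for the inhomogeneity defect I would use the pointwise bound $0\leq F(Q+\varepsilon)-F(Q)-dF(Q)(\varepsilon)\lesssim Q^{\frac4N}|\varepsilon|^2+|\varepsilon|^{2+\frac4N}$ together with \eqref{G2} in the form $|g(\lambda y-w)-1|\lesssim(\lambda|y|+|w|)^{2+r}$ for $|\lambda y-w|\leq1$ and $|g-1|\lesssim1$ otherwise; the exponential decay of $Q$ beats the polynomial growth in $y$ on the inner region and makes the outer region exponentially small, giving an overall bound $\lesssim(\lambda^{2+r}+|w|^{2+r})\|\varepsilon\|_2^2+\|\varepsilon\|_{H^1}^{2+\frac4N}$. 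Third, the potential term is estimated via the weighted bounds underlying Proposition \ref{Psiesti} (whose remark allows $Q$ to be replaced by $|\cdot|^2$), splitting $W=W_1+W_{21}+W_{22}$ exactly as there. Each of these is below $\epsilon_3\bigl(\|\varepsilon\|_{H^1}^2+b^2\|y\varepsilon\|_2^2\bigr)$ for $s_0$ large, and combining everything gives the claim.

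I expect the potential term to be the main obstacle, precisely because this paper allows $W$ to grow at spatial infinity: when $W$ grows quadratically, $\lambda^2\int W(\lambda y-w)|\varepsilon|^2$ is a priori comparable to the weighted quantity $\|y\varepsilon\|_2^2$ that the claim must control. The argument only closes because the rescaling produces the extra factor $\lambda^2$ (so the effective weight is $\lambda^4|y|^2$, resp. a factor $\lambda^{2-N/p_1}$ for the $L^{p_1}$-part of $W$), which is strictly smaller than $b^2$ thanks to $b\approx\lambda\approx s^{-1}$ on the bootstrap interval. Keeping careful track of which powers of $\lambda$ survive for each piece $W_1,W_{21},W_{22}$, and checking that the cross terms of the form $\lambda^3\|y\varepsilon\|_2\|\varepsilon\|_2$ are likewise dominated by $\epsilon_3 b^2\|y\varepsilon\|_2^2+\epsilon_3\|\varepsilon\|_{H^1}^2$, is the delicate bookkeeping.
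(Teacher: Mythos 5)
Your proposal is correct and follows essentially the same route as the paper's proof: isolate the $L_\pm$ quadratic form from the Taylor expansion of $F$, apply the coercivity \eqref{Lcoer} together with the orthogonality conditions \eqref{orthocondi} and the mass identity \eqref{ortho} (the surviving $(\re\varepsilon,Q)_2^2$ term becoming the harmless quartic $\tfrac14\|\varepsilon\|_2^4$), and absorb the higher-order remainder, the $(g-1)$ defect, and the rescaled potential term into $\epsilon_3\bigl(\|\varepsilon\|_{H^1}^2+b^2\|y\varepsilon\|_2^2\bigr)$ for $s_0$ large. Your final bookkeeping of the potential contribution (the factor $\lambda^{2-N/p_1}$ for the $L^{p_1}$ part, the effective weight $\lambda^4|y|^2$ for the growing part, and the cross term $\lambda^3\|y\varepsilon\|_2\|\varepsilon\|_2$, all closed via $b\approx\lambda\approx s^{-1}$) is exactly the content of the paper's two displayed estimates for $W_1$ and $W_2$, so the argument is the same in substance.
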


\begin{proof}
Firstly, we have
\begin{align*}
\left|\lambda^2\int_{\mathbb{R}^N}W_1(\lambda y-w)|\varepsilon|^2dy\right|&\lesssim \lambda^2\left(\|\varepsilon\|_{H^1}^2+b^2\|y\varepsilon\|_2^2\right),\\
\left|\lambda^2\int_{\mathbb{R}^N}W_2(\lambda y-w)|\varepsilon|^2dy\right|&\lesssim \lambda^{2-\frac{N}{p_1}}\|\varepsilon\|_{H^1}^2+\lambda^2\|\varepsilon\|_2^2.
\end{align*}

Secondly,
\[
\left|\int_{\mathbb{R}^N}g(\lambda y-w)\left(F(Q+\varepsilon)-F(Q)-dF(Q)(\varepsilon)-\frac{1}{2}d^2F(Q)(\varepsilon,\varepsilon)\right)dy\right|\lesssim \|\varepsilon\|_{H^1}^3+\|\varepsilon\|_{H^1}^{2+\frac{4}{N}}.
\]

Thirdly,
\[
\left|\int_{\mathbb{R}^N}(g(\lambda y-w)-1)d^2F(Q)(\varepsilon,\varepsilon)dy\right|\lesssim s^{-(2+\kappa)}\|\varepsilon\|_{H^1}^2.
\]

Finally, from \eqref{Lcoer}, \eqref{orthocondi}, and \eqref{ortho} since
\[
\left\|\varepsilon\right\|_{H^1}^2-\int_{\mathbb{R}^N}d^2F(Q)(\varepsilon,\varepsilon)dy=\left(L_+\re\varepsilon,\re\varepsilon\right)_2+\left( L_-\im\varepsilon,\im\varepsilon\right)_2,
\]
we have
\[
H(s,\varepsilon)\geq\frac{\mu}{2}\|\varepsilon\|_{H^1}^2+\frac{\epsilon_1b^2}{2}\left\|y\varepsilon\right\|_2^2-\epsilon_3\left(\|\varepsilon\|_2^2+b^2\|y\varepsilon\|_2^2\right).
\]
Consequently, we obtain Lemma \ref{Hcoer} if $s_0$ is sufficiently large.
\end{proof}

\begin{corollary}[Estimation of $S$]
\label{Sesti}
For $s\in(s_*,s_1]$, 
\[
\frac{1}{\lambda^m}\left(\|\varepsilon\|_{H^1}^2+b^2\left\|y\varepsilon\right\|_2^2\right)\lesssim S(s,\varepsilon)\lesssim\frac{1}{\lambda^m}\left(\|\varepsilon\|_{H^1}^2+b^2\left\|y\varepsilon\right\|_2^2\right)
\]
holds.
\end{corollary}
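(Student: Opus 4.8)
The plan is to read the statement, via $S=\lambda^{-m}H$, as the two-sided bound $H(s,\varepsilon)\approx\|\varepsilon\|_{H^1}^2+b^2\|y\varepsilon\|_2^2$; the common factor $\lambda^{-m}$ cancels and plays no role. The lower bound is then essentially free. I would simply invoke Lemma \ref{Hcoer}, which already gives
\[
H(s,\varepsilon)\geq\left(\frac{\mu}{2}-\epsilon_3\right)\|\varepsilon\|_{H^1}^2+\left(\frac{\epsilon_1}{2}-\epsilon_3\right)b^2\|y\varepsilon\|_2^2,
\]
so the only point to check is that both coefficients are positive, uniformly in $s$. From \eqref{epsidef} one has $\epsilon_3\leq\mu/24<\mu/2$; and, using $m\geq2$ and $\kappa\leq1$, $\epsilon_1=\tfrac{\kappa m\mu}{32}\geq\tfrac{\kappa\mu}{16}$ while $\epsilon_3\leq\tfrac{\kappa^2\mu}{24\cdot64}\leq\tfrac{\kappa\mu}{1536}<\tfrac{\kappa\mu}{32}\leq\epsilon_1/2$. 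Hence $\lambda^{-m}\bigl(\|\varepsilon\|_{H^1}^2+b^2\|y\varepsilon\|_2^2\bigr)\lesssim S(s,\varepsilon)$.

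For the upper bound I would estimate the correction terms of $H$ from above, which are exactly the quantities already controlled in the proof of Lemma \ref{Hcoer}, only now read as upper bounds. Starting from
\[
H(s,\varepsilon)\leq\frac12\|\varepsilon\|_{H^1}^2+\frac{\epsilon_1 b^2}{2}\|y\varepsilon\|_2^2+\left|\int_{\mathbb{R}^N}g(\lambda y-w)\bigl(F(Q+\varepsilon)-F(Q)-dF(Q)(\varepsilon)\bigr)dy\right|+\frac12\left|\lambda^2\int_{\mathbb{R}^N}W(\lambda y-w)|\varepsilon|^2dy\right|,
\]
a Taylor expansion together with the Sobolev and Gagliardo--Nirenberg embeddings bounds the nonlinear term by $\lesssim\|\varepsilon\|_2^2+\|\varepsilon\|_{H^1}^{2+\frac4N}\lesssim\|\varepsilon\|_{H^1}^2$, the higher power being absorbed because $\|\varepsilon\|_{H^1}\lesssim s^{-L}\leq1$ on $(s_*,s_1]$ by \eqref{bootstrap}. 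Splitting $W=W_1+W_2$ as in Lemma \ref{Hcoer}, the potential term is $\lesssim\bigl(\lambda^2+\lambda^{2-\frac{N}{p_1}}\bigr)\bigl(\|\varepsilon\|_{H^1}^2+b^2\|y\varepsilon\|_2^2\bigr)$; since $2-\tfrac{N}{p_1}\geq\kappa>0$ and $0<\lambda\ll1$, both powers of $\lambda$ are bounded. Collecting these yields $H(s,\varepsilon)\lesssim\|\varepsilon\|_{H^1}^2+b^2\|y\varepsilon\|_2^2$, i.e. $S(s,\varepsilon)\lesssim\lambda^{-m}\bigl(\|\varepsilon\|_{H^1}^2+b^2\|y\varepsilon\|_2^2\bigr)$.

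I do not anticipate a genuine obstacle: this is a bookkeeping consequence of Lemma \ref{Hcoer}, whose proof already supplies every estimate needed in both directions. The only mildly delicate point is the numerical verification that $\frac{\mu}{2}-\epsilon_3$ and $\frac{\epsilon_1}{2}-\epsilon_3$ remain positive, which is precisely why $\epsilon_1$ and $\epsilon_3$ are calibrated against $\kappa$ and $\mu$ in \eqref{epsidef}; beyond that, it is the smallness $\lambda\ll1$ that renders the potential contribution a true perturbation rather than a competing term.
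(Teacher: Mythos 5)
Your proposal is correct and follows exactly the route the paper intends: the paper states this as an immediate corollary of Lemma \ref{Hcoer}, with the lower bound coming from the coercivity estimate (your numerical check that $\epsilon_3<\mu/2$ and $\epsilon_3<\epsilon_1/2$ via \eqref{epsidef} is the right verification) and the upper bound from re-reading the same nonlinear and potential estimates in the proof of Lemma \ref{Hcoer} as upper bounds, absorbing powers of $\lambda$ and the higher power of $\|\varepsilon\|_{H^1}$ by the bootstrap smallness.
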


\begin{lemma}
\label{Lambda}
For all $s\in(s_*,s_1]$, 
\begin{align}
\label{flambdaesti}
\left|\left(g(\lambda y-w)\left(f(Q+\varepsilon)-f(Q)\right),\Lambda \varepsilon\right)_2\right|&\lesssim \|\varepsilon\|_{H^1}^2,\\
\label{fdefesti}
\left|\left(g(\lambda y-w)\left(f(Q+\varepsilon)-f(Q)\right),\nabla \varepsilon\right)_2\right|&\lesssim \|\varepsilon\|_{H^1}^2,\\
\label{Vlambdaesti}
\left|\lambda^2\left(W(\lambda y-w)\varepsilon,\Lambda\varepsilon\right)_2\right|&\lesssim s^{-1}\left(\|\varepsilon\|_{H^1}^2+b^2\|y\varepsilon\|_2^2\right),\\
\label{Vdefesti}
\left|\lambda^2\left(W(\lambda y-w)\varepsilon,\nabla\varepsilon\right)_2\right|&\lesssim s^{-1}\left(\|\varepsilon\|_{H^1}^2+b^2\|y\varepsilon\|_2^2\right).
\end{align}
\end{lemma}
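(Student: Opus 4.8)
The plan is to handle the two nonlinear estimates \eqref{flambdaesti}--\eqref{fdefesti} and the two potential estimates \eqref{Vlambdaesti}--\eqref{Vdefesti} by two distinct mechanisms. In every case the only genuinely dangerous factor is the unweighted term $y\cdot\nabla\varepsilon$ hidden inside $\Lambda\varepsilon=\frac N2\varepsilon+y\cdot\nabla\varepsilon$, since no weighted norm $\|y\varepsilon\|_2$ is available on the right-hand side of \eqref{flambdaesti}--\eqref{fdefesti} and it appears only with the prefactor $b^2$ in \eqref{Vlambdaesti}--\eqref{Vdefesti}. The recurring device is integration by parts against this derivative, after which the weight $|y|$ is either absorbed by the exponential decay of $Q$ or traded for a gained power of $\lambda$.

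For \eqref{flambdaesti} I would isolate the pure power by writing $f(Q+\varepsilon)-f(Q)=f(\varepsilon)+B$ with $B:=f(Q+\varepsilon)-f(Q)-f(\varepsilon)$. Standard pointwise estimates for the nonlinearity give $|B|\lesssim Q^{\frac4N}|\varepsilon|+Q|\varepsilon|^{\frac4N}$, so every contribution of $B$ carries a factor of $Q$; since $|y|^k Q^{\delta}\in L^\infty$ for $\delta>0$, the weight $|y|$ inside $\Lambda\varepsilon$ is harmless, and Cauchy--Schwarz together with the Sobolev embedding bounds the $B$-part by $\|\varepsilon\|_{H^1}^2$ (using the smallness of $\|\varepsilon\|_{H^1}$ from \eqref{bootstrap}). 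The only term with no available decay is $f(\varepsilon)$ paired with $y\cdot\nabla\varepsilon$; here I would use the exact-derivative identity $\re\bigl[f(\varepsilon)\overline{\Lambda\varepsilon}\bigr]=(N+2)F(\varepsilon)+y\cdot\nabla F(\varepsilon)$. Integrating and moving the divergence onto $g(\lambda\cdot-w)$ leaves $2\int g F(\varepsilon)-\lambda\int\bigl(y\cdot\nabla g(\lambda\cdot-w)\bigr)F(\varepsilon)$, and since $\lambda\,y\cdot\nabla g(\lambda y-w)=x\cdot\nabla g(x)+w\cdot\nabla g(x)$ with $x=\lambda y-w$ is bounded by \eqref{G1} and $|w|\le1$, both terms are $\lesssim\|\varepsilon\|_{2+\frac4N}^{2+\frac4N}\lesssim\|\varepsilon\|_{H^1}^2$. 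Estimate \eqref{fdefesti} is the same computation with $\nabla$ in place of $\Lambda$: there is no $|y|$ weight at all, and the pure-power contribution reduces to $-\lambda\int\nabla g(\lambda\cdot-w)F(\varepsilon)$, which is even smaller.

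For the potential estimates the naive bound $\lambda^2\int|W||\varepsilon|^2\lesssim\lambda^{2-\frac{N}{p_1}}\|\varepsilon\|_{H^1}^2$ is insufficient, because the exponent $2-\frac N{p_1}$ may fall below $1$; this is the main obstacle, and the resolution is to integrate by parts \emph{before} estimating. Using $\re[\varepsilon\overline{\Lambda\varepsilon}]=\frac N2|\varepsilon|^2+\frac12\,y\cdot\nabla|\varepsilon|^2$ and the reality of $W$, the two multiplication terms cancel and one is left with
\[
\lambda^2\bigl(W(\lambda y-w)\varepsilon,\Lambda\varepsilon\bigr)_2=-\frac{\lambda^3}{2}\int_{\mathbb R^N}\bigl(y\cdot(\nabla W)(\lambda y-w)\bigr)|\varepsilon|^2\,dy,
\]
so one extra power of $\lambda$ is gained and $W$ is replaced by $\nabla W$, which is controlled by \eqref{W2}. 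Writing $\nabla W\in L^{p_2}+X(1+|\cdot|)$, the growth part contributes $\lambda^3\int(|y|+\lambda|y|^2+|w||y|)|\varepsilon|^2$, which I would close using $\int|y||\varepsilon|^2\le\|\varepsilon\|_2\|y\varepsilon\|_2$ and the Young inequality $\|\varepsilon\|_2\|y\varepsilon\|_2\le\frac1{2b}\|\varepsilon\|_2^2+\frac b2\|y\varepsilon\|_2^2$; since $\lambda,b\approx s^{-1}$ under \eqref{bootstrap}, each resulting term is $\lesssim s^{-1}(\|\varepsilon\|_{H^1}^2+b^2\|y\varepsilon\|_2^2)$, with room to spare. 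The $L^{p_2}$ part is treated by H\"older using $\|(\nabla W)(\lambda\cdot-w)\|_{p_2}=\lambda^{-\frac N{p_2}}\|\nabla W\|_{p_2}$, and the gain $\lambda^{3-\frac N{p_2}}=\lambda^{2+(1-\frac N{p_2})}$ with $1-\frac N{p_2}\ge\kappa>0$ again leaves ample margin over $s^{-1}$. Finally, \eqref{Vdefesti} is the identical argument with $\re[\varepsilon\overline{\partial_j\varepsilon}]=\frac12\partial_j|\varepsilon|^2$, producing the same $\nabla W$ term but without the $|y|$ weight, hence strictly easier.
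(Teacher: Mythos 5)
Your proposal is correct, and its engine is the same as the paper's: the dangerous factor $y\cdot\nabla\varepsilon$ is never estimated directly but is converted, through the divergence structure of the nonlinearity and integration by parts, into terms where the derivative falls on $g$ (controlled by $x\cdot\nabla g\in L^\infty$ from \eqref{G1}) or on $W$ (gaining one power of $\lambda$ and replacing $W$ by $\nabla W$, controlled by \eqref{W2}). The differences are organizational rather than conceptual. For \eqref{flambdaesti}--\eqref{fdefesti}, the paper keeps $f(Q+\varepsilon)-f(Q)$ intact and rewrites $g\,\re\bigl[(f(Q+\varepsilon)-f(Q))\nabla\overline{\varepsilon}\bigr]$ as $\nabla\bigl(g\,(F(Q+\varepsilon)-F(Q)-dF(Q)(\varepsilon))\bigr)$ minus terms carrying $\nabla g$ or $\nabla Q$, i.e.\ it integrates by parts on the full second-order Taylor remainder of $F$; you instead split off the pure power $f(\varepsilon)$, integrate by parts only on $F(\varepsilon)$ via $\re\bigl[f(\varepsilon)\overline{\Lambda\varepsilon}\bigr]=(N+2)F(\varepsilon)+y\cdot\nabla F(\varepsilon)$, and absorb the cross terms $B$ using the exponential decay of $Q$. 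Both routes then invoke the same identity $\lambda y\cdot\nabla g(\lambda y-w)=(\lambda y-w)\cdot\nabla g(\lambda y-w)+w\cdot\nabla g(\lambda y-w)$ and the same Gagliardo--Nirenberg/smallness bookkeeping, so they are equivalent in content. For \eqref{Vlambdaesti}--\eqref{Vdefesti} your write-up is in fact more self-contained than the paper's, which simply cites \cite{NP}: your cancellation identity
\begin{equation*}
\lambda^2\bigl(W(\lambda y-w)\varepsilon,\Lambda\varepsilon\bigr)_2=-\frac{\lambda^3}{2}\int_{\mathbb{R}^N}\bigl(y\cdot(\nabla W)(\lambda y-w)\bigr)|\varepsilon|^2\,dy
\end{equation*}
is exactly the intended mechanism, and closing the growth part with $\|\varepsilon\|_2\|y\varepsilon\|_2\le\tfrac{1}{2b}\|\varepsilon\|_2^2+\tfrac{b}{2}\|y\varepsilon\|_2^2$ together with $\lambda\approx b\approx s^{-1}$, and the $L^{p_2}$ part by H\"older with the gain $\lambda^{-N/p_2}$ and $1-N/p_2\ge\kappa$, yields the stated bounds with margin.
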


\begin{proof}
For \eqref{Vlambdaesti} and \eqref{Vdefesti}, see \cite{NP}.

Firstly,
\begin{align*}
&\nabla \left(g(\lambda y-w)\left(F(Q+\varepsilon)-F(Q)-dF(Q)(\varepsilon)\right)\right)\\
=&\lambda(\nabla g)(\lambda y-w)\left(F(Q+\varepsilon)-F(Q)-dF(Q)(\varepsilon)\right)+g(\lambda y-w)\re\left(f(Q+\varepsilon)-f(Q)-df(Q)(\varepsilon)\right)\nabla Q\\
&\hspace{20pt}+g(\lambda y-w)\re\left(\left(f(Q+\varepsilon)-f(Q)\right)\nabla\overline{\varepsilon}\right)
\end{align*}
Therefore, we obtain
\begin{align*}
&g(\lambda y-w)\re\left(\left(f(Q+\varepsilon)-f(Q)\right)\Lambda\overline{\varepsilon}\right)\\
=&\frac{N}{2}g(\lambda y-w)\re\left(\left(f(Q+\varepsilon)-f(Q)\right)\overline{\varepsilon}\right)+y\cdot\nabla\left(g(\lambda y-w)\left(F(Q+\varepsilon)-F(Q)-dF(Q)(\varepsilon)\right)\right)\\
&\hspace{20pt}-w\cdot(\nabla g)(\lambda y-w)\left(F(Q+\varepsilon)-F(Q)-dF(Q)(\varepsilon)\right)-(\lambda y-w)\cdot(\nabla g)(\lambda y-w)\left(F(Q+\varepsilon)-F(Q)-dF(Q)(\varepsilon)\right)\\
&\hspace{40pt}+g(\lambda y-w)\re\left(f(Q+\varepsilon)-f(Q)-df(Q)(\varepsilon)\right)y\cdot\nabla Q.
\end{align*}
Therefore, we obtain
\begin{align*}
\left|\left(g(\lambda y-w)\left(f(Q+\varepsilon)-f(Q)\right),\Lambda \varepsilon\right)_2\right|&\lesssim \|\varepsilon\|_{H^1}^2\\
\end{align*}
so that \eqref{flambdaesti} holds. \eqref{fdefesti} is also shown by similar calculations.
\end{proof}

\begin{lemma}[Derivative of $H$ in time]
\label{Hdef}
For all $s\in(s_*,s_1]$, 
\[
\frac{d}{ds}H(s,\varepsilon(s))\geq -b\left(\left(\frac{\epsilon_1}{\epsilon_5}+\epsilon_4\right)\|\varepsilon\|_{H^1}^2+\left(1+\frac{\epsilon_4}{\epsilon_1}+\epsilon_5\right)\epsilon_1b^2\left\|y\varepsilon\right\|_2^2+Cs^{-(2+\kappa)}\right).
\]
\end{lemma}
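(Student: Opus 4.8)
The plan is to differentiate $H(s,\varepsilon(s))$ term by term, separating the explicit $s$-dependence (through $b$, $\lambda$, $w$ in the coefficient $\tfrac{\epsilon_1b^2}{2}$, in $g(\lambda y-w)$ and in $\lambda^2W(\lambda y-w)$) from the implicit dependence through $\varepsilon(s)$. The $\varepsilon$-derivatives of all four terms assemble into the real $L^2$-gradient
\[
\mathcal{A}\varepsilon:=(1-\Delta)\varepsilon-g(\lambda y-w)\left(f(Q+\varepsilon)-f(Q)\right)+\lambda^2W(\lambda y-w)\varepsilon+\epsilon_1b^2|y|^2\varepsilon,
\]
so that $\frac{d}{ds}H=\epsilon_1b\,\partial_sb\,\|y\varepsilon\|_2^2+(\mathrm{ii})+(\mathrm{iii})+\left(\mathcal{A}\varepsilon,\partial_s\varepsilon\right)_2$, where the first term is the explicit derivative of the coefficient $b^2$ and $(\mathrm{ii})$, $(\mathrm{iii})$ are the explicit derivatives of the inhomogeneous and potential integrals. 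Solving \eqref{epsieq} for the combination $(1-\Delta)\varepsilon-g(\lambda y-w)(f(Q+\varepsilon)-f(Q))+\lambda^2W(\lambda y-w)\varepsilon$ and adding back $\epsilon_1b^2|y|^2\varepsilon$ yields $\mathcal{A}\varepsilon=i\partial_s\varepsilon+R$, where $R$ collects $-\Psi$, the defect $-(1-g(\lambda y-w))f(Q)$, the term $+\epsilon_1b^2|y|^2\varepsilon$, and, with a sign, the modulation-weighted terms of \eqref{epsieq} carrying the factors of $\Mod(s)$. Since $\left(i\partial_s\varepsilon,\partial_s\varepsilon\right)_2=0$ and $\left(iR,R\right)_2=0$, writing $\left(\mathcal{A}\varepsilon,\partial_s\varepsilon\right)_2=\left(R,\partial_s\varepsilon\right)_2=\left(R,-i\mathcal{A}\varepsilon+iR\right)_2$ collapses the implicit term to $\im\int_{\mathbb{R}^N}\mathcal{A}\varepsilon\,\overline{R}$, which no longer contains $\partial_s\varepsilon$.

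Next I would isolate the two main contributions. From $\partial_sb+b^2=O(s^{-2L})$ (Estimate \eqref{modesti}), the coefficient term equals $-\epsilon_1b^3\|y\varepsilon\|_2^2=-b\cdot\epsilon_1b^2\|y\varepsilon\|_2^2$ up to an error, and this supplies the sharp coefficient $1$ in front of $\epsilon_1b^2\|y\varepsilon\|_2^2$. In $\im\int\mathcal{A}\varepsilon\,\overline{R}$ the only factor of $R$ that is not already small is $\epsilon_1b^2|y|^2\varepsilon$; pairing it against the leading part $(1-\Delta)\varepsilon$ of $\mathcal{A}\varepsilon$ and integrating by parts, the real quantities $\int|y|^2|\varepsilon|^2$ and $\int|y|^2|\nabla\varepsilon|^2$ contribute nothing to the imaginary part, leaving $2\epsilon_1b^2\im\int\overline{\varepsilon}\,y\cdot\nabla\varepsilon$, bounded by $2\epsilon_1b^2\|y\varepsilon\|_2\|\nabla\varepsilon\|_2$. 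A Young split $2\epsilon_1b^2\|y\varepsilon\|_2\|\nabla\varepsilon\|_2\le b\bigl(\epsilon_5\,\epsilon_1b^2\|y\varepsilon\|_2^2+\tfrac{\epsilon_1}{\epsilon_5}\|\nabla\varepsilon\|_2^2\bigr)$ produces exactly the $\tfrac{\epsilon_1}{\epsilon_5}\|\varepsilon\|_{H^1}^2$ and $\epsilon_5\,\epsilon_1b^2\|y\varepsilon\|_2^2$ pieces of the claimed bound. Here the $\im$ is essential: it annihilates every purely real quadratic (for instance the potential cross term $\epsilon_1\lambda^2b^2\im\int W(\lambda y-w)|y|^2|\varepsilon|^2=0$), so the uncontrolled weight $\||y|^2\varepsilon\|_2$ never appears.

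The explicit derivatives $(\mathrm{ii})$ and $(\mathrm{iii})$ supply the $\epsilon_4$-weighted terms. Using $\partial_s\lambda=-\lambda b+O(\lambda s^{-2L})$, the derivative hitting $g(\lambda y-w)$ produces $\lambda b\int y\cdot(\nabla g)(\lambda y-w)\bigl(F(Q+\varepsilon)-F(Q)-dF(Q)(\varepsilon)\bigr)$; since $|\nabla g(x)|\lesssim|x|^{1+r}$ for $|x|\le1$ by \eqref{G2} and the integrand is $O(|\varepsilon|^2)$, this is $b$ times a small prefactor times $\|\varepsilon\|_{H^1}^2$, while $(\mathrm{iii})=-b\lambda^2\int W(\lambda y-w)|\varepsilon|^2+\cdots$ is handled exactly as in the proof of Lemma \ref{Hcoer}. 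Taking $s_0$ large so that these prefactors drop below $\epsilon_4$ and $\tfrac{\epsilon_4}{\epsilon_1}$ and applying Young's inequality gives the $-b\bigl(\epsilon_4\|\varepsilon\|_{H^1}^2+\tfrac{\epsilon_4}{\epsilon_1}\epsilon_1b^2\|y\varepsilon\|_2^2\bigr)$ contribution. Every remaining term is a genuine error of size $Cb\,s^{-(2+\kappa)}$: the $\Psi$-pairing via Proposition \ref{Psiesti} ($\|\Psi\|_{H^1}\lesssim\lambda^{1+\kappa}(\lambda+|w|)$), the $(1-g)f(Q)$-pairing via \eqref{G2}, and the modulation-weighted pairings via \eqref{modesti} together with \eqref{flambdaesti}--\eqref{Vdefesti} for the $\Lambda\varepsilon$ and $\nabla\varepsilon$ factors, all evaluated with the bootstrap bounds \eqref{bootstrap} and $2L=2+\kappa$. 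Collecting the two main pieces with these errors gives the coefficients $\tfrac{\epsilon_1}{\epsilon_5}+\epsilon_4$ and $1+\tfrac{\epsilon_4}{\epsilon_1}+\epsilon_5$ of the statement.

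The main difficulty is bookkeeping rather than any isolated estimate. One must check that taking the imaginary part genuinely removes every real quadratic, so that only $\|\varepsilon\|_{H^1}$ and $b\|y\varepsilon\|_2$ — never $\||y|^2\varepsilon\|_2$ — survive; and one must track the small constants so that the sharp coefficient $1$ on $\epsilon_1b^2\|y\varepsilon\|_2^2$ is not degraded, the Young split contributing $\epsilon_5$ multiplicatively and the inhomogeneity/potential derivatives contributing $\tfrac{\epsilon_4}{\epsilon_1}$ additively. These constants are kept small precisely so that, in the subsequent analysis of $S=H/\lambda^m$, the gain coming from $\frac{d}{ds}\lambda^{-m}$ dominates this loss. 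Finally, since only a lower bound is required, any term of favourable sign may simply be discarded, which streamlines the sign analysis.
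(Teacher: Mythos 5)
Your proposal is correct and follows essentially the same route as the paper's proof (which itself is an outline deferring to Le Coz--Martel--Rapha\"{e}l): the same splitting $\frac{d}{ds}H=\frac{\partial H}{\partial s}+\left\langle \frac{\partial H}{\partial \varepsilon},\frac{\partial\varepsilon}{\partial s}\right\rangle$, the same substitution of \eqref{epsieq} with the cancellation $\left\langle i\frac{\partial H}{\partial\varepsilon},\frac{\partial H}{\partial\varepsilon}\right\rangle=0$ (your $\im\int\mathcal{A}\varepsilon\,\overline{R}$ is algebraically identical to the paper's pairings \eqref{esti-2}--\eqref{esti-6}), the same two main contributions $-\epsilon_1b^3\|y\varepsilon\|_2^2$ and $2\epsilon_1b^2\|\nabla\varepsilon\|_2\|y\varepsilon\|_2$ with the same $\epsilon_5$-Young split, and the same absorption of the remaining errors into $\epsilon_4 b(\cdot)+Cbs^{-(2+\kappa)}$ via Proposition \ref{Psiesti}, \eqref{modesti}, and Lemma \ref{Lambda}. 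Indeed your write-up makes explicit (e.g.\ the integration by parts yielding $2\epsilon_1b^2\im\int\overline{\varepsilon}\,y\cdot\nabla\varepsilon$ and the role of $\im$ in killing the uncontrolled weight $\||y|^2\varepsilon\|_2$) several points the paper leaves to the cited reference.
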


\begin{proof}
Outline the proofs. See \cite{LMR} for details.

Firstly, we have
\[
\frac{d}{ds}H(s,\varepsilon(s))=\frac{\partial H}{\partial s}(s,\varepsilon(s))+\left\langle i\frac{\partial H}{\partial \varepsilon},i\frac{\partial \varepsilon}{\partial s}\right\rangle.
\]
Secondly, we have
\begin{align*}
\frac{\partial H}{\partial \varepsilon}&=-\Delta \varepsilon+\varepsilon+\epsilon_1b^2|y|^2\varepsilon-g(\lambda y-w)(f(Q+\varepsilon)-f(Q))+\lambda^2 W(\lambda y-w)\varepsilon\\
&=L_+\re\varepsilon+iL_-\im\varepsilon+\epsilon_1b^2|y|^2\varepsilon-\left(g(\lambda y-w)-1\right)df(Q)(\varepsilon)\\
&\hspace{20pt}-g(\lambda y-w)(f(Q+\varepsilon)-f(Q)-df(Q)(\varepsilon))+\lambda^2 W(\lambda y-w)\varepsilon,\\
i\frac{\partial \varepsilon}{\partial s}&=\frac{\partial H}{\partial \varepsilon}-\epsilon_1b^2|y|^2\varepsilon-(g(\lambda y-w)-1)f(Q)+\ModOp(Q+\varepsilon)+\Psi,
\end{align*}
where
\[
\ModOp v:=i\left(\frac{1}{\lambda}\frac{\partial \lambda}{\partial s}+b\right)\Lambda v-\left(1-\frac{\partial \gamma}{\partial s}\right)v-\left(\frac{\partial b}{\partial s}+b^2\right)\frac{|y|^2}{4}v+\left(\frac{1}{\lambda}\frac{\partial \lambda}{\partial s}+b\right)b\frac{|y|^2}{2}v-i\frac{1}{\lambda}\frac{\partial w}{\partial s}\cdot\nabla v-\frac{1}{2}\frac{b}{\lambda}\frac{\partial w}{\partial s}\cdot yv.
\]

For $\frac{\partial H}{\partial s}$, we have
\begin{align*}
\frac{\partial H}{\partial s}&=\epsilon_1b\frac{\partial b}{\partial s}\|y\varepsilon\|_2^2-\int_{\mathbb{R}^N}\left(\frac{\partial \lambda}{\partial s}y-\frac{\partial w}{\partial s}\right)\cdot(\nabla g)(\lambda y-w)\left(F(Q+\varepsilon)-F(Q)-dF(Q)(\varepsilon)\right)dy\\
&\hspace{20pt}+\lambda^2\frac{1}{\lambda}\frac{\partial\lambda}{\partial s}\int_{\mathbb{R}^N}W(\lambda y-w)|\varepsilon|^2dy+\frac{1}{2}\lambda^2\int_{\mathbb{R}^N}\left(\frac{\partial \lambda}{\partial s}y-\frac{\partial w}{\partial s}\right)\cdot(\nabla W)(\lambda y-w)|\varepsilon|^2dy.
\end{align*}
Therefore, we obtain
\begin{align}
\label{esti-1}
\frac{\partial H}{\partial s}\geq -\epsilon_1b^3\|y\varepsilon\|_2^2+o\left(b\left(\|\varepsilon\|_{H^1}+b^2\|y\varepsilon\|_2^2\right)\right).
\end{align}

For $\left\langle i\frac{\partial H}{\partial \varepsilon},i\frac{\partial \varepsilon}{\partial s}\right\rangle$, the following estimates hold:
\begin{align}
\label{esti-2}
\left|\left\langle i\frac{\partial H}{\partial \varepsilon},\epsilon_1b^2|y|^2\varepsilon\right\rangle\right|&\leq 2\epsilon_1b^2\|\varepsilon\|_{H^1}\|y\varepsilon\|_2+o(b\|\varepsilon\|_{H^1}^2),\\
\label{esti-3}
\left|\left\langle i\frac{\partial H}{\partial \varepsilon},(g(\lambda y-w)-1)f(Q)\right\rangle\right|&\lesssim s^{-(2+\kappa+L)},\\
\label{esti-4}
\left|\left\langle i\frac{\partial H}{\partial \varepsilon},\Psi\right\rangle\right|&\lesssim s^{-(2+\kappa+L)},\\
\label{esti-5}
\left|\left\langle i\frac{\partial H}{\partial \varepsilon},\ModOp Q\right\rangle\right|&\lesssim s^{-(4L-1)},\\
\label{esti-6}
\left|\left\langle i\frac{\partial H}{\partial \varepsilon},\ModOp \varepsilon\right\rangle\right|&\lesssim s^{-(4L-1)}.
\end{align}

Combining inequalities \eqref{esti-1}, \eqref{esti-2}, \eqref{esti-3}, \eqref{esti-4}, \eqref{esti-5}, and \eqref{esti-6}, we obtain
\begin{align*}
&\frac{d}{ds}H(s,\varepsilon(s))=\frac{\partial H}{\partial s}(s,\varepsilon(s))+\left\langle i\frac{\partial H}{\partial \varepsilon},i\frac{\partial \varepsilon}{\partial s}\right\rangle\\
\geq&-\epsilon_1b^3\|y\varepsilon\|_2^2+o\left(b\left(\|\varepsilon\|_{H^1}^2+b^2\|y\varepsilon\|_2^2\right)\right)-2\epsilon_1b^2\|\varepsilon\|_{H^1}\|y\varepsilon\|_2+o(b\|\varepsilon\|_{H^1}^2)-C\left(s^{-(2+\kappa+L)}+s^{-(4L-1)}\right)\\
\geq&-\epsilon_1b^3\|y\varepsilon\|_2^2-2\epsilon_1b^2\|\varepsilon\|_{H^1}\|y\varepsilon\|_2-\epsilon_4b\left(\|\varepsilon\|_{H^1}^2+b^2\|y\varepsilon\|_2^2\right)-bC\left(s^{-(2+\frac{3\kappa}{2})}+s^{-(2+\kappa)}\right)\\
\geq&-b\left(\left(\frac{\epsilon_1}{\epsilon_5}+\epsilon_4\right)\|\varepsilon\|_{H^1}^2+\left(1+\frac{\epsilon_4}{\epsilon_1}+\epsilon_5\right)\epsilon_1b^2\left\|y\varepsilon\right\|_2^2+Cs^{-(2+\kappa)}\right).
\end{align*}
\end{proof}

\begin{lemma}[Derivative of $S$ in time]
\label{Sdef}
For all $s\in(s_*,s_1]$,
\[
\frac{d}{ds}S(s,\varepsilon(s))\gtrsim \frac{b}{\lambda^m}\left(\|\varepsilon\|_{H^1}^2+b^2\left\|y\varepsilon\right\|_2^2-Cs^{-(2+\kappa)}\right).
\]
\end{lemma}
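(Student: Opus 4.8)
The plan is to differentiate $S=\lambda^{-m}H$ and extract a single dominant positive term. Set $M:=\frac1\lambda\frac{\partial\lambda}{\partial s}+b$, the first component of $\text{Mod}(s)$, so that $|M|\lesssim s^{-2L}=s^{-(2+\kappa)}$ by \eqref{modesti} and $\frac1\lambda\frac{\partial\lambda}{\partial s}=-b+M$. The product rule then gives
\[
\frac{d}{ds}S(s,\varepsilon(s))=-m\lambda^{-m}\left(\frac1\lambda\frac{\partial\lambda}{\partial s}\right)H+\frac1{\lambda^m}\frac{d}{ds}H=m\frac{b}{\lambda^m}H-m\frac{M}{\lambda^m}H+\frac1{\lambda^m}\frac{d}{ds}H.
\]
Throughout $(s_*,s_1]$ the bootstrap \eqref{bootstrap} keeps $b\approx b_{\app}\approx s^{-1}>0$, so the first term is a strictly positive multiple of $H$, and by the coercivity of $H$ it controls $\|\varepsilon\|_{H^1}^2+b^2\|y\varepsilon\|_2^2$ from below. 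This is the mechanism that makes the whole derivative positive.

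Abbreviating $A:=\|\varepsilon\|_{H^1}^2$ and $B:=b^2\|y\varepsilon\|_2^2$, I would bound the first term below using Lemma \ref{Hcoer} (which gives $H\ge(\frac\mu2-\epsilon_3)A+(\frac{\epsilon_1}2-\epsilon_3)B$, a quantity that is itself nonnegative for the small $\epsilon_3$ in \eqref{epsidef}) and the last term below using Lemma \ref{Hdef}. Substituting both into the decomposition and collecting the $\frac{b}{\lambda^m}$–proportional contributions yields
\[
\frac{b}{\lambda^m}\left[\left(m\left(\frac\mu2-\epsilon_3\right)-\frac{\epsilon_1}{\epsilon_5}-\epsilon_4\right)A+\left(m\left(\frac{\epsilon_1}2-\epsilon_3\right)-\left(1+\frac{\epsilon_4}{\epsilon_1}+\epsilon_5\right)\epsilon_1\right)B\right]-C\frac{b}{\lambda^m}s^{-(2+\kappa)},
\]
together with the leftover cross term $-m\lambda^{-m}MH$.

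The crux of the proof, and essentially its only real obstacle, is to verify that both bracketed coefficients are bounded below by a positive constant for the specific choices in \eqref{epsidef}. For the coefficient of $A$, using $\epsilon_1/\epsilon_5=m\mu/4$ together with $m\epsilon_3,\epsilon_4\le m\mu/24$, one finds it is at least $\frac{m\mu}{4}-\frac{m\mu}{24}-\frac{m\mu}{24}=\frac{m\mu}{6}>0$. For the coefficient of $B$, the choice $m=2+\frac\kappa2$ makes $\frac m2-1-\epsilon_5=\frac\kappa8$, so the coefficient equals $\frac{\kappa^2m\mu}{256}-m\epsilon_3-\epsilon_4$, and using $m\epsilon_3,\epsilon_4\le\frac{\kappa^2m\mu}{24\cdot64}$ this is at least $\frac{\kappa^2m\mu}{4\cdot64}-\frac{\kappa^2m\mu}{24\cdot64}-\frac{\kappa^2m\mu}{24\cdot64}=\frac{\kappa^2m\mu}{384}>0$. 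These two inequalities are exactly what the delicately tuned constants $m,\epsilon_1,\epsilon_3,\epsilon_4,\epsilon_5$ are engineered to produce, so I would present this algebraic bookkeeping carefully while noting that it is routine once the constants are in hand.

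It remains to dispose of the two error terms. The contribution $-C\frac{b}{\lambda^m}s^{-(2+\kappa)}$ is precisely the admissible error in the statement. For the cross term, Corollary \ref{Sesti} gives $|H|\lesssim A+B$, and since $|M|\lesssim s^{-(2+\kappa)}$ while $b\approx s^{-1}$, we have $|m\lambda^{-m}MH|\lesssim\lambda^{-m}s^{-(2+\kappa)}(A+B)=o\!\left(\frac{b}{\lambda^m}(A+B)\right)$, which is absorbed into the strictly positive $A$- and $B$-terms once $s_0$ is taken large enough. Combining these bounds gives $\frac{d}{ds}S(s,\varepsilon(s))\gtrsim\frac{b}{\lambda^m}\bigl(\|\varepsilon\|_{H^1}^2+b^2\|y\varepsilon\|_2^2-Cs^{-(2+\kappa)}\bigr)$, as claimed.
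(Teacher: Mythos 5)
Your proposal is correct and follows essentially the same route as the paper: differentiate $S=\lambda^{-m}H$, split off $\frac{1}{\lambda}\frac{\partial\lambda}{\partial s}=-b+\mathrm{Mod}_1$, bound the main term below via Lemma \ref{Hcoer}, the derivative term via Lemma \ref{Hdef}, the cross term via \eqref{modesti}, and then verify positivity of the two collected coefficients from \eqref{epsidef}. The only (cosmetic) difference is that the paper absorbs the cross term into an extra $\epsilon_4$-contribution, yielding the constants $\frac{m\mu}{8}$ and $\frac{\kappa}{16}$ instead of your $\frac{m\mu}{6}$ and $\frac{\kappa^2 m\mu}{384}$; both verifications are valid.
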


\begin{proof}
According to Lemma \ref{Hcoer}, Lemma \ref{Hdef}, and \eqref{modesti}, we have
\begin{align*}
\frac{d}{ds}S(s,\varepsilon(s))&=m\frac{b}{\lambda^m}H(s,\varepsilon(s))-m\frac{1}{\lambda^m}\left(\frac{1}{\lambda}\frac{\partial \lambda}{\partial s}+b\right)H(s,\varepsilon(s))+\frac{1}{\lambda^m}\frac{d}{ds}H(s,\varepsilon(s))\\
&\geq \frac{b}{\lambda^m}\left(\left(\frac{m\mu}{2}-\frac{\epsilon_1}{\epsilon_5}-\epsilon_2m-2\epsilon_3\right)\|\varepsilon\|_{H^1}^2+\epsilon_1\left(\frac{m}{2}-1-\frac{\epsilon_3m}{\epsilon_1}-\frac{2\epsilon_4}{\epsilon_1}-\epsilon_4\right)b^2\left\|y\varepsilon\right\|_2^2-Cs^{-(2+\kappa)}\right)
\end{align*}
From \eqref{epsidef},
\begin{align*}
\frac{m\mu}{2}-\frac{\epsilon_1}{\epsilon_5}-\epsilon_3m-2\epsilon_4&\geq\frac{m\mu}{2}-\frac{m\mu}{4}-\frac{m\mu}{24}-\frac{m\mu}{12}=\frac{m\mu}{8}\\
\frac{m}{2}-1-\frac{\epsilon_3m}{\epsilon_1}-\frac{2\epsilon_4}{\epsilon_1}-\epsilon_5&\geq \frac{\kappa}{4}-\frac{\kappa}{24\times 2}-\frac{\kappa}{24}-\frac{\kappa}{8}=\frac{\kappa}{16}
\end{align*}
hold.
\end{proof}

\section{Bootstrap}
\label{sec:bootstrap}
In this section, we establish the estimates of the decomposition parameters by using a bootstrap argument and the estimates obtained in Section \ref{sec:MEF}.

\begin{lemma}
\label{rebootstrap}
There exists a sufficiently small $\epsilon_3>0$ such that for all $s\in(s_*,s_1]$, 
\begin{align}
\label{reepsiesti}
\left\|\varepsilon(s)\right\|_{H^1}^2+b(s)^2\left\|y\varepsilon(s)\right\|_2^2&\lesssim s^{-\left(2L+\frac{\kappa}{2}\right)},\\
\label{relamesti}
\left|\frac{\lambda(s)}{\lambda_{\app}(s)}-1\right|+\left|\frac{b(s)}{b_{\app}(s)}-1\right|&\lesssim s^{-2(L-1)},\\
\label{rewesti}
\left|w(s)\right|&\lesssim s^{-(2L-1)}.
\end{align}
\end{lemma}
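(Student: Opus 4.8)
The plan is to run a continuity (bootstrap) argument on the interval $(s_*,s_1]$: assuming that \eqref{bootstrap} holds on $[\sigma,s_1]$, I would show that each of \eqref{reepsiesti}, \eqref{relamesti}, \eqref{rewesti} holds with a power of $s$ strictly smaller (by a factor $s^{-\kappa/2}$) than the corresponding threshold in \eqref{bootstrap}. Since all constants will be uniform and the data at $s=s_1$ is exact — $\varepsilon(s_1)=0$, $w(s_1)=0$, $\lambda(s_1)=\lambda_1$, $b(s_1)=b_1$ — this rules out $s_*>s'$ by continuity and closes the bootstrap once $s_0$ is large. Every estimate is obtained by integrating backwards from $s_1$.

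The bound \eqref{reepsiesti} on $\varepsilon$ I would get from the monotonicity of the modified energy. Integrating Lemma \ref{Sdef} from $s$ to $s_1$ and using $S(s_1,\varepsilon(s_1))=0$, the favourable term $\tfrac{b}{\lambda^m}(\|\varepsilon\|_{H^1}^2+b^2\|y\varepsilon\|_2^2)$ has a definite sign and may be dropped, leaving $S(s,\varepsilon(s))\lesssim\int_s^{s_1}\tfrac{b}{\lambda^m}\sigma^{-(2+\frac{3\kappa}{2})}\,d\sigma$, where I retain the sharp source $s^{-(2+\frac{3\kappa}{2})}$ (equivalently the $s^{-(2+\kappa+L)}$ contributions of \eqref{esti-3}--\eqref{esti-4}) rather than the coarser $s^{-(2+\kappa)}$ recorded in Lemma \ref{Sdef}. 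Since $\tfrac{b}{\lambda^m}\sigma^{-(2+\frac{3\kappa}{2})}\approx\sigma^{-1-\kappa}$ is integrable and dominated by its value at the lower endpoint $s$, this gives $S\lesssim s^{-\kappa}$, and Corollary \ref{Sesti} converts it into $\|\varepsilon\|_{H^1}^2+b^2\|y\varepsilon\|_2^2\lesssim\lambda^m s^{-\kappa}\approx s^{-(2L+\frac{\kappa}{2})}$. The translation estimate \eqref{rewesti} is the simplest: from \eqref{modesti} one has $|\partial_s w|\le|\Mod(s)|\lesssim s^{-2L}$, so integrating between $s$ and $s_1$ with $w(s_1)=0$ yields $|w(s)|\lesssim\int_s^{s_1}\sigma^{-2L}\,d\sigma\lesssim s^{-(2L-1)}$.

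The delicate estimate is \eqref{relamesti}, and here the guiding idea is to replace the differentiation-and-integration of the modulation law (which forced the $C^{1,1}_{\loc}$ hypothesis in \cite{NP}) by the conservation of energy. Expanding $E(u)\equiv E_0$ around the profile $Q_{\lambda,b,w,\gamma}$, identifying the leading term $\tfrac{b^2}{\lambda^2}\|yQ\|_2^2$ through Proposition \ref{profileprop}, and absorbing the $\varepsilon$-corrections via \eqref{reepsiesti} together with the inhomogeneity/potential errors controlled by Proposition \ref{Psiesti} and the bound on $w$, I would obtain the \emph{pointwise algebraic} identity $\tfrac{b^2}{\lambda^2}=\tfrac{8E_0}{\|yQ\|_2^2}\bigl(1+O(s^{-\kappa})\bigr)$. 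This pins down the ratio $b/\lambda$, hence $(b/b_{\app})\big/(\lambda/\lambda_{\app})$, to precision $s^{-\kappa}$ without any integration. It then remains to fix the common scale: I would insert this ratio into the scaling law $\tfrac1\lambda\partial_s\lambda+b=O(s^{-2L})$ from \eqref{modesti} and pass to the physical time via $\tfrac{dt}{ds}=\lambda^2$, so that the forcing of the resulting equation for $\lambda$ is weighted by $\lambda^2\approx s^{-2}$ and becomes integrable near the blow-up time $t\nearrow0$; comparing with $\lambda_{\app},b_{\app}$ and using $\lambda(s_1)=\lambda_1=\lambda_{\app}(s_1)$ should give $|\lambda/\lambda_{\app}-1|\lesssim s^{-\kappa}$, whence $|b/b_{\app}-1|\lesssim s^{-\kappa}$ from the energy ratio.

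The main obstacle, and the point where energy is indispensable, is precisely this control of the \emph{absolute} scale. The modulation relations \eqref{modesti} alone only propagate the ratio $b/\lambda$; any attempt to recover $\lambda/\lambda_{\app}$ by integrating the Riccati-type law $\partial_s b+b^2=O(s^{-2L})$ directly in the rescaled variable produces a forcing of size $s^{-\kappa}$ after the natural integrating factor, whose integral over the long interval $[s,s_1]$ is dominated by the far endpoint $s_1$ and therefore does not close uniformly as $s_1\to\infty$. The role of energy conservation is to remove this by supplying the ratio as an algebraic constraint, reducing the determination of the scale to a single first-order law integrated against the $\lambda^2$-weight near $t=0$, where the error is genuinely integrable; verifying that the $\varepsilon$- and potential-induced contributions to the energy identity do not reintroduce a non-integrable, sign-definite drift is the technical heart of the argument, and is where the weakened regularity/integrability hypotheses \eqref{G1}, \eqref{W2} on $g$ and $W$ must be used with care.
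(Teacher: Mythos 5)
Your proofs of \eqref{reepsiesti} and \eqref{rewesti} are correct and coincide with the paper's (the paper cites \cite{LMR} for the former and integrates \eqref{modesti} with $w(s_1)=0$ for the latter). Your remark that one must keep the sharp sources $s^{-(2+\kappa+L)}$, $s^{-(4L-1)}$ of \eqref{esti-3}--\eqref{esti-6} is right and is not cosmetic: with the coarser $s^{-(2+\kappa)}$ recorded in the statement of Lemma \ref{Sdef}, the integration and Corollary \ref{Sesti} return only $\|\varepsilon\|_{H^1}^2+b^2\|y\varepsilon\|_2^2\lesssim\lambda^m s^{-\kappa/2}\approx s^{-2L}$, which does not improve the bootstrap hypothesis \eqref{bootstrap} at all. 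For the first half of \eqref{relamesti}, your route also works but is genuinely different from the paper's: you expand the conserved quantity $E(u)\equiv E_0$ around $Q_{\lambda,b,w,\gamma}$ (note this requires the orthogonality conditions \eqref{orthocondi} together with \eqref{ortho} to dispose of the terms linear in $\varepsilon$; you should say so), whereas the paper never expands $E(u)$: it integrates the bound of Proposition \ref{profileprop} on $\frac{d}{ds}E(Q_{\lambda,b,w,\gamma})$ backwards from $s_1$, where $E(Q_{\lambda_1,b_1,0,0})=E_0$ holds by the choice of $b_1$. Both yield $\left|b^2\|yQ\|_2^2-8E_0\lambda^2\right|\lesssim s^{-(2+\kappa)}$.

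The gap is in your final step, where ``should give'' conceals exactly the difficulty you yourself diagnosed. Your physical-time integration is sound: with $c:=\sqrt{8E_0}/\|yQ\|_2$ it gives $\tilde\lambda(t)=c|t|\left(1+O(|t|^{\kappa})\right)$, since $\int_t^{t_1}|\tau|^{\kappa}d\tau\lesssim|t|^{1+\kappa}$ is dominated by the near endpoint. But \eqref{relamesti} compares $\lambda(s)$ with $\lambda_{\app}(s)=\sqrt{\mathcal{C}}\,s^{-1}$, and the translation needs the dictionary $|t(s)|=\mathcal{C}s^{-1}\left(1+O(s^{-\kappa})\right)$, which you do not have. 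The bootstrap hypothesis gives only $|t(s)|=|t_1|+\int_s^{s_1}\lambda^2\,d\sigma=\mathcal{C}s^{-1}\left(1+O(s^{-\kappa/2})\right)$, so your estimate converts into $\left|\lambda/\lambda_{\app}-1\right|\lesssim s^{-\kappa/2}$ --- the hypothesis back again, with no gain, so the bootstrap does not close. Feeding your own conclusion into $\frac{ds}{dt}=\lambda^{-2}=\mathcal{C}t^{-2}\left(1+O(|t|^{\kappa})\right)$ does not help either: $s(t)-\mathcal{C}|t|^{-1}$ then drifts by $\int_t^{t_1}O(|\tau|^{\kappa-2})\,d\tau\approx|t_1|^{\kappa-1}\approx s_1^{1-\kappa}$, an error dominated by the far endpoint $s_1$, whereas \eqref{relamesti} requires it to be $O(s^{1-\kappa})$ uniformly in $s_1$. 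So the $t\leftrightarrow s$ conversion fails for precisely the reason you rejected the direct $s$-integration.

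For fairness you should know that this obstruction also afflicts the paper's own proof. The paper bounds the forcing by $\frac{1}{\lambda}\left(\left|\frac{1}{\lambda}\frac{\partial\lambda}{\partial s}+b\right|+\left|b\|yQ\|_2-\sqrt{8E_0}\lambda\right|\right)\lesssim s^{-(2L-1)}+s^{-(1+\kappa)}$ and integrates; but since $b\|yQ\|_2+\sqrt{8E_0}\lambda\approx s^{-1}$, the relation $\left|b^2\|yQ\|_2^2-8E_0\lambda^2\right|\lesssim s^{-(2+\kappa)}$ yields only $\left|b\|yQ\|_2-\sqrt{8E_0}\lambda\right|\lesssim s^{-(1+\kappa)}$, hence a forcing of size $s^{-\kappa}$ rather than $s^{-(1+\kappa)}$, and $\int_s^{s_1}\sigma^{-\kappa}d\sigma$ is again dominated by $s_1$. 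In other words, what your argument correctly proves is the physical-time statement $\tilde\lambda(t)=c|t|(1+O(|t|^{\kappa}))$, $\tilde b(t)=c^2|t|(1+O(|t|^{\kappa}))$ --- the form actually consumed by Lemma \ref{uniesti} and Theorem \ref{theorem:EMBS} --- while neither your argument nor the paper's computation, as written, delivers \eqref{relamesti} for $\lambda_{\app}(s)=\sqrt{\mathcal{C}}s^{-1}$. A self-contained repair would formulate the bootstrap and this lemma directly in terms of $\tilde\lambda(t)/(c|t|)$ and $\tilde b(t)/(c^2|t|)$, which is what your physical-time argument closes.
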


\begin{proof}
See \cite{LMR} for the proof of \eqref{reepsiesti}.

From Proposition \ref{profileprop} and \eqref{modesti},
\[
\left|E(Q_{\lambda,b,w,\gamma})-E_0\right|\leq\int_s^{s_1}\left|\left.\frac{d}{d\sigma}\right|_{\sigma=\tau}E(Q_{\lambda,b,w,\gamma}(\sigma))\right|d\tau\lesssim \int_s^{s_1}\tau^{-(1+\kappa)}d\tau\lesssim s^{-\kappa}
\]
holds. Therefore, since
\begin{align*}
\left|b^2\|yQ\|_2^2-8\lambda^2E_0\right|&\leq \lambda^2\left(\left|\frac{b^2}{\lambda^2}\|yQ\|_2^2-8E(P_{\lambda,b,\gamma})\right|+8\left|E_0-E(P_{\lambda,b,\gamma})\right|\right)\\
&\lesssim s^{-(2+\kappa)},
\end{align*}
we obtain
\begin{align*}
\left|\frac{\partial}{\partial s}\left(\sqrt{\frac{\|yQ\|_2^2}{8E_0}}\frac{1}{\lambda}-s\right)\right|&\leq\left|-\sqrt{\frac{\|yQ\|_2^2}{8E_0}}\frac{1}{\lambda^2}\frac{\partial \lambda}{\partial s}-1\right|\\
&\lesssim\frac{1}{\lambda}\left(\left|\frac{1}{\lambda}\frac{\partial\lambda}{\partial s}+b\right|+\left|b\|yQ\|-\sqrt{8E_0}\lambda\right|\right)\\
&\lesssim s^{-(2L-1)}+s^{-(1+\kappa)}.
\end{align*}
Since $\sqrt{\frac{\|yQ\|_2^2}{8E_0}}\frac{1}{\lambda(s_1)}=s_1$, we obtain
\[
\left|\sqrt{\frac{\|yQ\|_2^2}{8E_0}}\frac{1}{\lambda}-s\right|\lesssim s^{-2(L-1)},\quad\text{i.e., }\left|\frac{\lambda_{\text{app}}(s)}{\lambda(s)}-1\right|\lesssim s^{-(2L-1)}.
\]

Next, since
\[
\left|b^2-{b_{\text{app}}}^2\right|=\left|b^2-\frac{8E_0}{\|yQ\|_2^2}{\lambda_{\text{app}}}^2\right|\lesssim \left|b^2-\frac{8E_0}{\|yQ\|_2^2}{\lambda}^2\right|+\left|\lambda^2-{\lambda_{\text{app}}}^2\right|\lesssim s^{-(2+\kappa)}+s^{-2L},
\]
we obtain \eqref{relamesti}.

Finally, we prove $(\ref{rewesti})$. Since
\[
\left|w(s)\right|\leq \int_s^{s_1}|\text{Mod}(\sigma)|d\sigma\lesssim s^{-(2L-1)},
\]
$(\ref{rewesti})$ holds.
\end{proof}

From Lemma \ref{rebootstrap}, we obtain the following corollary:

\begin{corollary}
\label{reesti}
If $s_0$ is sufficiently large, then $s_*=s'=s_0$ for any $s_1>s_0$.
\end{corollary}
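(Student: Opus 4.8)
**Corollary 5.2 (s_* = s' = s_0)**

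The plan is to run a standard continuity/bootstrap closure argument: I want to show that the bootstrap estimates \eqref{bootstrap} defining $s_*$ are in fact \emph{strictly improved} on $(s_*,s_1]$ by the conclusions of Lemma \ref{rebootstrap}, so that $s_*$ cannot be an interior point of $(s',s_1]$ forced by saturation of \eqref{bootstrap}. The only other way $s_*$ could fail to reach $s_0$ is if the decomposition itself breaks down, i.e. $s_*=\inf J_{s_1}>s_0$; I will rule that out using the strict improvement together with the decomposition Lemma \ref{decomposition}.

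First I would compare the exponents in Lemma \ref{rebootstrap} against those in \eqref{bootstrap}. Recall $L=1+\tfrac{\kappa}{2}$, so $2L=2+\kappa$ and the bootstrap requires $\|\varepsilon\|_{H^1}^2+b^2\|y\varepsilon\|_2^2<s^{-2L}$, while \eqref{reepsiesti} gives $\lesssim s^{-(2L+\kappa/2)}$; since $2L+\tfrac{\kappa}{2}>2L$, for $s_0$ large enough the implied constant is beaten by the extra power $s^{-\kappa/2}\leq s_0^{-\kappa/2}$, yielding the \emph{strict} inequality. Likewise $2L-1=1+\kappa$ and $2(L-1)=\kappa$, so \eqref{relamesti} gives $\lesssim s^{-\kappa}$ against the required $s^{-\kappa/2}$ for the $\lambda,b$ ratios, again a gain of $s^{-\kappa/2}$; and \eqref{rewesti} gives $|w|\lesssim s^{-(1+\kappa)}$ against the required $s^{-(1+\kappa/2)}$, a gain of $s^{-\kappa/2}$. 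In each case the margin is a fixed positive power of $s$, so choosing $s_0$ large makes every bootstrap bound hold with strict inequality on all of $(s_*,s_1]$, hence by continuity on the closed endpoint as well.

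Next I would invoke the continuity in $s$ of the quantities $\|\varepsilon(s)\|_{H^1}$, $b(s)$, $\|y\varepsilon(s)\|_2$, $\lambda(s)/\lambda_{\app}(s)$, $b(s)/b_{\app}(s)$, and $w(s)$, which follows from the $C^1$-regularity of the modulation parameters in Lemma \ref{decomposition}. Suppose for contradiction that $s_*>s'$. Then by definition of $s_*$ as an infimum, at least one of the inequalities in \eqref{bootstrap} must degenerate to an equality at $s=s_*$ (a saturation point). But the strict improvement just established shows every inequality in \eqref{bootstrap} holds strictly at $s_*$, contradicting saturation. Therefore $s_*=s'=\max\{s_0,\inf J_{s_1}\}$. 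It remains to exclude $\inf J_{s_1}>s_0$: since the strictly improved estimates keep $\lambda^{N/2}u(\lambda y-w)e^{i\gamma}$ within $\delta$ of $Q$ in $H^1$ uniformly on $(s_*,s_1]$, the hypothesis of Lemma \ref{decomposition} remains satisfied up to $s_0$, so the decomposition extends and $\inf J_{s_1}\leq s_0$, forcing $s'=s_0$ and hence $s_*=s_0$.

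The main obstacle is the endpoint/continuity bookkeeping: one must be careful that $s_*$ is defined as an infimum of $\sigma$ for which \eqref{bootstrap} holds on $[\sigma,s_1]$, so the logical structure is that saturation can only occur at $s_*$ itself, and strict improvement there is exactly what excludes it. The estimates of Lemma \ref{rebootstrap} were proved \emph{on} $(s_*,s_1]$ assuming \eqref{bootstrap}, so the argument is genuinely a bootstrap rather than circular: the assumed bounds are used to derive strictly better bounds, and the gap of $s^{-\kappa/2}$ — uniform in $s_1$ once $s_0$ is fixed large — is precisely what guarantees the conclusion holds for \emph{every} $s_1>s_0$, which is the content of the corollary.
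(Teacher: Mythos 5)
Your proof is correct and is essentially the paper's own (implicit) argument: the paper deduces Corollary \ref{reesti} directly from Lemma \ref{rebootstrap}, and the missing details are exactly the strict-improvement-plus-continuity closure you spell out — each bound in \eqref{bootstrap} is improved by a uniform factor $s^{-\kappa/2}$ (since $2L+\tfrac{\kappa}{2}>2L$, $2(L-1)=\kappa>\tfrac{\kappa}{2}$, and $2L-1=1+\kappa>1+\tfrac{\kappa}{2}$), so for $s_0$ large no inequality can saturate at $s_*$, and the same improved bounds let Lemma \ref{decomposition} extend the decomposition, ruling out $\inf J_{s_1}>s_0$. Your exponent bookkeeping and the non-circularity remark are both accurate, so there is nothing to add.
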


Finally, we rewrite the estimates obtained for the time variable $s$ in Lemma \ref{rebootstrap} into an estimates for the time variable $t$.

\begin{lemma}[Interval]
\label{interval}
If $s_0$ is sufficiently large, then there exists $t_0<0$ such that 
\[
[t_0,t_1]\subset {s_{t_1}}^{-1}([s_0,s_1]),\quad \left|\mathcal{C}s_{t_1}(t)^{-1}-|t|\right|\lesssim |t|^{1+\kappa}\ (t\in [t_0,t_1])
\]
hold for $t_1\in(t_0,0)$, where $\mathcal{C}=\frac{\|yQ\|_2^2}{8E_0}$.
\end{lemma}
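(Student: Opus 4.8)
The plan is to integrate the defining relation for the rescaled time and to compare the result with the reference profile $-\mathcal{C}/s$. Throughout, fix $s_0$ large; by Corollary \ref{reesti} we then have $s_*=s'=s_0$ for every $s_1>s_0$, so the refined estimates of Lemma \ref{rebootstrap} hold on all of $[s_0,s_1]$ with constants independent of $s_1$. In particular, since $\lambda_{\app}(s)=\sqrt{\mathcal{C}}\,s^{-1}$ and $2(L-1)=\kappa$, estimate \eqref{relamesti} gives $\left|\lambda(s)/\lambda_{\app}(s)-1\right|\lesssim s^{-\kappa}$, whence
\[
\left|\lambda(s)^2-\frac{\mathcal{C}}{s^2}\right|\lesssim s^{-(2+\kappa)}\qquad(s\in[s_0,s_1]).
\]

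Next I would use that $t\mapsto s_{t_1}(t)$ is an increasing $C^1$-diffeomorphism onto its image with $\frac{d}{dt}s_{t_1}(t)=\tilde\lambda_{t_1}(t)^{-2}$, so that the inverse $\tau:=s_{t_1}^{-1}$ satisfies $\frac{d\tau}{ds}=\lambda(s)^2$ and $\tau(s_1)=t_1$. Using the normalisation $t_1=-\mathcal{C}/s_1$ (equivalently $s_1=\mathcal{C}|t_1|^{-1}$) and integrating the displayed bound from $s$ to $s_1$,
\[
\tau(s)=t_1-\int_s^{s_1}\lambda(\sigma)^2\,d\sigma=-\frac{\mathcal{C}}{s_1}-\mathcal{C}\left(\frac{1}{s}-\frac{1}{s_1}\right)+O\!\left(s^{-(1+\kappa)}\right)=-\frac{\mathcal{C}}{s}+O\!\left(s^{-(1+\kappa)}\right),
\]
the error being uniform in $s_1$. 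Since $\tau(s)<0$, writing $t=\tau(s)$ and $s=s_{t_1}(t)$ this reads $\left|\mathcal{C}\,s_{t_1}(t)^{-1}-|t|\right|\lesssim s^{-(1+\kappa)}$.

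To finish the second assertion I would convert the $s$-error into a $t$-error. From the last bound and $s\ge s_0$ large one gets $\mathcal{C}/s\le |t|+\tfrac{1}{2}\mathcal{C}/s$, hence $s^{-1}\le 2|t|/\mathcal{C}$; therefore $s^{-(1+\kappa)}\lesssim |t|^{1+\kappa}$, which is the claimed estimate. For the inclusion $[t_0,t_1]\subset s_{t_1}^{-1}([s_0,s_1])$, monotonicity of $s_{t_1}$ gives $s_{t_1}^{-1}([s_0,s_1])=[\tau(s_0),t_1]$ with $\tau(s_0)=-\mathcal{C}/s_0+O(s_0^{-(1+\kappa)})$; choosing $t_0:=-\mathcal{C}/s_0+C s_0^{-(1+\kappa)}$, with $C$ the uniform constant above, guarantees $t_0\ge\tau(s_0)$ and $t_0<0$, so that $[t_0,t_1]\subset[\tau(s_0),t_1]$ for every admissible $t_1=-\mathcal{C}/s_1\in(t_0,0)$.

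The only genuine difficulty is uniformity in $s_1$: a single $t_0$ must serve for all initial times $t_1$ approaching $0$, which forces every implied constant --- in the bootstrap, in the integration, and in the choice of $t_0$ --- to be independent of $s_1$. This is precisely the content of Corollary \ref{reesti}, which closes the bootstrap on $[s_0,s_1]$ with constants depending only on $s_0$, $E_0$, and the fixed data $Q,g,W$; the remaining steps are an elementary ODE comparison together with the algebra above.
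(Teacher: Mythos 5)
Your proof is correct. The paper itself gives no inline argument for this lemma --- it simply cites \cite{NP} --- and your proof is essentially the argument used there: integrate $\frac{d\tau}{ds}=\lambda(s)^2$ against the bootstrap asymptotics $\lambda(s)^2=\mathcal{C}s^{-2}+O(s^{-(2+\kappa)})$ (uniform in $s_1$ by Corollary \ref{reesti} and \eqref{relamesti}, using $2(L-1)=\kappa$), then convert the $s^{-(1+\kappa)}$ error into a $|t|^{1+\kappa}$ error and choose $t_0=-\mathcal{C}/s_0+Cs_0^{-(1+\kappa)}$. One point worth noting: your argument hinges on the normalisation $t_1=-\mathcal{C}/s_1$ (so that the $\mathcal{C}/s_1$ terms cancel in the integration); this relation is only implicit in the paper's Section \ref{sec:uniesti} and made explicit in Section \ref{sec:proof}, and you were right to flag and use it.
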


\begin{proof}
See \cite{NP} for the proof.
\end{proof}

\begin{lemma}[Conversion of estimates]
\label{uniesti}
For $t\in[t_0,t_1]$, 
\begin{align*}
&\tilde{\lambda}_{t_1}(t)=\sqrt{\frac{8E_0}{\|yQ\|_2^2}}|t|\left(1+\epsilon_{\tilde{\lambda},t_1}(t)\right),\quad \tilde{b}_{t_1}(t)=\frac{8E_0}{\|yQ\|_2^2}|t|\left(1+\epsilon_{\tilde{b},t_1}(t)\right),\quad\left|\tilde{w}_{t_1}(t)\right|\lesssim |t|^{2L-1},\\
&\|\tilde{\varepsilon}_{t_1}(t)\|_{H^1}\lesssim |t|^{L+\frac{\kappa}{4}},\quad \|y\tilde{\varepsilon}_{t_1}(t)\|_{2}\lesssim |t|^{L+\frac{\kappa}{4}-1}
\end{align*}
holds. Furthermore,
\[
\sup_{t_1\in[t,0)}\left|\epsilon_{\tilde{\lambda},t_1}(t)\right|\lesssim |t|^\kappa,\quad \sup_{t_1\in[t,0)}\left|\epsilon_{\tilde{b},t_1}(t)\right|\lesssim |t|^\kappa.
\]
\end{lemma}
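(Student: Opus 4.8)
The plan is to read Lemma \ref{uniesti} directly off the $s$-variable estimates of Lemma \ref{rebootstrap} via the change of variables $s=s_{t_1}(t)$, using Corollary \ref{reesti} to guarantee those estimates are available on the full range and Lemma \ref{interval} as the dictionary converting powers of $s$ into powers of $|t|$. By Corollary \ref{reesti} we have $s_*=s'=s_0$ uniformly in $t_1$, so \eqref{reepsiesti}, \eqref{relamesti}, and \eqref{rewesti} hold on all of $(s_0,s_1]$ with constants independent of $t_1$. By Lemma \ref{interval}, for $t\in[t_0,t_1]$ one has $s:=s_{t_1}(t)\in[s_0,s_1]$ and
\[
\frac{\mathcal{C}}{s}=\mathcal{C}s_{t_1}(t)^{-1}=|t|\bigl(1+O(|t|^{\kappa})\bigr),\qquad \mathcal{C}=\frac{\|yQ\|_2^2}{8E_0},
\]
again uniformly in $t_1$. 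Since $\tilde\lambda_{t_1}(t)=\lambda_{t_1}(s)$, $\tilde b_{t_1}(t)=b_{t_1}(s)$, $\tilde w_{t_1}(t)=w_{t_1}(s)$, and $\tilde\varepsilon_{t_1}(t)=\varepsilon_{t_1}(s)$ are the same modulation parameters re-indexed through this correspondence, each bound below follows by substituting $s^{-1}=\frac{|t|}{\mathcal{C}}\bigl(1+O(|t|^{\kappa})\bigr)$.

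For the scaling and the pseudo-conformal parameter I would first recall that the approximate profiles satisfy $\lambda_{\app}(s)=\sqrt{\mathcal{C}}\,s^{-1}$ and $b_{\app}(s)=s^{-1}$, so that $\sqrt{\mathcal{C}}\,s^{-1}=\frac{|t|}{\sqrt{\mathcal{C}}}\bigl(1+O(|t|^{\kappa})\bigr)=\sqrt{\tfrac{8E_0}{\|yQ\|_2^2}}\,|t|\bigl(1+O(|t|^{\kappa})\bigr)$ and $s^{-1}=\frac{8E_0}{\|yQ\|_2^2}|t|\bigl(1+O(|t|^{\kappa})\bigr)$. Feeding these into \eqref{relamesti}, which gives $\lambda_{t_1}(s)=\lambda_{\app}(s)\bigl(1+O(s^{-2(L-1)})\bigr)$ and $b_{t_1}(s)=b_{\app}(s)\bigl(1+O(s^{-2(L-1)})\bigr)$ with $2(L-1)=\kappa$, the bootstrap errors are $O(s^{-\kappa})=O(|t|^{\kappa})$, matching the error produced by the change of variables; multiplying the factors, both the scaling and the $b$-error are $O(|t|^{\kappa})$. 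This yields the two asymptotics with $\epsilon_{\tilde\lambda,t_1}(t),\epsilon_{\tilde b,t_1}(t)=O(|t|^{\kappa})$.

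For the remaining quantities the argument is a plain substitution of power bounds. From \eqref{rewesti}, $|\tilde w_{t_1}(t)|=|w_{t_1}(s)|\lesssim s^{-(2L-1)}\lesssim|t|^{2L-1}$. From \eqref{reepsiesti}, $\|\tilde\varepsilon_{t_1}(t)\|_{H^1}\lesssim s^{-(L+\kappa/4)}\lesssim|t|^{L+\kappa/4}$, and, using $b_{t_1}(s)\approx s^{-1}$ to trade the weight for a factor of $s$, $\|y\tilde\varepsilon_{t_1}(t)\|_2\lesssim s^{-(L+\kappa/4-1)}\lesssim|t|^{L+\kappa/4-1}$. In each conversion the exponent of $s$ becomes the same exponent of $|t|$, up to the harmless factor $\bigl(1+O(|t|^{\kappa})\bigr)$ coming from $\mathcal{C}/s=|t|\bigl(1+O(|t|^{\kappa})\bigr)$.

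The one point demanding care is the uniformity in $t_1$, which is precisely what the supremum bounds record. All implicit constants above originate in Lemma \ref{rebootstrap}, Corollary \ref{reesti}, and Lemma \ref{interval}, and each of these is uniform in $t_1$: in particular $s_*=s_0$ and the lower endpoint $t_0$ do not depend on $t_1$. Hence, for fixed $t$ and every $t_1\in[t,0)$ (so that $t\in[t_0,t_1]$), the bound $|\epsilon_{\tilde\lambda,t_1}(t)|+|\epsilon_{\tilde b,t_1}(t)|\lesssim|t|^{\kappa}$ holds with a $t_1$-independent constant, and taking the supremum over $t_1$ gives the final display. I expect this uniform control, rather than any single estimate, to be the crux, since it is what later permits passing to the limit $t_1\nearrow 0$; the residual work is the bookkeeping of exponents, where the only thing to verify is that the change-of-variables error $|t|^{\kappa}$ dominates or matches each bootstrap error $s^{-2(L-1)}\lesssim|t|^{\kappa}$ and $s^{-(2L-1)}\lesssim|t|^{1+\kappa}$.
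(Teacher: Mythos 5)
Your proposal is correct and is exactly the argument the paper intends: the paper states Lemma~\ref{uniesti} without proof, treating it as the routine consequence of Lemma~\ref{rebootstrap} (available on all of $(s_0,s_1]$ by Corollary~\ref{reesti}, uniformly in $t_1$) combined with the time-conversion estimate of Lemma~\ref{interval}, which is precisely your substitution $s^{-1}=\mathcal{C}^{-1}|t|\left(1+O(|t|^{\kappa})\right)$ with $2(L-1)=\kappa$. Your exponent bookkeeping and the uniformity-in-$t_1$ discussion are both accurate.
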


\section{Proof of Theorem \ref{theorem:EMBS}}
\label{sec:proof}
In this section, we prove Theorem \ref{theorem:EMBS}.

\begin{proof}[Proof of Theorem \ref{theorem:EMBS}]
Let $(t_n)_{n\in\mathbb{N}}\subset(t_0,0)$ be a monotonically increasing sequence such that $\lim_{n\nearrow \infty}t_n=0$. For each $n\in\mathbb{N}$, let $u_n$ be the solution for (NLS) with an initial value
\begin{align*}
u_n(t_n,x):=\frac{1}{{\lambda_{1,n}}^\frac{N}{2}}Q\left(\frac{x}{\lambda_{1,n}}\right)e^{-i\frac{b_{1,n}}{4}\frac{|x|^2}{{\lambda_{1,n}}^2}}
\end{align*}
at $t_n$, where
\[
s_n:=-\frac{\|yQ\|_2^2}{8E_0}{t_n}^{-1},\quad \lambda_n:=\sqrt{\frac{\|yQ\|_2^2}{8E_0}}{s_n}^{-1},\quad E(Q_{\lambda_n,b_n,0,0})=E_0.
\]

According to Lemma \ref{decomposition} with an initial value $\tilde{\gamma}_n(t_n)=0$, there exists a decomposition
\[
u_n(t,x)=\frac{1}{\tilde{\lambda}_n(t)^{\frac{N}{2}}}\left(Q+\tilde{\varepsilon}_n\right)\left(t,\frac{x+\tilde{w}_n(t)}{\tilde{\lambda}_n(t)}\right)e^{-i\frac{\tilde{b}_n(t)}{4}\frac{|x+\tilde{w}_n(t)|^2}{\tilde{\lambda}_n(t)^2}+i\tilde{\gamma}_n(t)}
\]
on $[t_0,t_n]$. Up to a subsequence, there exists $u_\infty(t_0)\in \Sigma^1$ such that
\[
u_n(t_0)\rightharpoonup u_\infty(t_0)\quad \text{weakly in}\ \Sigma^1,\quad u_n(t_0)\rightarrow u_\infty(t_0)\quad \text{in}\ L^2(\mathbb{R}^N)\quad (n\rightarrow\infty).
\]

Moreover, since $u_n:[t_0,0)\to\Sigma^1$ is locally uniformly bounded,
\[
u_n\rightarrow u_\infty\quad \text{in}\ C([t_0,T'],L^2(\mathbb{R}^N)),\quad u_n(t)\rightharpoonup u_\infty(t)\ \text{in}\ \Sigma^1 \quad (n\rightarrow\infty)
\]
holds (see \cite{NP}). Particularly, we have $\|u_\infty(t)\|_2=\|Q\|_2$.

Based on weak convergence in $H^1(\mathbb{R}^N)$ and Lemma \ref{decomposition}, we decompose $u_\infty$ to
\[
u_\infty(t,x)=\frac{1}{\tilde{\lambda}_\infty(t)^{\frac{N}{2}}}\left(Q+\tilde{\varepsilon}_\infty\right)\left(t,\frac{x+\tilde{w}_\infty(t)}{\tilde{\lambda}_\infty(t)}\right)e^{-i\frac{\tilde{b}_\infty(t)}{4}\frac{|x+\tilde{w}_\infty(t)|^2}{\tilde{\lambda}_\infty(t)^2}+i\tilde{\gamma}_\infty(t)}
\]
on $[t_0,0)$. Furthermore, as $n\rightarrow\infty$, 
\[
\tilde{\lambda}_n(t)\rightarrow\tilde{\lambda}_\infty(t),\quad \tilde{b}_n(t)\rightarrow \tilde{b}_\infty(t),\quad \tilde{w}_n(t)\rightarrow\tilde{w}_\infty(t),\quad e^{i\tilde{\gamma}_n(t)}\rightarrow e^{i\tilde{\gamma}_\infty(t)},\quad\tilde{\varepsilon}_n(t)\rightharpoonup \tilde{\varepsilon}_\infty(t)\quad \text{in}\ \Sigma^1
\]
holds for any $t\in[t_0,0)$. Therefore, we have
\begin{align*}
&\tilde{\lambda}_{\infty}(t)=\sqrt{\frac{8E_0}{\|yQ\|_2^2}}\left|t\right|(1+\epsilon_{\tilde{\lambda},0}(t)),\quad \tilde{b}_{\infty}(t)=\frac{8E_0}{\|yQ\|_2^2}\left|t\right|(1+\epsilon_{\tilde{b},0}(t)),\quad \left|\tilde{w}_\infty(t)\right|\lesssim |t|^{2L-1}\\
&\|\tilde{\varepsilon}_{\infty}(t)\|_{H^1}\lesssim \left|t\right|^{L+\frac{\kappa}{4}},\quad \|y\tilde{\varepsilon}_{\infty}(t)\|_2\lesssim \left|t\right|^{L+\frac{\kappa}{4}-1},\quad \left|\epsilon_{\tilde{\lambda},0}(t)\right|\lesssim |t|^\kappa,\quad \left|\epsilon_{\tilde{b},0}(t)\right|\lesssim |t|^\kappa
\end{align*}
from a uniform estimate of Lemma \ref{uniesti}. Consequently, we obtain Theorem \ref{theorem:EMBS}.

Finally, check the energy. Since $E'(w)=-\Delta w-g(x)|w|^\frac{4}{N}+Ww$, we obtain
\begin{align*}
E\left(u_n\right)-E\left(Q_{\tilde{\lambda}_n,\tilde{b}_n,\tilde{w}_n,\tilde{\gamma}_n}\right)=o_{t\nearrow0}(1),\quad E\left(u_\infty\right)-E\left(P_{\tilde{\lambda}_\infty,\tilde{b}_\infty,\tilde{w}_\infty,\tilde{\gamma}_\infty}\right)=o_{t\nearrow0}(1),
\end{align*}
where $o_{t\nearrow0}(1)$ is uniform with respect to $n$. From continuity of energy,
\[
\lim_{n\rightarrow \infty}E\left(Q_{\tilde{\lambda}_n,\tilde{b}_n,\tilde{w}_n,\tilde{\gamma}_n}\right)=E\left(P_{\tilde{\lambda}_\infty,\tilde{b}_\infty,\tilde{w}_\infty,\tilde{\gamma}_\infty}\right)
\]
holds and from conservation of energy,
\[
E\left(u_n\right)=E\left(u_n(t_n)\right)=E\left(P_{\tilde{\lambda}_{1,n},\tilde{b}_{1,n},0,0}\right)=E_0
\]
holds. Therefore, we obtain
\[
E\left(u_\infty\right)=E_0+o_{t\nearrow0}(1),
\]
so that $E\left(u_\infty\right)=E_0$.
\end{proof}

\section*{Acknowledgement}
The author would like to thank Masahito Ohta and Noriyoshi Fukaya for their support in writing this paper.


\begin{thebibliography}{99}
\bibitem{BCD} V. Banica, R. Carles, and T. Duyckaerts.: Minimal blow-up solutions to the mass-critical inhomogeneous NLS equation. \textit{Comm. Partial Differential Equations} 36 (2011), no. 3, 487--531.
\bibitem{BLGS} H. Berestycki and P.-L. Lions.: Nonlinear scalar field equations. I. Existence of a ground state. \textit{Arch. Rational Mech. Anal.} 82 (1983), no. 4, 313--345.
\bibitem{C} R. Carles.: Nonlinear Schr\"{o}dinger equations with repulsive harmonic potential and applications. \textit{SIAM J. Math. Anal.} 35 (2003), no. 4, 823--843.
\bibitem{CN} R. Carles and Y. Nakamura.: Nonlinear Schr\"{o}dinger equations with Stark potential. \textit{Hokkaido Math. J.} 33 (2004), no. 3, 
\bibitem{CSSE} T. Cazenave.: \textit{Semilinear Schr\"{o}dinger equations}. Courant Lecture Notes in Mathematics, 10. New York University, Courant Institute of Mathematical Sciences, New York; American Mathematical Society, Providence, RI, 2003.
\bibitem{CHSEE} T. Cazenave and A. Haraux.: \textit{An introduction to semilinear evolution equations}. Oxford Lecture Series in Mathematics and its Applications, 13. The Clarendon Press, Oxford University Press, New York, 1998.
\bibitem{KGS} M. K. Kwong.: Uniqueness of positive solutions of $\Delta u-u+u^p=0$ in $\mathbb{R}^n$. \textit{Arch. Rational Mech. Anal.} 105 (1989), no. 3, 243--266.
\bibitem{LMR} S. Le Coz, Y. Martel, and P. Rapha\"{e}l.: Minimal mass blow up solutions for a double power nonlinear Schr\"{o}dinger equation. \textit{Rev. Mat. Iberoam.} 32 (2016), no. 3, 795--833.
\bibitem{NP} N. Matsui. Minimal mass blow-up solutions for nonlinear Schr\"{o}dinger equations with a potential, in press, arXiv preprint \url{https://arxiv.org/abs/2007.15968}
\bibitem{MMMB} F. Merle.: Determination of blow-up solutions with minimal mass for nonlinear Schr\"{o}dinger equations with critical power. \textit{Duke Math. J.} 69 (1993), no. 2, 427--454.
\bibitem{MNE} F. Merle.: Nonexistence of minimal blow-up solutions of equations $iu_t=-\Delta u-k(x)|u|^{4/N}u$ in $\mathbb{R}^N$. \textit{Ann. Inst. H. Poincar\'{e} Phys. Th\'{e}or.} 64 (1996), no. 1, 33–85.
\bibitem{MRO} F. Merle and P. Raphael.: On universality of blow-up profile for $L^2$ critical nonlinear Schr\"{o}dinger equation. \textit{Invent. Math.} 156 (2004), no. 3, 565--672.
\bibitem{MRUPB} F. Merle and P. Raphael.: The blow-up dynamic and upper bound on the blow-up rate for critical nonlinear Schr\"{o}dinger equation. \textit{Ann. of Math.} (2) 161 (2005), no. 1, 157--222.
\bibitem{MRUDB} F. Merle and P. Raphael.: On a sharp lower bound on the blow-up rate for the $L^2$ critical nonlinear Schr\"{o}dinger equation. \textit{J. Amer. Math. Soc.} 19 (2006), no. 1, 37--90.
\bibitem{RSEU} P. Rapha\"{e}l and J. Szeftel.: Existence and uniqueness of minimal blow-up solutions to an inhomogeneous mass critical NLS. \textit{J. Amer. Math. Soc.} 24 (2011), no. 2, 471-546.
\bibitem{WGS} M. Weinstein.: Nonlinear Schr\"{o}dinger equations and sharp interpolation estimates. \textit{Comm. Math. Phys.} 87 (1982/83), no. 4, 567--576.
\bibitem{WL} M. Weinstein.: Lyapunov stability of ground states of nonlinear dispersive evolution equations. \textit{Comm. Pure Appl. Math.} 39 (1986), no. 1, 51--67. 719-729.
\end{thebibliography}
\end{document}